\newcommand{\bx}{{\mathbf x}}
\newcommand{\by}{{\mathbf y}}
\begin{document}

\title{Unconditionally stable exponential time differencing schemes
for the mass-conserving Allen-Cahn equation with nonlocal and local effects
}

\author[1,5]{Kun Jiang}

\author[2]{Lili Ju*}

\author[3]{Jingwei Li}

\author[4]{Xiao Li}

\authormark{K. Jiang \textsc{et al}}

\address[1]{\orgdiv{School of Mathematics and Statistics}, \orgname{Qilu University of Technology}, \orgaddress{\state{Jinan}, \country{China}}}

\address[2]{\orgdiv{Department of Mathematics}, \orgname{University of South Carolina}, \orgaddress{\state{SC}, \country{USA}}}

\address[3]{\orgdiv{Laboratory of Mathematics and Complex Systems and School of Mathematical Science}, \orgname{Beijing Normal University}, \orgaddress{\state{Beijing}, \country{China}}}

\address[4]{\orgdiv{Department of Applied Mathematics}, \orgname{The Hong Kong Polytechnic University}, \orgaddress{\state{Kowloon}, \country{Hong Kong}}}

\address[5]{\orgdiv{Faculty of Science}, \orgname{Beijing University of Technology}, \orgaddress{\state{Beijing}, \country{China}}}

\corres{*Lili Ju, Department of Mathematics, University of South Carolina, Columbia, SC 29208, USA. \email{ju@math.sc.edu}}


\abstract[Abstract]{It is well known that the classic Allen-Cahn equation satisfies the maximum bound principle (MBP), that is, the absolute value of its solution is uniformly bounded for all time by certain constant under suitable initial and boundary conditions. In this paper, we consider numerical solutions of the modified Allen-Cahn equation with a Lagrange multiplier of nonlocal and local effects, which not only shares the same MBP as the original Allen-Cahn equation but also conserves the mass exactly. We reformulate the model equation with a linear stabilizing technique, then construct first- and second-order  exponential time differencing schemes for its time integration. We  prove the unconditional MBP preservation and mass conservation of  the proposed schemes in the time discrete sense  and  derive their error estimates under some regularity assumptions. Various numerical experiments in two and three dimensions are also conducted to verify the theoretical results.}

\keywords{Allen-Cahn equation, mass-conserving, maximum bound principle, exponential time differencing, linear stabilization}

\jnlcitation{\cname{%
\author{Jiang K},
\author{Ju L},
\author{Li J}, and
\author{Li X}}
(\cyear{2021}),
\ctitle{Unconditionally stable exponential time differencing schemes
for the mass-conserving Allen-Cahn equation with nonlocal and local effects},
\cjournal{Numer Methods Partial Differential Eq.}, \cvol{2021;00:1--6}.}

\maketitle


\section{Introduction}\label{sec1}

The classic Allen-Cahn equation takes the following form:
\begin{align}\label{ac}
 \partial_tu(\mathbf{x},t)=\varepsilon^2\Delta u(\mathbf{x},t)+f(u(\mathbf{x},t)), \qquad \mathbf{x}\in \Omega,\ t>0,
\end{align}
where $u(\mathbf{x},t)$ is the real-valued unknown function,
$\Omega\subset \mathbb{R}^d$ ($d=2,3$) is an open, connected and bounded domain
with the Lipschitz continuous boundary $\partial\Omega$,
$\varepsilon$ is an interfacial parameter and $f(u)=-F'(u)$ with $F(u)$ being certain nonlinear potential function.
The classic Allen-Cahn equation can be regarded as the $L^2$ gradient flow with respect to the energy functional
\begin{align}\label{1212}
E[u(\bx,t)]:=\int_{\Omega}\left(\frac{\varepsilon^{2}}{2}|\nabla u(\bx,t)|^{2}+ F(u(\bx,t))\right)\,{\rm d}\mathbf{x},
\end{align}
and its solution satisfies the energy dissipation law as follows:
\begin{align}\label{dissp}
\frac{d}{dt}E[u(\bx,t)]\leq 0.
\end{align}
The Allen-Cahn equation was originally introduced by Allen and Cahn \cite{AC1}
as a model for the phase separation process of a binary alloy under a fixed temperature. Since then
the Allen-Cahn equation has  been  intensively studied
due to its connection to the celebrated curvature driven geometric flow.
In the past few decades, many works on the Allen-Cahn equation  have been devoted to motions of interfaces, especially, motion by mean curvature, and numerous applications ranging from image processing \cite{mcm1, mcm2}, material sciences \cite{AC1} to biology \cite{mcm4}.

Many parabolic  types of equations often satisfy an important property, that is,
the solution must reach its maximum and/or minimum either at the initial time or on the boundary of the domain,
which is the well-studied maximum principle \cite{pde}.
The Allen-Cahn equation \eqref{ac}  satisfies a similar property, called the maximum bound principle (MBP) \cite{SAM1,mc3}:
if the initial data and/or the boundary values are pointwise bounded by a certain constant in absolute value,
then the absolute value of the solution is also bounded by the same constant everywhere and for all time. For example, when
the double-well potential $F(u)=\frac14(u^2-1)^2$ (and $f(u)=-F'(u)=u-u^3$) is used, the constant bounding the solution is 1,  i.e.,
$\|u(\cdot,t)\|\leq 1$ for all $t\geq 0$ if $\|u(\cdot,0)\|\leq 1$, where $\|\cdot\|$ denotes the supremum norm.
The MBP is weaker than the conventional maximum principle in the sense that
a problem satisfying a maximum principle must satisfy an MBP.
The equation \eqref{ac} with a uniformly elliptic linear operator ${L}$ replacing $\varepsilon^2\Delta$ and $f=0$ satisfies the maximum principle.
There have also been many studies devoted to maximum principle preserving numerical approximations of linear elliptic operators,
such as finite difference method \cite{mbp1,mbp2}, lumped-mass finite element method \cite{mbp4,mbp3},
collocation  method \cite{mbp5,mbp6}, and finite volume method\cite{mbp7}.
For the equation \eqref{ac} with a uniformly elliptic linear operator,
the nonlinear term $f(u)$ leads to the existence of time-invariant regions\cite{mc3}, in which
the MBP was proved  as a special invariant region of the Allen-Cahn equation.
Recently, a variety of works have been done on
whether such an MBP could be preserved by some time-stepping schemes for discretizing the Allen-Cahn equation.
The discrete MBPs of a finite difference semi-discrete scheme and its fully discrete approximations
with forward and backward Euler time-stepping methods
were obtained in one-dimensional space \cite{mbp10}.
Moreover, the first-order stabilized implicit-explicit schemes with finite difference spatial discretization
were proved to preserve the MBP \cite{mbp12},
which was then generalized \cite{mbp11} to the case with more general nonlinear terms.

The Cahn-Hilliard equation,
a fourth-order equation governed by the same energy functional \eqref{1212},
satisfies the so-called mass conservation while the Allen-Cahn equation fails to satisfy this property.
One can modify the Allen-Cahn equation to satisfy the  mass conservation by adding an extra Lagrange term of nonlocal constraint as \cite{eq1}
\begin{eqnarray}\label{mac2}
\partial_tu(\bx,t)=\varepsilon^2\Delta u(\bx,t)+f(u(\bx,t))-\frac{1}{|\Omega|}\int_\Omega f(u(\by,t))\,{\rm d}\by,
\qquad \mathbf{x}\in \Omega,\ t>0.
\end{eqnarray}
Integrating both sides of the equation \eqref{mac2} over $\Omega$,
we can see that the modified Allen-Cahn equation \eqref{mac2} conserves the total mass exactly:
\begin{eqnarray*}\label{mass}
\frac{d}{dt}\int_\Omega u(\mathbf{x},t)\,{\rm d}\mathbf{x}=0,
\end{eqnarray*}
or equivalently, $\int_\Omega u(\mathbf{x},t)\,{\rm d}\mathbf{x}\equiv \int_\Omega u(\mathbf{x},0)\,{\rm d}\mathbf{x}$.
In addition, the solution to the modified Allen-Cahn equation \eqref{mac2}  also satisfies the same energy dissipation laws\cite{eq1}
\eqref{dissp}
as the classic Allen-Cahn equation \eqref{ac}.
However, a drawback of such modification is that the value of the solution to \eqref{mac2} may fall beyond the interval $[-1,1]$ even for the commonly used double-well potential case \cite{LJCF20,eq1}.

Apart from
the equation \eqref{mac2}, for the Allen-Cahn equation with the double well potential, another well-known  modification is to impose a Lagrange multiplier\cite{MAC2,1} as follows:
\begin{align}\label{MAC31}
\partial_tu(\bx,t)=\varepsilon^2\Delta u(\bx,t)+f(u(\bx,t))-\frac{\int_\Omega f(u(\by,t))\,{\rm d}\by}{\int_\Omega \sqrt{4F(u(\by,t))} \,{\rm d}\by}\sqrt{4F(u(\bx,t))},\qquad \mathbf{x}\in \Omega, \ t>0,
\end{align}
where $f(u)=u-u^3$ and $F(u) =\frac14 (1-u^2)^2$.
 It is easy to show that the total mass is also exactly conserved for \eqref{MAC31}. Furthermore,
the conservation of mass is ensured by the nonlocal effect of the Lagrange multiplier $-\frac{1}{|\Omega|}\int_\Omega f(u)\,{\rm d}\by$ in \eqref{mac2},
whereas the Lagrange multiplier  in \eqref{MAC31} combines both nonlocal and local effects.
Alfaro and Alifrangis  have proven that
the solution to \eqref{MAC31} satisfies the same MBP\cite{MAC3} as that for the classic Allen-Cahn equation \eqref{ac}, that is, $\|u(\cdot,t)\|\leq 1$ for all $t\geq 0$ if $\|u(\cdot,0)\|\leq 1$. However, the  dissipation law with respect to the original energy functional \eqref{1212} does not hold theoretically for the equation \eqref{MAC31}; instead, the equation \eqref{MAC31} is the $L^2$
gradient flow with respect to a slightly different energy functional modified from \eqref{1212} \cite{1}.

There have been quite a few researches denoted to numerical schemes for the mass-conserving Allen Cahn equations \eqref{mac2}.
Kim et al. \cite{MCH1} proposed a practically unconditionally stable hybrid scheme
with an exact mass-conserving update at each time step.
Zhai et al. \cite{MCH2, MCH3} proposed the Crank-Nicolson and operator splitting schemes.
Lee \cite{MCH4} discretized the equation
by using a Fourier spectral method in space and first-, second-, and third-order implicit explicit Runge-Kutta schemes in time.

Recently, the exponential time differencing (ETD)  (or say, the exponential integrator)
has been considered for constructing unconditionally MBP-preserving schemes for the classic Allen-Cahn equation.
The ETD method comes from the variation-of-constants formula with the nonlinear terms approximated by polynomial interpolations,
followed by exact integration of the resulting integrals.
The ETD schemes have been systematically studied \cite{etd6}
and further developed by Cox and Matthews for the applications to stiff systems \cite{etd51}.
Hochbruck and Ostermann provided several nice reviews on  ETD Runge-Kutta method \cite{etd7}
and ETD multistep method \cite{etd8} for semilinear parabolic problems and the convergence of these methods were analyzed.
Du and Zhu \cite{etd9,etd3} investigated the linear stabilities of some ETD and modified ETD schemes
for the Allen-Cahn equation in two- and three-dimensional spaces.
One advantage of the ETD schemes is the exact evaluation of the linear part
so that they possess good stability and accuracy even though the linear terms have strong stiffness.
Thus,  ETD schemes have been successfully applied to phase-field models
which often yield highly stiff ODE systems under suitable spatial discretization.
Some high-order numerical methods based on fast and stable  ETD schemes were developed for solving the Allen-Cahn equation\cite{ju1}, the Cahn-Hilliard equation \cite{ju2},  the elastic bending energy model \cite{ju3}, and the no-slope-selection thin film equation \cite{wangc1,wangc2}.
A localized compact ETD method was firstly presented  \cite{ju4} for   time integration
with large step sizes for phase-field simulations of  coarsening dynamics on the Sunway TaihuLight supercomputer.
In addition, MBP-preserving numerical schemes have been also studied for the fractional Allen-Cahn equation
with the Crank-Nicolson time-stepping \cite{etd4}, the nonlocal Allen-Cahn equation by using first- and second-order ETD schemes  \cite{etd5}, and the conservative Allen-Cahn equation \eqref{mac2} using the ETD schemes \cite{LJCF20}. In a very recent work, an abstract framework was established \cite{SAM1} for analyzing the MBPs of semilinear parabolic equations and unconditionally MBP-preserving ETD schemes, and it was claimed that the classic ETD methods with order higher than $2$ cannot preserve the MBP unconditionally.  Several third- and fourth-order MBP-preserving schemes were developed for the Allen-Cahn equation by considering the integrating factor Runge-Kutta  schemes
\cite{lixiao1,li3,zhangh1}. An arbitrarily high-order ETD multistep method was presented in \cite{zhouz1} by enforcing the maximum bound via an extra cutoff postprocessing.

In this paper, we are interested in  developing stable linear  schemes for solving  the mass-conserving Allen-Cahn equation \eqref{MAC31}
based on the ETD approach. The rest of the paper is organized as follows.
In Section 2, we first reformulate the model equation \eqref{MAC31} based on the linear stabilizing technique,
and then propose first- and second-order ETD schemes for time integration of  the transformed equation, which are  shown to be unconditionally mass-conserved and MBP-preserving in the time discrete sense.  In Section 3, we prove the convergence of the proposed ETD schemes  under certain regularity assumptions. Various numerical experiments in two and three dimensions are performed  in Section 4 to validate the theoretical results.
Finally, some concluding remarks are drawn in Section 5.

\section{Unconditionally MBP-preserving exponential time differencing schemes}

Let us restate the mass-conserving Allen-Cahn equation with local and nonlocal effects as follows:
\begin{align}\label{MAC3}
\partial_tu(\bx,t)=\varepsilon^2\Delta u(\bx,t)+\bar{f}[u](\bx,t),\qquad \mathbf{x}\in \Omega, \ t>0,
\end{align}
with
\begin{align}\label{nonlocal-func}
\bar{f}[u](\bx,t)=f(u(\bx,t))-\frac{\int_\Omega f(u(\by,t))\,{\rm d}\by}{\int_\Omega g(u(\by,t))\,{\rm d}\by}g(u(\bx,t)).
\end{align}
where  $f(u)=u-u^3$ and  $g(u) = \sqrt{4F(u)} = 1-u^2$ (the notation of absolute value is dropped off since $1-u^2 \geq 0$   due to the MBP in the time-space continuous setting),
 subject to the initial value condition
\begin{align}
u(\mathbf{x},0)=u_0(\mathbf{x}),\qquad \mathbf{x}\in \overline{\Omega},
\end{align}
for some $u_0\in C(\overline\Omega)$  with $\overline{\Omega}=\Omega \cup \partial \Omega$.
We impose either the periodic boundary condition (such as a regular rectangular domain $\Omega=\prod\limits_{i=1}^d(a_i,b_i)$)
or homogeneous Neumann boundary condition given by
\begin{align*}
\frac{\partial u(\bx,t)}{\partial \mathbf{n}}=0,\quad  \mathbf{x}\in\partial\Omega, \ t\geq0,
\end{align*}
where $\mathbf{n}$ is the outer unit normal vector on the boundary $\partial\Omega$.
 Integrating both sides of the equation \eqref{MAC3} over $\Omega$, it is easy to verify
its mass-conserving property:
$$\int_{\Omega}u(\bx,t)\,{\rm d}\bx = M_0:=\int_{\Omega}u_0(\bx)\,{\rm d}\bx,\quad t\geq 0.$$
The nonlinear functions $f$ and $g$ are continuously differentiable and
\begin{equation}\label{bceq1}
f(-1)=f(1)=0, \quad g(-1)=g(1)=0.
\end{equation}
The MBP property with the bounding constant $1$ then becomes a result of the invariant set for the equation  \eqref{MAC3} \cite{MAC3}.
In addition, the two constant functions $u(\cdot,t)  \equiv 1$ or $u(\cdot,t) \equiv -1$ are clearly trivial solutions
to the equation \eqref{MAC3}. Hence we always assume that $\|u_0\|\leq 1$ and $|M_0|\ne |\Omega|$ (i.e., $u_0\nequiv \pm 1$) to avoid the these two trivial solution cases.

\begin{remark}\label{ad4}
In comparison with the original Allen-Cahn equation, the modified Allen-Cahn equation \eqref{MAC3} can preserve the mass conservation by introducing the local and nonlocal Lagrange multiplier.
However, the energy dissipation law does not hold with respect to the orginal  energy functional \eqref{1212}. To the best of our knowledge, the mass-conserving Allen-Cahn equation (6) with the double-well potential function has been proved to possess the MBP \cite{MAC3}, while whether or not the MBP also holds for the logarithmic potential case or other forms is still an open question. Therefore, in this paper we only focus on studying  the unconditional MBP-preserving schemes for the double-well potential function case.
\end{remark}

\subsection{Linear splitting for stabilization}

Let us define
\begin{equation}
\lambda_{u}(t)=\frac{\int_\Omega f(u(\bx,t))\,{\rm d}\bx}{\int_\Omega g(u(\bx,t))\,{\rm d}\bx},
\end{equation}
then we can write the equation \eqref{MAC3} as
\begin{equation}
\partial_tu(\bx,t)=\varepsilon^2\Delta u(\bx,t)+{f}(u(\bx,t))-\lambda_u(t){g}(u(\bx,t)).
\end{equation}
Next we present a result on the boundedness of $\lambda_u(t)$.

\begin{lemma}\label{lem-lambda}
For any function $\xi \in C(\overline\Omega)$ with $\|\xi\|\leq 1$ and $\xi\nequiv \pm 1$, it holds that
\begin{equation}
\label{cond_g}
|\lambda_{\xi}|\leq1.
\end{equation}
\end{lemma}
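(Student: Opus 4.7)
The plan is to exploit the algebraic identity $f(u) = u(1-u^2) = u\,g(u)$ to bound the numerator of $\lambda_\xi$ pointwise by the integrand of the denominator. Since the hypothesis $\|\xi\|\leq 1$ supplies exactly the factor needed to absorb $|u|$ into $g(u)$, the inequality should drop out directly once the denominator is shown to be strictly positive.

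First I would write $f(\xi(\bx)) = \xi(\bx)\,g(\xi(\bx))$, note that $g(\xi(\bx)) = 1-\xi(\bx)^2 \geq 0$ everywhere thanks to $\|\xi\|\leq 1$, and deduce the pointwise estimate
\begin{equation*}
|f(\xi(\bx))| \;=\; |\xi(\bx)|\,g(\xi(\bx)) \;\leq\; g(\xi(\bx)), \qquad \bx\in\overline\Omega.
\end{equation*}
Integrating and applying the triangle inequality then gives
\begin{equation*}
\left| \int_\Omega f(\xi(\bx))\,{\rm d}\bx \right| \;\leq\; \int_\Omega |f(\xi(\bx))|\,{\rm d}\bx \;\leq\; \int_\Omega g(\xi(\bx))\,{\rm d}\bx,
\end{equation*}
which is the quotient inequality $|\lambda_\xi|\leq 1$ once division is justified.

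The only step requiring care is verifying that $\int_\Omega g(\xi)\,{\rm d}\bx > 0$, so that $\lambda_\xi$ is well defined. For this, I would argue by contradiction: if the integral vanished, then since $g(\xi)\geq 0$ and $\xi$ is continuous, we would have $g(\xi(\bx)) = 1 - \xi(\bx)^2 \equiv 0$ on $\overline\Omega$, forcing $\xi(\bx)\in\{-1,+1\}$ everywhere. Since $\Omega$ is connected (an assumption stated in the introduction) and $\xi$ is continuous, this forces $\xi\equiv +1$ or $\xi\equiv -1$, contradicting the hypothesis $\xi\nequiv \pm 1$. Hence the denominator is strictly positive and the conclusion follows.

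The hard part, to the extent there is one, is this positivity of the denominator; the inequality $|f|\leq g$ itself is a one-line algebraic observation. No regularity beyond continuity of $\xi$ and connectedness of $\Omega$ is used, so the lemma applies in exactly the generality needed to handle the pointwise values of a continuous-in-space iterate in the subsequent time-discrete analysis.
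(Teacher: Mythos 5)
Your proof is correct and follows essentially the same route as the paper's: the pointwise bound $|f(\xi(\bx))| = |\xi(\bx)|\,g(\xi(\bx)) \le \|\xi\|\,g(\xi(\bx)) \le g(\xi(\bx))$ followed by integration is exactly the estimate used there. The only difference is that you spell out why $\int_\Omega g(\xi)\,{\rm d}\bx > 0$ (nonnegativity, continuity, and connectedness of $\Omega$), a point the paper simply asserts as clear.
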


\begin{proof}
Since $\|\xi\|\leq1$ we have $g(\xi(\bx))=1-\xi^2(\bx)\geq 0$ for any $\bx\in\overline\Omega$. Furthermore,
it is clear  $\int_\Omega g(\xi(\bx))\,{\rm d}\bx  > 0$ since $\xi \in C(\overline\Omega)$ and $\xi\nequiv \pm 1$.
Thus we have
\begin{align*}
|\lambda_{\xi}|=&\left|\frac{\int_\Omega f(\xi(\bx))\,{\rm d}\bx}{\int_\Omega g(\xi(\bx))\,{\rm d}\bx}\right|=\frac{\left|\int_\Omega \xi(\bx)-\xi(\bx)^3\,{\rm d}\bx\right|}{\int_\Omega 1-\xi(\bx)^2\,{\rm d}\bx}\\[2pt]
\leq& \frac{\int_\Omega \|\xi\|(1-\xi(\bx)^2)\,{\rm d}\bx}{\int_\Omega 1-\xi(\bx)^2\,{\rm d}\bx}=  \|\xi\|\leq 1,
\end{align*}
which completes the proof.
\end{proof}

\begin{remark}
Note that for any function $\xi \in C(\overline\Omega)$ with $\|\xi\|\leq 1$,   the condition $\xi\nequiv \pm 1$ is equivalent to
$|\int_{\Omega}\xi(\bx)\,{\rm d}\bx| \ne |\Omega|$.
In the case of the constant functions $\xi\equiv \pm 1$, the above result can be understood in the limit sense.
The  boundedness of $\lambda_{u}(t)$ plays an important role on ensuring that the solutions to the mass-conserving Allen-Cahn equation \eqref{MAC3} and the corresponding temporally discretized equation analyzed later are always located in the interval $[-1,1]$.
\end{remark}

Next, let us introduce the stabilizing constant $\kappa>0$.
Correspondingly, the mass-conserving Allen-Cahn equation \eqref{MAC3} can be written in the following equivalent form
\begin{align}\label{stab-converse}
\partial_tu(\bx,t)=\mathcal{L}_\kappa u(\bx,t)+\mathcal{N}[u](\bx,t), \qquad \bx\in \Omega, \ t>0,
\end{align}
where the linear operator
\begin{align*}
\mathcal{L}_\kappa=\varepsilon^2\Delta-\kappa\mathcal{I}
\end{align*}
and the nonlinear term
\begin{align*}
\mathcal{N}[u](\bx,t)=\;&\kappa u(\bx,t)+\bar{f}[u](\bx,t)\\
=\;&\kappa u(\bx,t)+f(u(\bx,t))-\lambda_u(t)g(u(\bx,t)).
\end{align*}

We require that the stabilizing constant $\kappa$ always  satisfies
\begin{align}\label{coef}
\kappa\geq \max_{|\xi|\leq 1}(|f'(\xi)|+|g'(\xi)|)=\max_{|\xi|\leq 1}(|1-3\xi^2|+|-2\xi|)=2+2=4,
\end{align}
Then we have the following lemma on the nonlinear term.

\begin{lemma}\label{lem-nonlinear-bound}
Suppose that  the requirement \eqref{coef} holds.  For any function $\xi\in C(\overline\Omega)$ with $\|\xi\|\leq 1$ and $\xi\nequiv \pm 1$, we have
\begin{equation}
\|\mathcal{N}[\xi]\|\leq\kappa.
\end{equation}
\end{lemma}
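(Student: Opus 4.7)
The plan is to treat the inequality pointwise. Fix $\bx \in \overline\Omega$ and observe that $\lambda_\xi$ is a constant (independent of $\bx$), so I can define a one-variable auxiliary function
\[
\phi_c(s) = \kappa s + f(s) - c\, g(s) = \kappa s + (s - s^3) - c(1 - s^2), \qquad s \in [-1,1],
\]
with $c = \lambda_\xi$. By Lemma~\ref{lem-lambda}, the hypotheses $\|\xi\|\le 1$ and $\xi\nequiv \pm 1$ guarantee $|c| \le 1$. The goal then reduces to showing $|\phi_c(s)|\le \kappa$ uniformly for $|s|\le 1$ and $|c|\le 1$, since $\mathcal{N}[\xi](\bx) = \phi_{\lambda_\xi}(\xi(\bx))$.

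The first step is monotonicity. I would differentiate:
\[
\phi_c'(s) = \kappa + f'(s) - c\, g'(s) = \kappa + (1 - 3s^2) + 2cs.
\]
Using $|1 - 3s^2|\le 2$ and $|2cs|\le 2|c|\le 2$ for $|s|\le 1$, I get $|f'(s) - c\, g'(s)| \le 4 \le \kappa$ by the standing requirement~\eqref{coef}. Hence $\phi_c'(s)\ge 0$ on $[-1,1]$, so $\phi_c$ is non-decreasing there.

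The second step is to evaluate the endpoints using \eqref{bceq1}: since $f(\pm 1)=0$ and $g(\pm 1)=0$, we have $\phi_c(1) = \kappa$ and $\phi_c(-1) = -\kappa$. Combined with monotonicity this yields $-\kappa \le \phi_c(s)\le \kappa$ on $[-1,1]$. Substituting $s = \xi(\bx)$ and taking the supremum over $\bx$ gives $\|\mathcal{N}[\xi]\|\le \kappa$.

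There is no real obstacle here beyond making sure the factor $\kappa$ dominates the combined Lipschitz contribution of $f$ and $g$; the only subtlety is that both boundary conditions $f(\pm 1)= g(\pm 1)=0$ are needed at both endpoints so that the Lagrange-multiplier term $\lambda_\xi g$ does not spoil the cancellation that produces exactly $\pm\kappa$ at $s=\pm 1$. This is precisely why the definition $g(u) = \sqrt{4F(u)} = 1-u^2$ is chosen to share the zeros of $f$.
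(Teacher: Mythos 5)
Your proof is correct and follows essentially the same route as the paper: both establish that $s\mapsto \kappa s+f(s)-\lambda_\xi g(s)$ is non-decreasing on $[-1,1]$ via the bound $\kappa\ge |f'|+|g'|$ together with $|\lambda_\xi|\le 1$ from Lemma~\ref{lem-lambda}, and then read off the values $\pm\kappa$ at the endpoints using $f(\pm1)=g(\pm1)=0$. Your write-up merely makes the monotonicity step more explicit than the paper's.
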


\begin{proof}
For any $\xi\in C(\overline\Omega)$ such that $\|\xi\|\le1$,  we have from \eqref{coef} that
\begin{equation}\label{der}
0\leq\kappa +f'(\xi(\bx))-\lambda_{\xi} g'(\xi(\bx))\leq2\kappa,\quad\forall\,\bx\in\overline\Omega,
\end{equation}
where we have used the result $\|\lambda_{\xi}\|\leq1$ guaranteed by Lemma \ref{lem-lambda}.
Then, the combination of  \eqref{bceq1} and \eqref{der} yields
\begin{eqnarray}
-\kappa=-\kappa +f(-1)-\lambda_{\xi}g(-1)\hspace{-0.1cm}&&\leq \mathcal{N}[\xi](\bx)=\kappa\xi(\bx) +f(\xi(\bx))-\lambda_{\xi} g(\xi(\bx))
\nonumber\\
&&\leq\kappa +f(1)-\lambda_{\xi} g(1)=\kappa
\end{eqnarray}
for any $\bx\in\overline\Omega$, which completes
 the proof.
\end{proof}

\subsection{Exponential time differencing for time integration}

Now we propose and analyze first- and second-order linear schemes for time integration of the mass-conserving Allen-Cahn equation \eqref{MAC3} based on the equivalent form \eqref{stab-converse} and the exponential time differencing approach.

Let us divide the time interval by $\{t_n=n\tau\}_{n\geq0}$ with a time step size $\tau>0$.
The essence of the ETD method is to approximate the nonlinear operators $\mathcal{N}[u]$ by some interpolation.
We define $w(\mathbf{x},s)=u(\mathbf{x},t_n+s)$ for $s\in[0,\tau]$,
then we have the following problem:
\begin{align}\label{etd}
\left\{
\begin{array}{ll}
\partial_s w=\mathcal{L}_\kappa w+\mathcal{N}[w],&\quad \mathbf{x}\in \Omega,\,s\in (0,\tau],\\[2pt]
w(\mathbf{x},0)=u(\mathbf{x},t_n),&\quad\mathbf{x}\in \overline\Omega,\\
\end{array}
\right.
\end{align}
equipped with the periodic boundary condition or homogeneous Neumann boundary condition.

Setting $\mathcal{N}[u(t_n+s)]\approx\mathcal{N}[u(t_n)]$ in \eqref{etd} gives the first-order ETD (ETD1) scheme:
for $n\geq0$ and given $u^n$, find $u^{n+1}=w^n(\tau)$ solving
\begin{align}\label{etd1}
\left\{
\begin{array}{ll}
\partial_s w^n=\mathcal{L}_\kappa w^n+\mathcal{N}[u^n],
&\quad \mathbf{x}\in \Omega,\, s\in (0,\tau],\\[2pt]
w^n(\mathbf{x},0)=u^n,&\quad\mathbf{x}\in \overline{\Omega},\\
\end{array}
\right.
\end{align}
subject to the periodic or homogeneous Neumann boundary condition,
where $u^n$ represents an approximation of $u(t_n)$ and $u^0=u_0(\cdot)$ is given.

First we have  the following lemma regarding the Laplace operator.

\begin{lemma}\cite{SAM1}\label{lem-linear}
For any $w\in\{ u\in C(\Omega)~|~\Delta u\in C(\Omega)\}$ and $\bx_0\in \Omega$, if
\begin{eqnarray*}
w(\bx_0)=\max_{\mathbf{x}\in \overline{\Omega}}w(\mathbf{x}),
\end{eqnarray*}
then $\Delta w(\bx_0)\leq0$.
The Laplace operator $\Delta$, enforced by the periodic or homogeneous Neumann boundary condition,
generates a contraction semigroup $\{e^{\Delta t}\}_{t\geq0}$ with respect to the supremum norm on the subspace of $C(\overline{\Omega})$\cite{SAM1}, and for any $\alpha\geq0$, it holds that
\begin{align}\label{contraction-map}
\|e^{t(\Delta-\alpha)}u\|\leq e^{-\alpha t}\|u\|, \quad t\geq0,
\end{align}
for any $u\in C(\overline{\Omega})$.
\end{lemma}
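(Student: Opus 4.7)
The plan is to tackle the three assertions in order, using the pointwise second-derivative criterion to drive a parabolic maximum principle, which in turn yields the supremum-norm contraction of $\{e^{\Delta t}\}_{t\geq 0}$; the $\alpha$-shifted bound then falls out of commutativity.

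First, I would prove $\Delta w(\bx_0)\leq 0$ by the classical second-derivative test. Since $\bx_0\in\Omega$ is interior and $w$ attains its maximum on $\overline\Omega$ at $\bx_0$, for each coordinate direction $\mathbf{e}_i$ the one-variable function $s\mapsto w(\bx_0+s\mathbf{e}_i)$ has a local maximum at $s=0$, so $\partial_{ii}w(\bx_0)\leq 0$ (interpreted via centered-difference quotients together with the continuity hypothesis $\Delta w\in C(\Omega)$ when classical second derivatives are not directly assumed). Summing over $i=1,\dots,d$ yields $\Delta w(\bx_0)\leq 0$.

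Next, I would deduce the contraction property of $\{e^{\Delta t}\}_{t\geq 0}$ from the parabolic maximum principle. Given $u\in C(\overline\Omega)$, set $v(\bx,t)=(e^{\Delta t}u)(\bx)$, which solves $\partial_t v=\Delta v$ with initial datum $u$ and the prescribed boundary condition; for any $\delta>0$ the perturbation $\tilde v=v-\delta t$ satisfies the strict inequality $\partial_t\tilde v-\Delta\tilde v=-\delta<0$. Fixing $T>0$, suppose $\tilde v$ attains its maximum on $\overline\Omega\times[0,T]$ at $(\bx^*,t^*)$ with $t^*>0$. If $\bx^*\in\Omega$, the first part gives $\Delta\tilde v(\bx^*,t^*)\leq 0$ and hence $\partial_t\tilde v(\bx^*,t^*)\leq -\delta<0$, contradicting that $t^*$ is a temporal maximum; in the periodic case $\overline\Omega$ has no boundary and the interior argument covers every point, while in the Neumann case a potential boundary maximum $\bx^*\in\partial\Omega$ is ruled out by Hopf's boundary-point lemma, which would enforce $\partial_{\mathbf{n}}\tilde v(\bx^*,t^*)>0$, contradicting $\partial_{\mathbf{n}}\tilde v=0$. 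Hence the maximum of $\tilde v$ is attained at $t=0$, giving $v(\bx,T)\leq\max u+\delta T$, and sending $\delta\to 0^+$ yields $v(\bx,T)\leq\max u$. Applying the same argument to $-v$ furnishes the matching lower bound, so $\|v(\cdot,t)\|\leq\|u\|$. The shifted estimate then follows immediately from commutativity: since $\alpha\mathcal{I}$ commutes with $\Delta$, one has $e^{t(\Delta-\alpha)}=e^{-\alpha t}e^{t\Delta}$, and therefore
\begin{equation*}
\|e^{t(\Delta-\alpha)}u\|=e^{-\alpha t}\|e^{t\Delta}u\|\leq e^{-\alpha t}\|u\|.
\end{equation*}

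The main obstacle is the careful handling of the Neumann boundary in the contraction step: the first assertion by itself does not prevent a spatial maximum from sitting on $\partial\Omega$, so one must appeal to Hopf's boundary-point lemma (available thanks to the Lipschitz regularity of $\partial\Omega$ and the parabolic smoothing of $v$) or else approximate by strictly subcaloric auxiliary functions and pass to the limit. Semigroup generation itself is a standard result for the Laplacian under periodic or homogeneous Neumann boundary conditions, so I would cite it and concentrate the work on the contraction inequality.
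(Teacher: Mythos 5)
The paper does not actually prove this lemma: it is imported wholesale from \cite{SAM1}, where the contraction property is obtained through operator semigroup theory rather than through a parabolic maximum principle. In that reference the first assertion of the lemma (an interior maximum forces $\Delta w(\bx_0)\leq 0$) is precisely the verification that $\Delta$ is dissipative with respect to the supremum norm, and the contraction semigroup then comes from the Lumer--Phillips/Hille--Yosida machinery; the $\alpha$-shift is handled exactly as you do, via $e^{t(\Delta-\alpha)}=e^{-\alpha t}e^{t\Delta}$. Your route --- solve the heat equation, perturb by $-\delta t$, locate the maximum, and let $\delta\to 0^+$ --- is the classical alternative, and for the periodic case (a torus, no boundary) it goes through cleanly once you add the standard smoothing remark: for $u$ merely in $C(\overline\Omega)$ one applies the argument on $[\epsilon,T]$ where $v$ is smooth and then uses continuity down to $t=0$.

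There are two points where your argument is thinner than it should be. First, in the Neumann case you lean on Hopf's boundary-point lemma, but that lemma requires an interior ball condition at the boundary point, which a merely Lipschitz boundary (the only regularity the paper assumes) does not provide; your fallback of ``approximating by strictly subcaloric functions'' is the $-\delta t$ perturbation you already used and does not by itself exclude a boundary maximum. For the rectangular domains the paper actually works with, the honest fix is reflection across the faces to reduce Neumann to periodic; for general Lipschitz domains one really does need the functional-analytic (sub-Markovian/dissipativity) route of \cite{SAM1}. Second, the hypothesis on $w$ is only $w\in C(\Omega)$ with $\Delta w\in C(\Omega)$, so pointwise second partials $\partial_{ii}w(\bx_0)$ need not exist and the coordinate-wise second-derivative test is not directly licensed; the robust version of your first step is the ball-average characterization
\begin{equation*}
\Delta w(\bx_0)=\lim_{r\to 0^+}\frac{2(d+2)}{r^{2}}\left(\frac{1}{|B_r(\bx_0)|}\int_{B_r(\bx_0)}w(\bx)\,{\rm d}\bx-w(\bx_0)\right)\leq 0,
\end{equation*}
which uses only that the average over any small ball cannot exceed the maximum value. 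With those two repairs your proof is a correct, more elementary substitute for the citation.
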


\begin{proposition}[Mass conservation of the ETD1 scheme]
\label{thm-etd1-mass}
The ETD1 scheme \eqref{etd1} conserves the mass unconditionally, i.e.,
for any time step size $\tau>0$, the ETD1 solution satisfies
\begin{equation}\label{etd1-mass}
\int_\Omega u^{n}\,{\rm d}\mathbf{x}=M_0,\quad\forall\,n\geq 0.
\end{equation}
\end{proposition}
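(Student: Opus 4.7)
The plan is to prove the identity by induction on $n$. The base case $n=0$ is immediate since $u^0 = u_0$ by construction, so $\int_\Omega u^0\,{\rm d}\mathbf{x} = M_0$ by the definition of $M_0$. For the inductive step, I would assume $\int_\Omega u^n\,{\rm d}\mathbf{x} = M_0$ and aim to show $\int_\Omega u^{n+1}\,{\rm d}\mathbf{x} = M_0$, where $u^{n+1}=w^n(\tau)$ is obtained by solving the subproblem \eqref{etd1}.

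The main observation, which is the real content of the proposition, is that the nonlinear source term $\mathcal{N}[u^n]$ is designed so that its spatial average is exactly $\kappa$ times the spatial average of $u^n$. Concretely, I would compute
\begin{equation*}
\int_\Omega \mathcal{N}[u^n]\,{\rm d}\mathbf{x}
= \kappa\int_\Omega u^n\,{\rm d}\mathbf{x} + \int_\Omega f(u^n)\,{\rm d}\mathbf{x} - \lambda_{u^n}\int_\Omega g(u^n)\,{\rm d}\mathbf{x}
= \kappa M_0,
\end{equation*}
where the cancellation of the last two terms is forced by the very definition of $\lambda_{u^n}$, and the first term uses the inductive hypothesis.

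Next I would integrate the ODE $\partial_s w^n = \varepsilon^2 \Delta w^n - \kappa w^n + \mathcal{N}[u^n]$ over $\Omega$. The Laplacian contributes nothing because either periodic or homogeneous Neumann boundary conditions are imposed (an application of the divergence theorem gives $\int_\Omega \Delta w^n\,{\rm d}\mathbf{x}=0$). Setting $I(s):=\int_\Omega w^n(\mathbf{x},s)\,{\rm d}\mathbf{x}$, the resulting scalar ODE is
\begin{equation*}
I'(s) = -\kappa I(s) + \kappa M_0, \qquad I(0)=M_0,
\end{equation*}
whose unique solution is $I(s)\equiv M_0$. Evaluating at $s=\tau$ gives $\int_\Omega u^{n+1}\,{\rm d}\mathbf{x} = M_0$, closing the induction.

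There is no real obstacle here; the proposition amounts to verifying that the Lagrange multiplier $\lambda_{u^n}$ was chosen correctly so that the nonlinear part of $\mathcal{N}[u^n]$ has zero mean, and that the linear stabilization term $\kappa u$ is accommodated trivially because the same $-\kappa\mathcal{I}$ appears in $\mathcal{L}_\kappa$. Alternatively, one could use the variation-of-constants formula $u^{n+1} = e^{\tau\mathcal{L}_\kappa}u^n + \tau\varphi_1(\tau\mathcal{L}_\kappa)\mathcal{N}[u^n]$ and invoke the fact that $e^{t\mathcal{L}_\kappa}$ commutes with spatial integration up to the factor $e^{-\kappa t}$, but the ODE-integration approach above is cleaner and extends directly to the second-order scheme to be treated later.
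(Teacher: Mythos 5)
Your proof is correct and follows essentially the same route as the paper's: induction on $n$, integrating the subproblem ODE over $\Omega$ (where the Laplacian vanishes by the boundary conditions and the $f$ and $\lambda_{u^n}g$ contributions cancel by the definition of $\lambda_{u^n}$), and solving the resulting scalar ODE $V'(s)+\kappa V(s)=\kappa M_0$ with $V(0)=M_0$. The only cosmetic difference is that you observe $V\equiv M_0$ is the unique solution directly while the paper integrates with the factor $e^{\kappa s}$; the content is identical.
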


\begin{proof} By induction, assuming that $\displaystyle\int_\Omega u^n\,{\rm d}\mathbf{x}=M_0$  is given,
we only need to show $\displaystyle\int_\Omega u^{n+1}\,{\rm d}\mathbf{x}=M_0$.
Taking the $L^2$ inner product of \eqref{etd1} with $1$, we immediately obtain
\begin{align*}
\frac{d}{ds}\int_\Omega w^n\,{\rm d}\mathbf{x}+\kappa \int_\Omega w^n\,{\rm d}\mathbf{x}=\kappa \int_\Omega u^n\,{\rm d}\mathbf{x}=\kappa M_0.
\end{align*}
Let $V(s)=\int_\Omega w^n(s)\,{\rm d}\mathbf{x}$,  then we have
\begin{align*}
\frac{dV(s)}{ds}+\kappa V(s)=\kappa M_0,
\end{align*}
with $V(0)=M_0$.
Multiplying by the exponential term $e^{\kappa s}$ and integrating on the interval $[0,\tau]$, we immediately get
\begin{align*}
V(\tau)e^{\kappa \tau}-M_0= M_0(e^{\kappa \tau}-1),
\end{align*}
which implies $\displaystyle\int_\Omega u^{n+1}\,{\rm d}\mathbf{x}=V(\tau)=M_0$.
\end{proof}

Proposition \ref{thm-etd1-mass} implies  that if $u_0\nequiv \pm 1$ (i.e., $|M_0|\ne |\Omega|$), then $u^n\nequiv \pm 1$
for any $n\geq 0$.

\begin{theorem}[Discrete MBP of the ETD1 scheme]
\label{thm-etd1-mbp}
Suppose that  the requirement \eqref{coef} holds and $\|u_0\|\leq 1$ with $|M_0|\ne |\Omega|$. Then
the ETD1 scheme \eqref{etd1} preserves the discrete MBP unconditionally, i.e.,
for any time step size $\tau>0$, the ETD1 solution satisfies $\|u^n\|\leq1$ for any $n\geq0$.
\end{theorem}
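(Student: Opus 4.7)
The plan is to argue by induction on $n$, with the base case $\|u^0\|=\|u_0\|\leq 1$ being immediate. For the inductive step, I will exploit the fact that $\mathcal{N}[u^n]$ is constant in $s$, so the subproblem \eqref{etd1} for $w^n$ can be solved in closed form by the variation-of-constants formula as
\begin{equation*}
u^{n+1}=w^n(\tau)=e^{\tau\mathcal{L}_\kappa}u^n+\int_0^\tau e^{(\tau-s)\mathcal{L}_\kappa}\mathcal{N}[u^n]\,{\rm d}s.
\end{equation*}
The strategy is to take supremum norms on both sides and control each piece using the contractive estimate \eqref{contraction-map} together with the nonlinear bound in Lemma \ref{lem-nonlinear-bound}.

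Before invoking Lemma \ref{lem-nonlinear-bound}, I need to verify its hypothesis $u^n\nequiv\pm 1$; this is where the already-proved mass conservation of Proposition \ref{thm-etd1-mass} enters. Since $|M_0|\ne|\Omega|$ by assumption and $\int_\Omega u^n\,{\rm d}\bx=M_0$, the inductive configuration $\|u^n\|\leq 1$ together with $u^n\nequiv \pm 1$ is preserved, and Lemma \ref{lem-nonlinear-bound} then yields $\|\mathcal{N}[u^n]\|\leq \kappa$.

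With these pieces in hand, the core computation uses $\mathcal{L}_\kappa=\varepsilon^2\Delta-\kappa\mathcal{I}$ and Lemma \ref{lem-linear} to write
\begin{equation*}
\|e^{t\mathcal{L}_\kappa}v\|\leq e^{-\kappa t}\|v\|\qquad \text{for all } t\geq 0,\ v\in C(\overline\Omega).
\end{equation*}
Applying this to the two terms above gives
\begin{equation*}
\|u^{n+1}\|\leq e^{-\kappa\tau}\|u^n\|+\int_0^\tau e^{-\kappa(\tau-s)}\|\mathcal{N}[u^n]\|\,{\rm d}s \leq e^{-\kappa\tau}+\kappa\cdot\frac{1-e^{-\kappa\tau}}{\kappa}=1,
\end{equation*}
which closes the induction.

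Overall the argument is short and relies crucially on three already-established ingredients: the contractive semigroup estimate (Lemma \ref{lem-linear}), the nonlinear bound $\|\mathcal{N}[\cdot]\|\leq\kappa$ (Lemma \ref{lem-nonlinear-bound}), and the discrete mass conservation (Proposition \ref{thm-etd1-mass}). The only subtle point, and the one I would treat carefully, is the bookkeeping that ensures $u^n\nequiv\pm 1$ at every iterate so that $\lambda_{u^n}$ is well defined and Lemma \ref{lem-nonlinear-bound} applies; everything else reduces to the tight telescoping identity $e^{-\kappa\tau}+(1-e^{-\kappa\tau})=1$, which is precisely why the stabilization requirement \eqref{coef} is designed to absorb both $f'$ and $g'$.
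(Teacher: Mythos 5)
Your proposal is correct and follows essentially the same route as the paper: the Duhamel representation of the ETD1 step, the contraction estimate from Lemma \ref{lem-linear}, the bound $\|\mathcal{N}[u^n]\|\leq\kappa$ from Lemma \ref{lem-nonlinear-bound}, and the exact cancellation $e^{-\kappa\tau}+(1-e^{-\kappa\tau})=1$. Your explicit bookkeeping of $u^n\nequiv\pm 1$ via Proposition \ref{thm-etd1-mass} is precisely the point the paper records in the sentence immediately following that proposition.
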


\begin{proof}
By induction, we just need to show that $\|u^n\|\leq1$ and $u^n\nequiv \pm 1$ deduce $\|u^{n+1}\|\leq1$ for any $n$. The integration form of the ETD1 scheme \eqref{etd1} reads as
\begin{equation}\label{etd1-12349}
u^{n+1}=e^{\tau\mathcal{L}_\kappa}u^n+\int_{0}^\tau e^{(\tau-s)\mathcal{L}_\kappa}\mathcal{N}[u^n]\,\mathrm{d} s.
\end{equation}
According to Lemmas \ref{lem-nonlinear-bound}-\ref{lem-linear} and $\|u^n\|\leq1$, we obtain
\begin{align*}
\|u^{n+1}\| & \le \|e^{\tau\mathcal{L}_\kappa}\|\|u^n\|+\int_{0}^\tau \|e^{(\tau-s)\mathcal{L}_\kappa}\|\|\mathcal{N}[u^n]\|\,\mathrm{d} s\\
& \le e^{-\kappa\tau}+\int_{0}^\tau e^{-(\tau-s)\kappa}\kappa\,\mathrm{d} s\\
&= e^{-\kappa\tau}+\kappa\frac{1-e^{-\kappa\tau}}{\kappa} =1,
\end{align*}
which completes the proof.
\end{proof}

\begin{remark}\label{ad4221}
By approximating $e^{-\tau\mathcal{L}_\kappa} \approx\mathcal{I}-\tau\mathcal{L}_\kappa$ in the ETD1 scheme \eqref{etd1-12349}, one can obtain
\[
\frac{u^{n+1}-u^n}{\tau} = \mathcal{L}_\kappa u^{n+1} + \mathcal{N}[u^n],
\]
which is exactly the standard stabilized implicit-explicit Euler (IMEX1) scheme,
linear, and also preserves the MBP \cite{mbp12} unconditionally.
Such an observation suggests that the IMEX1 scheme is actually an approximation of the ETD1 scheme,
and the ETD1 solution is more accurate since it preserves completely the exponential behavior of the linear operator and partially
the nonlinear term\cite{ju2,ju1} while the IMEX1 scheme only uses the first-order leading term.
\end{remark}

The second-order ETD scheme of Runge-Kutta (ETDRK2) type is given by: find $u^{n+1}=w^n(\tau)$ solving
\begin{align}\label{etd2rk}
\left\{
\begin{array}{ll}
\partial_s w^n=\mathcal{L}_\kappa w^n+\Big(1-\frac{s}{\tau}\Big)\mathcal{N}[u^n]+\frac{s}{\tau}\mathcal{N}[\tilde{u}^{n+1}],
&\quad \mathbf{x}\in \Omega, \ s\in (0,\tau],\\[2pt]
w^n(x,0)=u^n,&\quad \mathbf{x}\in \overline{\Omega},\\
\end{array}
\right.
\end{align}
with $u^0=u_0(\cdot)$, subject to the periodic or homogeneous Neumann boundary condition,
where $\tilde{u}^{n+1}$ is generated by the ETD1 scheme \eqref{etd1}.
It is worth noting that both ETD1 and ETDRK2 schemes are linear.
We now prove the mass conservation and the discrete MBP for the ETDRK2 scheme.

\begin{proposition}[Mass conservation of the ETDRK2 scheme]
\label{thm-etd2rk-mass}
The ETDRK2 scheme \eqref{etd2rk} conserves the mass  unconditionally, i.e.,
for any time step size $\tau>0$, the ETDRK2 solution satisfies
\begin{equation}\label{etd2rk-mass}
\int_\Omega u^{n}\,{\rm d}\mathbf{x}=M_0,\quad\forall\,n\geq 0.
\end{equation}
\end{proposition}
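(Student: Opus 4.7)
The plan is to mirror the argument used for the ETD1 scheme in Proposition 2, proceeding by induction on $n$ and reducing the mass evolution to a scalar linear ODE whose only solution with initial value $M_0$ is the constant $M_0$. The inductive step must handle one extra wrinkle: the ETDRK2 right-hand side involves $\mathcal{N}[\tilde u^{n+1}]$, so I first need to know the mass of the predictor $\tilde u^{n+1}$.

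First I would invoke Proposition~\ref{thm-etd1-mass} applied to the ETD1 predictor to obtain $\int_\Omega \tilde u^{n+1}\,{\rm d}\bx = M_0$ whenever $\int_\Omega u^n\,{\rm d}\bx = M_0$. The crucial structural observation, which is the whole point of the local--nonlocal Lagrange multiplier in \eqref{nonlocal-func}, is that $\bar f[\xi]$ has zero spatial mean by construction: $\int_\Omega \bar f[\xi]\,{\rm d}\bx = \int_\Omega f(\xi)\,{\rm d}\bx - \lambda_\xi \int_\Omega g(\xi)\,{\rm d}\bx = 0$ directly from the definition of $\lambda_\xi$. Consequently, for any admissible $\xi$,
\begin{equation*}
\int_\Omega \mathcal{N}[\xi]\,{\rm d}\bx = \kappa \int_\Omega \xi\,{\rm d}\bx + \int_\Omega \bar f[\xi]\,{\rm d}\bx = \kappa \int_\Omega \xi\,{\rm d}\bx.
\end{equation*}
Applying this with $\xi = u^n$ and $\xi = \tilde u^{n+1}$ gives $\int_\Omega \mathcal{N}[u^n]\,{\rm d}\bx = \int_\Omega \mathcal{N}[\tilde u^{n+1}]\,{\rm d}\bx = \kappa M_0$.

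Next I would take the $L^2$ inner product of \eqref{etd2rk} with the constant $1$. The term $\int_\Omega \mathcal{L}_\kappa w^n\,{\rm d}\bx$ reduces to $-\kappa \int_\Omega w^n\,{\rm d}\bx$ because $\int_\Omega \Delta w^n\,{\rm d}\bx = 0$ under either the periodic or homogeneous Neumann boundary condition. Setting $V(s) = \int_\Omega w^n(\cdot,s)\,{\rm d}\bx$, the convex combination $(1-s/\tau)\mathcal{N}[u^n] + (s/\tau)\mathcal{N}[\tilde u^{n+1}]$ contributes $(1-s/\tau)\kappa M_0 + (s/\tau)\kappa M_0 = \kappa M_0$ independent of $s$, so $V$ satisfies the same linear ODE as in the ETD1 case,
\begin{equation*}
V'(s) + \kappa V(s) = \kappa M_0, \qquad V(0) = M_0.
\end{equation*}
Solving exactly (or simply noting that $V \equiv M_0$ is a solution and appealing to uniqueness) gives $V(\tau) = M_0$, i.e.\ $\int_\Omega u^{n+1}\,{\rm d}\bx = M_0$, closing the induction.

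There is no real obstacle: the only nontrivial ingredient is the mean-zero property of $\bar f[\xi]$, which is built into the model, together with the previously proved mass conservation of the ETD1 predictor. The boundedness of $\lambda_{u^n}$ and $\lambda_{\tilde u^{n+1}}$ (needed only to make sense of $\mathcal{N}[\tilde u^{n+1}]$) follows from Lemma~\ref{lem-lambda} provided $\tilde u^{n+1} \not\equiv \pm 1$, which is guaranteed by $|M_0| \neq |\Omega|$ together with the mass conservation just established for the predictor.
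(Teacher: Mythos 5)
Your proposal is correct and follows essentially the same route as the paper: both reduce the mass evolution to the scalar ODE $V'(s)+\kappa V(s)=\kappa M_0$ after testing the scheme with $1$, using Proposition~\ref{thm-etd1-mass} to get $\int_\Omega \tilde u^{n+1}\,{\rm d}\bx=M_0$ for the predictor. Your explicit remark that $\int_\Omega\bar f[\xi]\,{\rm d}\bx=0$ by the definition of $\lambda_\xi$ is exactly the (implicit) computation the paper performs when it writes the right-hand side as $\kappa M_0$.
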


 \begin{proof}
Similar to the proof for Proposition \ref{thm-etd1-mass},
by taking the $L^2$ inner product with \eqref{etd2rk} by $1$, we have
\begin{align*}
\frac{d}{ds}\int_\Omega w^n\,{\rm d}\mathbf{x}+\kappa \int_\Omega w^n\,{\rm d}\mathbf{x}=\Big(1-\frac{s}{\tau}\Big)\kappa\int_\Omega u^n\,{\rm d}\mathbf{x}+\frac{s}{\tau}\kappa\int_\Omega\tilde{u}^{n+1}\,{\rm d}\mathbf{x}=\kappa M_0,
\end{align*}
where we have used $\displaystyle\int_\Omega \tilde{u}^{n+1}\,{\rm d}\mathbf{x}=M_0$ from Proposition \ref{thm-etd1-mass}.
Thus we  obtain $\displaystyle\int_\Omega u^{n+1}\,{\rm d}\mathbf{x}=M_0$, which completes the proof.
\end{proof}

\begin{theorem}[Discrete MBP of the ETDRK2 scheme]\label{thm-etd2rk-mbp}
Suppose that  the requirement \eqref{coef} holds, $\|u_0\|\leq 1$ with $|M_0|\nequiv |\Omega|$. Then
the ETDRK2 scheme \eqref{etd2rk} preserves the discrete MBP unconditionally, i.e.,
for any time step size $\tau>0$, the ETDRK2 solution satisfies $\|u^n\|\leq1$ for any $n>0$.
\end{theorem}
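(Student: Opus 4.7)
The plan is to mirror the argument used for Theorem \ref{thm-etd1-mbp}, proceeding by induction on $n$, with the one new ingredient being that the intermediate ETD1 stage $\tilde{u}^{n+1}$ appearing in \eqref{etd2rk} must itself lie in the admissible class so that Lemma \ref{lem-nonlinear-bound} can be invoked for the evaluation $\mathcal{N}[\tilde{u}^{n+1}]$, not just for $\mathcal{N}[u^n]$.

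First I would set up the induction hypothesis $\|u^n\|\leq 1$ with $u^n\not\equiv \pm 1$, which holds at $n=0$ by assumption and propagates at later steps because Proposition \ref{thm-etd2rk-mass} gives $\int_\Omega u^n\,{\rm d}\mathbf{x}=M_0$ together with $|M_0|\ne|\Omega|$. Under this hypothesis, applying Theorem \ref{thm-etd1-mbp} and Proposition \ref{thm-etd1-mass} to the auxiliary step immediately yields $\|\tilde{u}^{n+1}\|\leq 1$ and $\tilde{u}^{n+1}\not\equiv \pm 1$. Consequently Lemma \ref{lem-nonlinear-bound} delivers the uniform bounds
\begin{equation*}
\|\mathcal{N}[u^n]\|\leq \kappa,\qquad \|\mathcal{N}[\tilde{u}^{n+1}]\|\leq \kappa.
\end{equation*}

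Next I would rewrite \eqref{etd2rk} in its variation-of-constants form
\begin{equation*}
u^{n+1}=e^{\tau\mathcal{L}_\kappa}u^n+\int_0^\tau e^{(\tau-s)\mathcal{L}_\kappa}\Bigl[\Bigl(1-\tfrac{s}{\tau}\Bigr)\mathcal{N}[u^n]+\tfrac{s}{\tau}\mathcal{N}[\tilde{u}^{n+1}]\Bigr]\,{\rm d}s,
\end{equation*}
take the supremum norm, and apply the contraction estimate \eqref{contraction-map} from Lemma \ref{lem-linear} to obtain $\|e^{\tau\mathcal{L}_\kappa}u^n\|\leq e^{-\kappa\tau}$ and $\|e^{(\tau-s)\mathcal{L}_\kappa}\|\leq e^{-(\tau-s)\kappa}$. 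Because the interpolation weights $1-s/\tau$ and $s/\tau$ are nonnegative and sum to one, the integrand is bounded in supremum norm by $e^{-(\tau-s)\kappa}\kappa$. The remaining calculation collapses to exactly the one performed in the ETD1 proof, namely $e^{-\kappa\tau}+\kappa\int_0^\tau e^{-(\tau-s)\kappa}\,{\rm d}s = e^{-\kappa\tau}+(1-e^{-\kappa\tau})=1$.

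There is no substantive obstacle here: the argument is essentially a clean reprise of the ETD1 case. The only subtle point, and the one I would emphasize, is verifying the admissibility of $\tilde{u}^{n+1}$ so that Lemma \ref{lem-nonlinear-bound} applies to \emph{both} nonlinear evaluations; this is precisely where having already established mass conservation (Proposition \ref{thm-etd1-mass}) together with the nontriviality assumption $|M_0|\ne|\Omega|$ becomes essential. Once that bookkeeping is in place, the convexity of the time interpolation ensures the bound $\kappa$ transfers intact to the convex combination, and the MBP estimate closes without any new analysis.
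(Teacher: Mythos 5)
Your proof is correct and follows essentially the same route as the paper: induction on $n$, the variation-of-constants representation, the contraction estimate of Lemma \ref{lem-linear}, and the bound $\kappa$ on the convex combination of $\mathcal{N}[u^n]$ and $\mathcal{N}[\tilde{u}^{n+1}]$ via Lemma \ref{lem-nonlinear-bound} after securing $\|\tilde{u}^{n+1}\|\leq 1$ from Theorem \ref{thm-etd1-mbp}. Your explicit tracking of the nontriviality conditions $u^n\nequiv\pm 1$ and $\tilde{u}^{n+1}\nequiv\pm 1$ through the mass-conservation propositions is a small bookkeeping point the paper leaves implicit, but it is the same argument.
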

\begin{proof}
By induction, let us assume that $\|u^n\|\leq1$ and $u^n\nequiv \pm 1$ for some $n$. From the ETDRK2 scheme \eqref{etd2rk}, we have
\begin{equation}\label{th235mbp}
u^{n+1}=e^{\tau\mathcal{L}_\kappa}u^n+\int_{0}^\tau e^{(\tau-s)\mathcal{L}_\kappa}\left[\Big(1-\frac{s}{\tau}\Big)\mathcal{N}[u^n]+\frac{s}{\tau}\mathcal{N}[\tilde{u}^{n+1}]\right]\,\mathrm{d} s.
\end{equation}
According to Lemmas \ref{lem-nonlinear-bound}-\ref{lem-linear},  $\|u^n\|\leq1$ and $\|\tilde{u}^{n+1}\|\leq1$ (by Theorem \ref{thm-etd1-mbp}), we obtain

\begin{align*}
\|u^{n+1}\| & \le \|e^{\tau\mathcal{L}_\kappa}\|\|u^n\|+\int_{0}^\tau \|e^{(\tau-s)\mathcal{L}_\kappa}\|\Big[\Big(1-\frac{s}{\tau}\Big)\|\mathcal{N}[u^n]\|+\frac{s}{\tau}\|\mathcal{N}[\tilde{u}^{n+1}]\|\Big]\,\mathrm{d} s\\
& \le e^{-\kappa\tau}+\int_{0}^\tau e^{-\kappa(\tau-s)}\Big[\Big(1-\frac{s}{\tau}\Big)\kappa+\frac{s}{\tau}\kappa\Big]\,\mathrm{d} s\\
& = e^{-\kappa\tau}+\kappa\frac{1-e^{-\kappa\tau}}{\kappa} = 1,
\end{align*}
which completes the proof.
\end{proof}

\begin{remark}
Different from the IMEX1 scheme, it was shown in \cite{zhangh1} that the IMEX Runge-Kutta schemes with order greater than 1 only preserves the MBP conditionally; more precisely, their MBP preservation still has the constraint on the time step size and the spatial mesh size.
\end{remark}

\begin{remark}
As claimed in \cite{SAM1}, the classic ETD Runge-Kutta approximations with order greater than 2 fail to maintain the MBP unconditionally since the higher-order interpolation polynomials contain negative coefficients, and this also happens to the mass-conserving Allen-Cahn equation \eqref{MAC3}. In addition to the Runge-Kutta type approach, multistep methods have also been widely used to design high-order schemes for gradient flow models, such as the third-order ETD multistep scheme   and the BDF3 scheme for the no-slope-selection thin film model \cite{wangc2,hao1}. However, the ETD multistep approach is based on the extrapolation for the nonlinear term. Due to the existence of negative coefficients, the extrapolation polynomials cannot be bounded by the maxima and minima of the extrapolated data, and thus the resulting ETD multistep schemes with order greater than 1 fail to unconditionally preserve the MBP  \cite{SAM1}. More recently, the integrating factor Runge-Kutta (IFRK)  method was considered for time integrartion of the classic Allen-Cahn equation in \cite{lixiao1,li3}, which successfully gives some high-order MBP-preserving schemes, thus it remains very interesting to apply them to the mass-conserving Allen-Cahn equation \eqref{MAC3}.
\end{remark}

\subsection{Fully discrete schemes}

In the following, we briefly discuss the fully-discrete ETD schemes corresponding to \eqref{etd1} and \eqref{etd2rk}, which are also unconditionally MBP preserving.
To this end, we recall the continuity of a function defined on a set $D\subset \mathbb{R}^d$ as \cite{Ru}:
\[
w \text{~is continuous at~} \mathbf{x}^*\in D\Longleftrightarrow \forall \, \mathbf{x}_k\rightarrow \mathbf{x}^* \text{~in~} D \text{~implies~} w(\mathbf{x}_k)\rightarrow w(\mathbf{x}^*).
\]
Thus, under the same theoretical framework,
the MBP property of the mass-conserving Allen-Cahn equation \eqref{MAC3} can be further extended to the case of finite-dimensional operators in space,
such as replacing $\Delta$ by its discrete approximation denoted by $\Delta_h$.
As shown in \cite{SAM1}, it is easy to verify that the central difference operator and lumped-mass finite element operator for spatial discretization of the Laplace operator $\Delta$ also satisfy Lemma \ref{lem-linear}.
In this case, $\Delta_h$ can be simply regarded as a square matrix
and generates a contraction semigroup $\{e^{\Delta_h t}\}_{t\geq0}$ on the subspace of $C(X)$
satisfying the periodic or homogeneous Neumann boundary condition,
where $X$ is the set of all spacial grid points (boundary and interior points).
The resulting space-discrete equation of \eqref{MAC3} with $\Delta$ replaced by $\Delta_h$
becomes an ordinary differential equation (ODE) system taking the same form:
\begin{equation*}\label{converse-disc}
u_t=\varepsilon^2\Delta_h u+\bar{f}[u],\quad \mathbf{x}\in X^*, t>0
\end{equation*}
with $u(\mathbf{x},0)=u_0(\mathbf{x})$ for any $\mathbf{x}\in X$,
where $X^*=X$ for the homogeneous Neumann boundary condition and $X^*=X\cap \overline{\Omega}^+$ with $\overline{\Omega}^+=\prod\limits_{i=1}^d(a_i,b_i]$ for the periodic boundary condition.

We present below the formulas of the fully-discrete ETD1 and ETDRK2 schemes,
which can be directly implemented for computation.
Let $\mathcal{L}_{\kappa,h}=\varepsilon^2\Delta_h-\kappa\mathcal{I}$ and define the $\phi$-functions as follows:
\begin{align*}
&\phi_0(a)=e^{a},\\
 &\phi_1(a)=\frac{e^{a}-1}{a},\\
  &\phi_2(a)=\frac{e^{a}-1-a}{a^2},
\end{align*}
for any $a\not=0$.
Then the  fully-discrete ETD1 scheme is given by
\begin{align*}
u^{n+1}=e^{\tau \mathcal{L}_{\kappa, h}} u^{n}+\int_{0}^{\tau} e^{(\tau-s) \mathcal{L}_{\kappa, h}}\mathcal{N}[u^{n}] \,\mathrm{d} s,
\end{align*}
or equivalently,
\begin{align}\label{etd1-fully}
u^{n+1}=\phi_0(\tau \mathcal{L}_{\kappa,h})u^n+\tau\phi_1(\tau \mathcal{L}_{\kappa,h})\mathcal{N}[u^n],
\end{align}
and the fully-discrete ETDRK2 scheme by
\begin{align*}
\left\{
\begin{array}{ll}
\widetilde{u}^{n+1}=e^{\tau  \mathcal{L}_{\kappa, h}} u^{n}+\int_{0}^{\tau} e^{(\tau-s)  \mathcal{L}_{\kappa, h}}\mathcal{N}\left[u^{n}\right] \,\mathrm{d} s,\\[2pt]
u^{n+1}=e^{\tau  \mathcal{L}_{\kappa, h}} u^{n}+\int_{0}^{\tau} e^{(\tau-s)  \mathcal{L}_{\kappa, h}}\left\{(1-\frac{s}{\tau})\mathcal{N}[u^n]+\frac{s}{\tau}\mathcal{N}[\tilde{u}^{n+1}]\right\} \,\mathrm{d} s,
\end{array}
\right.
\end{align*}
or equivalently,
\begin{align}\label{etd2rk-fully}
\left\{
\begin{array}{ll}
\widetilde{u}^{n+1}=\phi_0(\tau\mathcal{L}_{\kappa,h})u^n+\tau\phi_1(\tau\mathcal{L}_{\kappa,h})\mathcal{N}[u^n],\\[3pt]
u^{n+1}=\widetilde{u}^{n+1}+\tau\phi_2(\tau\mathcal{L}_{\kappa,h})\left(\mathcal{N}[\widetilde{u}^{n+1}]-\mathcal{N}[u^n]\right).
\end{array}
\right.
\end{align}

\section{Error estimates}

In the following, we carry out convergence analysis for the ETD schemes \eqref{etd1} and \eqref{etd2rk} in the space-continuous setting.
We first derive some useful results as follows.

\begin{lemma}\label{lem_lip}
Let $\gamma$ be any constant such that $|\Omega|> \gamma>0$. For any $\xi_1, \xi_2\in C(\overline\Omega)$ with $\|\xi_i\|\leq 1$ and  $\int_{\Omega}g(\xi_i(\bx,t))\,{\rm d}\bx\geq \gamma$ $(i=1,2)$,   we have
\begin{equation}
\|\lambda_{\xi_1}g(\xi_1)-\lambda_{\xi_2}g(\xi_2)\|\leq C_{\gamma}\|\xi_1-\xi_2\|,
\end{equation}
where $C_{\gamma}=\frac{4|\Omega|}{\gamma}+\frac{2|\Omega|^2}{\gamma^2}$.
\end{lemma}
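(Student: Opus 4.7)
The plan is to use the standard algebraic splitting
\[
\lambda_{\xi_1}g(\xi_1)-\lambda_{\xi_2}g(\xi_2) \;=\; \lambda_{\xi_1}\bigl(g(\xi_1)-g(\xi_2)\bigr)\;+\;(\lambda_{\xi_1}-\lambda_{\xi_2})\,g(\xi_2),
\]
which reduces the bound to (i) a pointwise Lipschitz estimate for $g$, (ii) a scalar bound for $|\lambda_{\xi_1}-\lambda_{\xi_2}|$, together with the uniform bounds $|\lambda_{\xi_i}|\le 1$ (Lemma \ref{lem-lambda}) and $\|g(\xi_i)\|\le1$.

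For the first summand, since $g(u)=1-u^2$ and $\|\xi_i\|\le 1$, I would write $g(\xi_1)-g(\xi_2)=-(\xi_1+\xi_2)(\xi_1-\xi_2)$ and take supremum norms to get $\|g(\xi_1)-g(\xi_2)\|\le 2\|\xi_1-\xi_2\|$. Combined with a bound on $|\lambda_{\xi_1}|$, this yields a contribution of the form $\text{const}/\gamma$ times $\|\xi_1-\xi_2\|$ (the relevant bound for $|\lambda_{\xi_1}|$ here being $|\lambda_{\xi_1}|\le |\int_\Omega f(\xi_1)|/\int_\Omega g(\xi_1)\le 2|\Omega|/\gamma$, since $|f|\le 2$ on $[-1,1]$), matching the first term $\tfrac{4|\Omega|}{\gamma}$ of $C_\gamma$.

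For the second summand, the delicate step is to bound $|\lambda_{\xi_1}-\lambda_{\xi_2}|$. I would place both ratios over the common denominator $\int_\Omega g(\xi_1)\int_\Omega g(\xi_2)$ and add and subtract $\int_\Omega f(\xi_2)\cdot\int_\Omega g(\xi_2)$ in the numerator to obtain
\[
\lambda_{\xi_1}-\lambda_{\xi_2}=\frac{\displaystyle\int_\Omega g(\xi_2)\cdot\int_\Omega\bigl(f(\xi_1)-f(\xi_2)\bigr)+\int_\Omega f(\xi_2)\cdot\int_\Omega\bigl(g(\xi_2)-g(\xi_1)\bigr)}{\displaystyle\int_\Omega g(\xi_1)\int_\Omega g(\xi_2)}.
\]
The denominator is bounded below by $\gamma^2$ using the hypothesis. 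In the numerator I would use the pointwise Lipschitz constants $|f'|,|g'|\le 2$ on $[-1,1]$ to get $\int_\Omega|f(\xi_1)-f(\xi_2)|\le 2|\Omega|\|\xi_1-\xi_2\|$ and similarly for $g$, together with $|\int_\Omega f(\xi_2)|,\int_\Omega g(\xi_2)\le|\Omega|$. This produces $|\lambda_{\xi_1}-\lambda_{\xi_2}|\le \tfrac{2|\Omega|^2}{\gamma^2}\|\xi_1-\xi_2\|$ (after collecting terms), and multiplying by $\|g(\xi_2)\|\le 1$ yields the second term of $C_\gamma$.

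The hard part is simply the bookkeeping: the final constant $C_\gamma=\tfrac{4|\Omega|}{\gamma}+\tfrac{2|\Omega|^2}{\gamma^2}$ is not the tightest one can write (one can instead obtain $2+\tfrac{4|\Omega|}{\gamma}$ by invoking Lemma \ref{lem-lambda} in the first summand), so care is needed when choosing which of the two available bounds on $|\lambda_{\xi_1}|$ to plug in so that the two contributions combine cleanly into the stated $C_\gamma$. No deeper obstacle is expected, since all ingredients—Lemma \ref{lem-lambda}, the $[-1,1]$-Lipschitz property of $f,g$, and the lower bound $\int_\Omega g(\xi_i)\ge\gamma$—are at hand.
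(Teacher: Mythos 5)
Your strategy is sound and is, at bottom, the same telescoping-plus-Lipschitz argument the paper uses; the only real difference is how the telescoping is grouped. The paper splits $\lambda_{\xi_1}g(\xi_1)-\lambda_{\xi_2}g(\xi_2)$ into three terms, changing $\int_\Omega f$, then the pointwise factor $g$, then $\int_\Omega g$ one at a time, so that exactly one term carries two denominator factors and hence the single $\tfrac{2|\Omega|^2}{\gamma^2}$ contribution, while the other two each give $\tfrac{2|\Omega|}{\gamma}$. You instead factor out $\lambda_{\xi_1}$ first and then telescope inside $\lambda_{\xi_1}-\lambda_{\xi_2}$. Both groupings work, but your bookkeeping for the second summand is off by a factor of two: with the ingredients you list ($\int_\Omega g(\xi_2)\le|\Omega|$, $|\int_\Omega f(\xi_2)|\le|\Omega|$, Lipschitz constant $2$ for $f$ and $g$, denominator $\ge\gamma^2$), each of the two numerator terms contributes $2|\Omega|^2\|\xi_1-\xi_2\|$, so you obtain $|\lambda_{\xi_1}-\lambda_{\xi_2}|\le\tfrac{4|\Omega|^2}{\gamma^2}\|\xi_1-\xi_2\|$ rather than the claimed $\tfrac{2|\Omega|^2}{\gamma^2}\|\xi_1-\xi_2\|$; paired with your first-summand bound $\tfrac{4|\Omega|}{\gamma}$ this overshoots $C_\gamma$. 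The fix is exactly the choice you flagged as delicate: cancel the factor $\int_\Omega g(\xi_2)$ in the first numerator term against the denominator (so that piece only costs $\tfrac{2|\Omega|}{\gamma}$, giving $|\lambda_{\xi_1}-\lambda_{\xi_2}|\le(\tfrac{2|\Omega|}{\gamma}+\tfrac{2|\Omega|^2}{\gamma^2})\|\xi_1-\xi_2\|$) and use $|\lambda_{\xi_1}|\le1$ from Lemma \ref{lem-lambda} in the first summand; the total $2+\tfrac{2|\Omega|}{\gamma}+\tfrac{2|\Omega|^2}{\gamma^2}$ is then dominated by $C_\gamma$ precisely because $\gamma<|\Omega|$. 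So the proof goes through (indeed with a slightly better constant), but the displayed intermediate bound $\tfrac{2|\Omega|^2}{\gamma^2}$ for $|\lambda_{\xi_1}-\lambda_{\xi_2}|$ is not justified as written and should be replaced by one of these corrected estimates.
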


\begin{proof}
We first have for any $\bx\in\overline\Omega$,
\begin{align*}
& \lambda_{\xi_1}g(\xi_1(\bx)) - \lambda_{\xi_2}g(\xi_2((\bx))
= \frac{\int_\Omega f(\xi_1(\by))\,{\rm d}\by}{\int_\Omega g(\xi_1(\by))\,{\rm d}\by} g(\xi_1(\bx))
- \frac{\int_\Omega f(\xi_2(\by))\,{\rm d}\by}{\int_\Omega g(\xi_2(\by))\,{\rm d}\by} g(\xi_2(\bx)) \\
& \qquad = \bigg(\int_\Omega f(\xi_1(\by))-f(\xi_2(\by))\,{\rm d}\by\bigg)
\frac{g(\xi_1(\bx))}{\int_\Omega g(\xi_1(\by))\,{\rm d}\by}
+ \int_\Omega f(\xi_2(\by))\,{\rm d}\by
\bigg(\frac{g(\xi_1(\bx))}{\int_\Omega g(\xi_1(\by))\,{\rm d}\by}
- \frac{g(\xi_2({\bx}))}{\int_\Omega g(\xi_2(\by))\,{\rm d}\by}\bigg) \\
& \qquad = \bigg(\int_\Omega f(\xi_1(\by))-f(\xi_2(\by))\,{\rm d}\by\bigg)
\frac{g(\xi_1(\bx))}{\int_\Omega g(\xi_1(\by))\,{\rm d}\by}
+ \bigg(\int_\Omega f(\xi_2(\by))\,{\rm d}\by\bigg)
\frac{g({\xi_1}(\bx))-g({\xi_2}(\bx))}{\int_\Omega g(\xi_2(\by))\,{\rm d}\by} \\
& \qquad\quad + \int_\Omega f(\xi_2(\by))\,{\rm d}\by
\bigg(g(\xi_1(\bx))
\frac{\int_\Omega g(\xi_2(\by))- g(\xi_1(\by)){\rm d}\by}
{\int_\Omega g(\xi_1(\by))\,{\rm d}\by\int_\Omega g(\xi_2(\by))\,{\rm d}\by}\bigg) \\
& \qquad =: I_1+I_2+I_3.
\end{align*}
Notice that $|g(\xi(\bx))|\leq 1$,  $|f(\xi(\bx))|\leq 1$, $|g'(\xi(\bx))|\leq 2$ and $|f'(\xi(\bx))|\leq 2$ for any $\xi\in C(\overline\Omega)$
with $\|\xi\|\leq 1$, then we get
\begin{align*}
&|I_1|\leq \frac{1}{\gamma}|g(\xi_1(\bx))|\int_\Omega \left|f(\xi_1(\by))- f(\xi_2(\by))\right|\,{\rm d}\by\leq \frac{2|\Omega|}{\gamma}\|\xi_1-\xi_2\|,\\
& |I_2|\leq \frac{|\Omega|}{\gamma}\|f(\xi_2)\|\left| g(\xi_2(\bx))- g(\xi_1(\bx))\right|\leq \frac{2|\Omega|}{\gamma}\|\xi_1-\xi_2\|,\\
&|I_3|\leq \frac{|\Omega|}{\gamma^2}\|f(\xi_2)\| |g(\xi_1(\bx))|\int_\Omega \left|g(\xi_2(\by))- g(\xi_1(\by))\right|{\rm d}\by\leq \frac{2|\Omega|^2}{\gamma^2}\|\xi_1-\xi_2\|.
\end{align*}
By combining  the above results, we obtain
\begin{equation}
\|\lambda_{\xi_1}g(\xi_1)-\lambda_{\xi_2}g(\xi_2)\|\leq \left(\frac{4|\Omega|}{\gamma}+\frac{2|\Omega|^2}{\gamma^2}\right)\|\xi_1-\xi_2\|,
\end{equation}
which completes the proof.
\end{proof}

\begin{lemma}\label{lem-nonlinear-lip}
Suppose that the requirement \eqref{coef} holds and let $\gamma$ be any constant such that $|\Omega|> \gamma>0$. For any $\xi_1, \xi_2\in C(\overline\Omega)$ with $\|\xi_i\|\leq 1$ and  $\int_{\Omega}g(\xi_i(\bx,t))\,{\rm d}\bx\geq \gamma$ $(i=1,2)$, we have
\begin{equation}
\|\mathcal{N}[\xi_1]-\mathcal{N}[\xi_2]\|\leq C^*_{\gamma} \kappa\|\xi_1-\xi_2\|,
\end{equation}
where $C^*_{\gamma}=\frac32+\frac{C_{\gamma}}{4}$.
\end{lemma}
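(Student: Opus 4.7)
The plan is to write $\mathcal{N}[\xi_1] - \mathcal{N}[\xi_2]$ as a sum of three pieces using the definition $\mathcal{N}[\xi] = \kappa\xi + f(\xi) - \lambda_\xi g(\xi)$, and then bound each piece separately in the supremum norm via the triangle inequality:
\begin{equation*}
\|\mathcal{N}[\xi_1]-\mathcal{N}[\xi_2]\| \leq \kappa\|\xi_1-\xi_2\| + \|f(\xi_1)-f(\xi_2)\| + \|\lambda_{\xi_1}g(\xi_1)-\lambda_{\xi_2}g(\xi_2)\|.
\end{equation*}

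For the first term there is nothing to do. For the second term, I would use that $f(u)=u-u^3$ has $f'(u)=1-3u^2$ with $|f'(u)|\leq 2$ on $[-1,1]$, so that the mean value inequality gives $\|f(\xi_1)-f(\xi_2)\|\leq 2\|\xi_1-\xi_2\|$. For the third term I would invoke Lemma \ref{lem_lip} directly, which applies since $\|\xi_i\|\leq 1$ and $\int_\Omega g(\xi_i)\,{\rm d}\bx\geq \gamma$, giving the bound $C_\gamma\|\xi_1-\xi_2\|$.

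Combining these estimates yields $\|\mathcal{N}[\xi_1]-\mathcal{N}[\xi_2]\| \leq (\kappa + 2 + C_\gamma)\|\xi_1-\xi_2\|$, and the remaining step is to turn this into the desired form $C^*_\gamma\kappa\|\xi_1-\xi_2\|$ with $C^*_\gamma=\frac{3}{2}+\frac{C_\gamma}{4}$. This is exactly where the stabilizing condition \eqref{coef} enters: since $\kappa\geq 4$, we have $2\leq \kappa/2$ and $C_\gamma\leq \kappa C_\gamma/4$, so
\begin{equation*}
\kappa + 2 + C_\gamma \leq \kappa + \tfrac{\kappa}{2} + \tfrac{\kappa C_\gamma}{4} = \Bigl(\tfrac{3}{2}+\tfrac{C_\gamma}{4}\Bigr)\kappa = C^*_\gamma\kappa,
\end{equation*}
which closes the estimate.

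There is no serious obstacle here; the lemma is essentially a Lipschitz-type accounting built on Lemma \ref{lem_lip}. The only subtlety is the final packaging of the constants so that the bound scales with $\kappa$ — this is a somewhat cosmetic choice that will be convenient later in the error-estimate section, and it requires nothing more than using $\kappa\geq 4$ to absorb the additive constants $2$ and $C_\gamma$ into multiples of $\kappa$.
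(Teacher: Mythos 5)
Your proof is correct and follows essentially the same route as the paper: the same three-term triangle-inequality split, the bound $|f'|\leq 2$ on $[-1,1]$, Lemma \ref{lem_lip} for the Lagrange-multiplier term, and the use of $\kappa\geq 4$ to absorb $2+C_\gamma$ into $\left(\frac12+\frac{C_\gamma}{4}\right)\kappa$. No issues.
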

\begin{proof}
It is easy to check that for any $\bx\in\overline\Omega$,
\begin{align*}
|\mathcal{N}[\xi_1](\bx)-\mathcal{N}[\xi_2](\bx)|&= |\kappa(\xi_1(\bx)-\xi_2
(\bx))+(f(\xi_1(\bx))-f(\xi_2(\bx))-(\lambda_{\xi_1} g(\xi_1(\bx))-\lambda_{\xi_2} g(\xi_2(\bx)))|\\
&\leq\kappa|\xi_1(\bx)-\xi_2(\bx)|+|f(\xi_1(\bx))-f(\xi_2(\bx))|+|\lambda_{\xi_1}g(\xi_1(\bx))-\lambda_{\xi_2}g(\xi_2(\bx))|\\
&\leq (\kappa+2+C_{\gamma})\|\xi_1-\xi_2\|\\
&\leq\textstyle \left( \frac32+\frac{C_{\gamma}}{4}\right)\kappa\|\xi_1-\xi_2\|,
\end{align*}
where we have used Lemma \ref{lem_lip} and the requirement  $\kappa\geq 4$. The proof is completed.
\end{proof}

Next, we study the convergence for the ETD schemes \eqref{etd1} and \eqref{etd2rk}. Let $T>0$ be a given fixed terminal time.
For any  $u\in C([0,T];C(\overline\Omega))$ with $\|u(t)\|\leq 1$ and $\int_{\Omega} u(\bx,t)\,{\rm d}\bx= M_0\ne |\Omega|$ for any $t\in[0,T]$,
there always exists a constant $\gamma_u>0$ such that $\int_{\Omega} g(u(\bx,t))\,{\rm d}\bx\geq \gamma_u$ for any $t\in[0,T]$
due to the continuity and boundedness of $u$.

\begin{theorem}[Error estimate of the ETD1 scheme]
\label{thm-etd1-conv}
Suppose that  the requirement \eqref{coef} holds and $\|u_0\|\leq 1$ with $|M_0|\ne |\Omega|$.
Assume that the exact solution $u$ to the model problem \eqref{MAC3} belongs to $C^1([0,T];C(\overline\Omega))$ and let
$\{u^n\in C(\overline\Omega)\}_{n\geq0}$ be the approximate solution generated by the ETD1 scheme \eqref{etd1}. Furthermore, we also assume that there exists a constant $\gamma_d>0$ such that $\int_{\Omega} g(u^n(\bx))\,{\rm d}\bx\geq \gamma_d$ for any $n$ with $n\tau\leq T$ and define $\gamma=\min(\gamma_u,\gamma_d)$. Then we have
\begin{align}\label{con1}
\|u(t_n)-u^n\|\leq {\textstyle\frac{C^*_{\gamma}}{C^*_\gamma-1}}Ce^{(C^*_{\gamma}-1)\kappa t_n}\tau,\quad \forall \,t_n\leq T,
\end{align}
for any $\tau>0$, where the constant $C>0$ is independent of $\tau$, $\gamma$ and $\kappa$.
\end{theorem}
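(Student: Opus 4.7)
The plan is to derive a standard one-step error recursion for $\|e^n\|:=\|u(t_n)-u^n\|$ in the supremum norm, following the usual ETD convergence analysis. First I would apply the variation-of-constants formula to \eqref{stab-converse} and \eqref{etd1} to write
\begin{align*}
u(t_{n+1}) &= e^{\tau\mathcal{L}_\kappa}u(t_n)+\int_0^\tau e^{(\tau-s)\mathcal{L}_\kappa}\mathcal{N}[u(t_n+s)]\,{\rm d}s,\\
u^{n+1} &= e^{\tau\mathcal{L}_\kappa}u^n+\int_0^\tau e^{(\tau-s)\mathcal{L}_\kappa}\mathcal{N}[u^n]\,{\rm d}s,
\end{align*}
and subtract. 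The integrand on the right becomes $\mathcal{N}[u(t_n+s)]-\mathcal{N}[u^n]$, which I would split into the temporal piece $\mathcal{N}[u(t_n+s)]-\mathcal{N}[u(t_n)]$ and the spatial error piece $\mathcal{N}[u(t_n)]-\mathcal{N}[u^n]$.

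Next I would apply Lemma~\ref{lem-nonlinear-lip} to both pieces. This is where the hypotheses pay off: for the approximate solution the MBP of Theorem~\ref{thm-etd1-mbp} gives $\|u^n\|\le 1$ and the assumption on $\int_\Omega g(u^n)$ gives the lower bound by $\gamma_d\ge\gamma$; for the exact solution the continuous MBP of \eqref{MAC3} and the hypothesis on $\int_\Omega g(u)$ give the analogous lower bound $\gamma_u\ge\gamma$. Combined with the $C^1$-regularity assumption on $u$, the temporal piece is controlled by $C^*_\gamma\kappa\|u(t_n+s)-u(t_n)\|\le C^*_\gamma\kappa C s$ with $C=\|\partial_t u\|_{L^\infty(0,T;C(\overline\Omega))}$, while the spatial piece is controlled by $C^*_\gamma\kappa\|e^n\|$.

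Taking norms, using the contraction semigroup bound $\|e^{t\mathcal{L}_\kappa}\|\le e^{-\kappa t}$ from Lemma~\ref{lem-linear}, and computing $\int_0^\tau e^{-\kappa(\tau-s)}\,{\rm d}s=(1-e^{-\kappa\tau})/\kappa$ and $\int_0^\tau e^{-\kappa(\tau-s)}s\,{\rm d}s\le\tau(1-e^{-\kappa\tau})/\kappa$, I arrive at the recursion
\begin{align*}
\|e^{n+1}\|\le a\|e^n\|+b,\qquad a=1+(C^*_\gamma-1)(1-e^{-\kappa\tau}),\quad b\le C^*_\gamma C\tau(1-e^{-\kappa\tau}).
\end{align*}
Iterating from $e^0=0$ gives $\|e^n\|\le b(a^n-1)/(a-1)$, and using $a\le e^{a-1}$ together with $1-e^{-\kappa\tau}\le\kappa\tau$ yields $a^n\le e^{(C^*_\gamma-1)\kappa t_n}$. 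Dividing $b$ by $a-1=(C^*_\gamma-1)(1-e^{-\kappa\tau})$ cancels the $1-e^{-\kappa\tau}$ factor and produces the stated bound, with $C$ indeed independent of $\tau$, $\gamma$, and $\kappa$.

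The main subtlety, in my view, is the careful tracking of the $\kappa$-dependence: the Lipschitz constant in Lemma~\ref{lem-nonlinear-lip} contains a factor of $\kappa$, so naive estimates would leave a $\kappa$ multiplying $\tau$ outside the exponential. The cancellation that makes $C$ $\kappa$-independent comes precisely from choosing the bound $\int_0^\tau e^{-\kappa(\tau-s)}s\,{\rm d}s\le\tau(1-e^{-\kappa\tau})/\kappa$ and then letting the $1-e^{-\kappa\tau}$ factor in the numerator $b$ be absorbed by the $(1-e^{-\kappa\tau})$ in $a-1$. The remaining minor point is verifying that the hypotheses for Lemma~\ref{lem-nonlinear-lip} apply uniformly in $s\in[0,\tau]$ for $u(t_n+s)$, which follows from the continuity in time of $u$ and the definition $\gamma=\min(\gamma_u,\gamma_d)$.
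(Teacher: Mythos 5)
Your proposal is correct and follows essentially the same route as the paper: the variation-of-constants difference, the split of the nonlinear mismatch into the truncation piece $\mathcal{N}[u(t_n+s)]-\mathcal{N}[u(t_n)]$ and the propagated-error piece $\mathcal{N}[u(t_n)]-\mathcal{N}[u^n]$, the Lipschitz bound of Lemma~\ref{lem-nonlinear-lip} justified via the discrete and continuous MBPs together with the $\gamma$-lower bounds, the contraction estimate $\|e^{t\mathcal{L}_\kappa}\|\leq e^{-\kappa t}$, and the discrete Gronwall iteration with the $\kappa$-cancellation coming from dividing by $a-1$. The only (harmless) difference is that you retain the factor $s$ in the truncation term and keep $1-e^{-\kappa\tau}$ exact before cancelling, whereas the paper bounds $s\leq\tau$ and applies $1-e^{-\kappa\tau}\leq\kappa\tau$ first; both land on the identical final estimate.
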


\begin{proof}
Let  $e_{1}^{n}=u^{n}-u(t_{n})$.
The difference between \eqref{etd1}  and \eqref{etd} yields
\begin{align} \label{etd2rk5}
  e_{1}^{n+1}=\mathrm{e}^{\mathcal{L}_\kappa \tau}  e_{1}^{n}+\int_{0}^{\tau} \mathrm{e}^{\mathcal{L}_\kappa(\tau-s)}\left\{\mathcal{N}[u^{n}]-\mathcal{N}[u(t_{n})]+R_{1}(s)\right\} \,\mathrm{d} s,
\end{align}
where  $R_{1}(s)$  is the truncation error as
$$R_{1}(s)=\mathcal{N}[u(t_{n})]-\mathcal{N}[u(t_{n}+s)], \quad s \in[0, \tau].$$
By the MBP property of $u$ and  Lemma \ref{lem-nonlinear-lip}, we have
$$ \|R_{1}(s)\|=\|\mathcal{N}[u(t_{n})]-\mathcal{N}[u(t_{n}+s)]\|
\leq C^*_{\gamma}{\kappa} \|u(t_{n})-u(t_{n}+s)\| \leq C_{1} C^*_{\gamma} \kappa\tau, \quad \forall s \in[0, \tau],$$
where   the constant  $C_{1}$  depends on the $C^{1}([0, T]; C(\overline\Omega)$ norm of $u$,  but independent of  $\tau$  and  $\kappa$.  Similarly, since $\left\|u^{n}\right\| \leq 1$ due to Theorem  \ref{thm-etd1-mbp},  we also obtain by Lemma \ref{lem-nonlinear-lip} that
\begin{align}\label{con33}
  \|\mathcal{N}[u^{n}]-\mathcal{N}[u(t_{n})]\| \leq C^*_{\gamma}\kappa\|u^{n}-u(t_{n})\|=C^*_{\gamma}\kappa \left\|e_{1}^{n}\right\|.
\end{align}
Then, we derive from \eqref{etd2rk5} and Lemma \ref{lem-linear} that
\begin{align}\label{etd1con}\nonumber
\|e_{1}^{n+1}\| \leq \,& \mathrm{e}^{-\kappa \tau}\|e_{1}^{n}\|+\int_{0}^{\tau} \mathrm{e}^{-\kappa(\tau-s)}\left\{\|\mathcal{N}[u^{n}]-\mathcal{N}[u(t_{n})]\|+\|R_{1}(s)\|\right\} \,\mathrm{d} s \\ \nonumber
 \leq \,& \mathrm{e}^{-\kappa \tau}\|e_{1}^{n}\|+C^*_{\gamma}\kappa\left(\|e_{1}^{n}\|+C_{1} \tau\right) \int_{0}^{\tau} \mathrm{e}^{-\kappa(\tau-s)} \,\mathrm{d} s \\ \nonumber
 =\,& \mathrm{e}^{-\kappa \tau}\|e_{1}^{n}\|+\frac{1-\mathrm{e}^{-\kappa \tau}}{\kappa}C^*_{\gamma}\kappa\left(\|e_{1}^{n}\|+C_{1} \tau\right) \\ \nonumber
 =\,&(C^*_{\gamma}-(C^*_{\gamma}-1)\mathrm{e}^{-\kappa \tau})\|e_{1}^{n}\|+\frac{1-\mathrm{e}^{-\kappa \tau}}{\kappa \tau}  C^*_{\gamma}C_{1} \kappa \tau^{2} \\
  \leq\; &(1+(C^*_{\gamma}-1)\kappa \tau)\|e_{1}^{n}\|+C^*_{\gamma}C_{1} \kappa \tau^{2},
\end{align}
where in the last step we have used the fact that  $1-\mathrm{e}^{-a} \leq a$ for any  $a>0$. By  induction, we have
\begin{align*}
\|e_{1}^{n}\| & \leq(1+(C^*_{\gamma}-1)\kappa \tau)^{n}\|e_{1}^{0}\|+C^*_{\gamma}C_{1} \kappa \tau^{2} \sum_{k=0}^{n-1}(1+(C^*_{\gamma}-1)\kappa \tau)^{k} \\
  &=(1+(C^*_{\gamma}-1)\kappa \tau)^{n}\|e_{1}^{0}\|+\textstyle\frac{C^*_{\gamma}C_{1}}{C^*_{\gamma}-1} \tau[(1+(C^*_{\gamma}-1)\kappa \tau)^{n}-1] \\
  & \leq \mathrm{e}^{(C^*_{\gamma}-1)\kappa n \tau}\|e_{1}^{0}\|+\textstyle\frac{C^*_{\gamma}}{C^*_\gamma-1}C_{1} \mathrm{e}^{(C^*_{\gamma}-1)\kappa n \tau} \tau.
\end{align*}
Finally we obtain (\ref{con1}) by letting $C=C_1$ since  $e_{1}^{0}=0$  and  $n \tau=t_{n}$.
\end{proof}

\begin{theorem}[Error estimate of the ETDRK2 scheme]
\label{thm-etd2rk-conv}
Suppose that  the requirement \eqref{coef} holds and $\|u_0\|\leq 1$ with $|M_0|\ne |\Omega|$.
Assume that the exact solution $u$ to the model problem \eqref{MAC3} belongs to $C^2([0,T];C(\overline\Omega))$ and let
$\{u^n\in C(\overline\Omega)\}_{n\geq0}$ be the approximate solution generated by the ETD2 scheme \eqref{etd2rk}. Furthermore, we also assume that there exists a constant $\gamma_d>0$ such that $\int_{\Omega} g(u^n(\bx))\,{\rm d}\bx\geq \gamma_d$ for any $n$ with $n\tau\leq T$ and define $\gamma=\min(\gamma_u,\gamma_d)$. Then we have
\begin{align}\label{etd2conver}
\|u(t_n)-u^n\|\leq Ce^{(C^*_\gamma-1)\kappa t_n}\tau^2, \quad \forall \,t_n\leq T,
\end{align}
for any $\tau>0$, where the constant $C>0$ is independent of $\tau$.
\end{theorem}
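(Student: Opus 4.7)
The plan is to emulate the analysis of the ETD1 scheme in Theorem \ref{thm-etd1-conv}, but with an additional Runge--Kutta layer. Setting $e_2^n := u^n - u(t_n)$ and subtracting the variation-of-constants formula for the exact solution,
\[
u(t_{n+1}) = e^{\tau\mathcal{L}_\kappa} u(t_n) + \int_0^\tau e^{(\tau-s)\mathcal{L}_\kappa}\mathcal{N}[u(t_n+s)]\,\mathrm{d}s,
\]
from the integral form \eqref{th235mbp} of the ETDRK2 step yields the error recursion
\[
e_2^{n+1} = e^{\tau\mathcal{L}_\kappa} e_2^n + \int_0^\tau e^{(\tau-s)\mathcal{L}_\kappa}\Big[\bigl(1-\tfrac{s}{\tau}\bigr)\bigl(\mathcal{N}[u^n]-\mathcal{N}[u(t_n)]\bigr) + \tfrac{s}{\tau}\bigl(\mathcal{N}[\tilde{u}^{n+1}]-\mathcal{N}[u(t_{n+1})]\bigr) + R_2(s)\Big]\,\mathrm{d}s,
\]
where $R_2(s) := \bigl(1-\tfrac{s}{\tau}\bigr)\mathcal{N}[u(t_n)] + \tfrac{s}{\tau}\mathcal{N}[u(t_{n+1})] - \mathcal{N}[u(t_n+s)]$ is the linear-interpolation remainder of $\Phi(s) := \mathcal{N}[u(t_n+s)]$.

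Next I would estimate the three contributions separately. For $R_2$, observe that $t\mapsto \mathcal{N}[u(t)]$ is $C^2$ into $C(\overline\Omega)$: indeed $u\in C^2$, the nonlinearities $f,g$ are smooth, and the assumption $\int_\Omega g(u(\cdot,t))\,\mathrm{d}\bx \geq \gamma_u > 0$ together with the quotient rule makes $\lambda_{u(t)}$ twice continuously differentiable in $t$. A standard Peano-type estimate for linear interpolation then gives $\|R_2(s)\|\leq C_2\tau^2$ uniformly on $[0,\tau]$, and pairing with the contraction semigroup bound (Lemma \ref{lem-linear}) produces an $O(\tau^3)$ local contribution. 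The first nonlinear difference is handled directly by Lemma \ref{lem-nonlinear-lip}: $\|\mathcal{N}[u^n]-\mathcal{N}[u(t_n)]\|\leq C^*_\gamma\kappa\|e_2^n\|$. For the middle term, $\tilde{u}^{n+1}$ is one ETD1 step starting from $u^n$, so repeating the one-step computation from Theorem \ref{thm-etd1-conv} (with the Lipschitz estimate of Lemma \ref{lem-nonlinear-lip} applied to $\mathcal{N}[u(t_n)]-\mathcal{N}[u(t_n+s)]$) gives
\[
\|\tilde{u}^{n+1}-u(t_{n+1})\|\leq \bigl(1+(C^*_\gamma-1)(1-e^{-\kappa\tau})\bigr)\|e_2^n\| + \tfrac{1}{2}C^*_\gamma C_1 \kappa\tau^2,
\]
and Lemma \ref{lem-nonlinear-lip} then bounds $\|\mathcal{N}[\tilde{u}^{n+1}]-\mathcal{N}[u(t_{n+1})]\|$ by $C^*_\gamma\kappa$ times this quantity.

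Finally I would combine everything using Lemma \ref{lem-linear} together with the scalar integrals $\alpha_1 := \int_0^\tau e^{-(\tau-s)\kappa}(1-s/\tau)\,\mathrm{d}s$ and $\alpha_2 := \int_0^\tau e^{-(\tau-s)\kappa}(s/\tau)\,\mathrm{d}s$, which satisfy $\kappa(\alpha_1+\alpha_2)=1-e^{-\kappa\tau}$ and $\kappa\alpha_2\leq 1-e^{-\kappa\tau}\leq\kappa\tau$. Substituting the bound on $\|\tilde{u}^{n+1}-u(t_{n+1})\|$ into the middle term and repeatedly using $1-e^{-a}\leq a$ as in the proof of Theorem \ref{thm-etd1-conv}, I expect to arrive at a recursion of the form
\[
\|e_2^{n+1}\|\leq \bigl(1 + (C^*_\gamma-1)\kappa\tau + O((\kappa\tau)^2)\bigr)\|e_2^n\| + C\tau^3.
\]
Discrete Gronwall, exactly as in the final induction step of Theorem \ref{thm-etd1-conv}, combined with $e_2^0=0$, then delivers \eqref{etd2conver}. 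The main obstacle is the bookkeeping in this last combination: one must verify that the cross-contribution created by substituting $\|\tilde{u}^{n+1}-u(t_{n+1})\|$ into the $\tfrac{s}{\tau}$-weighted integral produces only strict higher-order perturbations $O((\kappa\tau)^2)\|e_2^n\|$ and $O(\kappa\tau^3)$, and in particular does not inflate the leading Gronwall coefficient beyond $(C^*_\gamma-1)\kappa$, so that the resulting exponential factor matches the one in the ETD1 estimate \eqref{con1}.
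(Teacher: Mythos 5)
Your proposal follows essentially the same route as the paper's proof: the same error recursion with the linear-interpolation remainder $R_2(s)$, the same use of Lemma \ref{lem-nonlinear-lip} together with the one-step ETD1 error bound for $\|\tilde{u}^{n+1}-u(t_{n+1})\|$, and the same Gronwall-type induction. The bookkeeping concern you flag is resolved exactly as you predict: the paper's explicit evaluation of the two exponential integrals and the bound $1-a\le e^{-a}\le 1-a+a^2/2$ yield the recursion coefficient $1+(C^*_\gamma-1)\kappa\tau+\tfrac12(C^*_\gamma-1)^2(\kappa\tau)^2$, so the leading Gronwall rate remains $(C^*_\gamma-1)\kappa$.
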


\begin{proof}
The proof strategy is quite similar to that for   the  ETD1 scheme. Let
$e_{2}^{n}=u^{n}-u\left(t_{n}\right)$,  then we have
\begin{align} \label{con22}
 e_{2}^{n+1}=\;& \mathrm{e}^{\mathcal{L}_\kappa \tau} e_{2}^{n}+\int_{0}^{\tau} \mathrm{e}^{\mathcal{L}_\kappa(\tau-s)} \left\{\left(1-\frac{s}{\tau}\right)\left(\mathcal{N}[u^{n}]-\mathcal{N}[u(t_{n})]\right)+\frac{s}{\tau}\left(\mathcal{N}[\tilde{u}^{n+1}]-\mathcal{N}[u(t_{n+1})]\right)+R_{2}(s)\right\} \,\mathrm{d} s,
\end{align}
where $R_{2}(s)$ is the truncation error given by
$$R_{2}(s)=\left(1-\frac{s}{\tau}\right) \mathcal{N}[u(t_{n})]+\frac{s}{\tau} \mathcal{N}[u(t_{n+1})]-\mathcal{N}[u(t_{n}+s)], \quad s \in[0, \tau]. $$
Using the estimation of the linear interpolation, we have
$$\|R_{2}(s)\|\leq C_{2}  \tau^{2}, \quad \forall \,s \in[0, \tau],$$
where the constant  $C_{2}$ depends on the  $C^{2}([0, T], C(\overline\Omega))$ norm of  $u$ and $\gamma$ but is independent of $\tau$.
From the last inequality in \eqref{etd1con}, we know
$$\|\tilde{u}^{n+1}-u(t_{n+1})\| \leq
(1+(C^*_{\gamma}-1)\kappa \tau)\|u^{n}-u(t_{n})\|+C^*_{\gamma}C_{1} \kappa \tau^{2}.$$
By combining the above inequality with  Theorem \ref{thm-etd2rk-mbp}  and Lemma \ref{lem-nonlinear-lip},  we have, for any  $s \in[0, \tau],$
\begin{align*}
&\left\|\left(1-\frac{s}{\tau}\right)\left(\mathcal{N}[u^{n}]-\mathcal{N}[u(t_{n})]\right)+\frac{s}{\tau}\left(\mathcal{N}[\tilde{u}^{n+1}]-\mathcal{N}[u(t_{n+1})]\right)\right\| \\
&\qquad \leq C^*_\gamma\kappa\left(\left(1-\frac{s}{\tau}\right) \left\|e_{2}^{n}\right\|+\frac{s}{\tau}\left((1+(C^*_\gamma-1)\kappa \tau)\left\|e_{2}^{n}\right\|+C^*_\gamma C_{1} \kappa \tau^{2}\right)\right) \\
& \qquad =C^*_\gamma\kappa\|e_2^n\|+C^*_\gamma(C^*_\gamma-1)\kappa^2 s\|e_2^n\|+{C^*_\gamma}^2C_1\kappa^2\tau s.
\end{align*}
Then, we obtain from \eqref{con22} and Lemma \ref{lem-linear} that
\begin{align*}
\begin{aligned}
\|e_{2}^{n+1}\| \leq\;& \mathrm{e}^{-\kappa \tau}\|e_{2}^{n}\|+\int_{0}^{\tau} \mathrm{e}^{-\kappa(\tau-s)}\left\{C^*_\gamma\kappa\|e_2^n\|+C^*_\gamma(C^*_\gamma-1)\kappa^2 s\|e_2^n\|+{C^*_\gamma}^2C_1\kappa^2\tau s+C_{2} \tau^{2}\right\} \,\mathrm{d} s\\
  =\,& \mathrm{e}^{-\kappa \tau}\|e_{2}^{n}\|+\left(C^*_\gamma \kappa\left\|e_{2}^{n}\right\|+C_{2}\tau^{2}\right) \int_{0}^{\tau} \mathrm{e}^{-\kappa(\tau-s)} \,\mathrm{d} s+\left(C^*_\gamma(C^*_\gamma-1) \kappa^{2}\|e_{2}^{n}\|+{C^*_\gamma}^2 C_{1} \kappa^{2} \tau\right) \int_{0}^{\tau} s \mathrm{e}^{-\kappa(\tau-s)} \,\mathrm{d} s \\
  =\,& \mathrm{e}^{-\kappa \tau}\left\|e_{2}^{n}\right\|+\frac{1-\mathrm{e}^{-\kappa \tau}}{\kappa}\left(C^*_\gamma \kappa\|e_{2}^{n}\|+C_{2}\tau^{2}\right)+\frac{\mathrm{e}^{-\kappa \tau}-1+\kappa \tau}{\kappa^{2}}\left(C^*_\gamma(C^*_\gamma-1) \kappa^{2}\|e_{2}^{n}\|+{C^*_\gamma}^2 C_{1} \kappa^{2} \tau\right) \\
  =\,&\left((C^*_\gamma-1)^2\mathrm{e}^{-\kappa \tau}+C^*_\gamma(C^*_\gamma-1) \kappa \tau-C^*_\gamma(C^*_\gamma-2)\right)\|e_{2}^{n}\|+\frac{1-\mathrm{e}^{-\kappa \tau}}{\kappa}C_{2} \tau^{2}+\frac{\mathrm{e}^{-\kappa \tau}-1+\kappa \tau}{\kappa^{2}} \cdot {C^*_\gamma}^2C_1 \kappa^{2} \tau \\
  \leq \,&\left(1+(C^*_\gamma-1)\kappa \tau+\frac12{(C^*_\gamma-1)^2}(\kappa \tau)^{2}\right)\|e_{2}^{n}\|+\left(C_{2}+\frac12{C^*_\gamma}C_{1} \kappa^{2}\right) \tau^{3},
  \end{aligned}
\end{align*}
where we have used the inequality $1-a \leq \mathrm{e}^{-a} \leq 1-a+\frac{a^{2}}{2}$  for any  $a>0$.
By induction, we obtain
\begin{align*}
\|e_{2}^{n}\| & \leq\left(1+(C^*_\gamma-1)\kappa \tau+\frac12{(C^*_\gamma-1)^2}(\kappa \tau)^{2}\right)^{n}\|e_{2}^{0}\|\\
&\quad+\left(\frac12{C^*_\gamma}C_{1} \kappa^{2}+C_{2} \right) \tau^{3} \sum_{k=0}^{n-1}\left(1+(C^*_\gamma-1)\kappa \tau+\frac12{(C^*_\gamma-1)^2}(\kappa \tau)^{2}\right)^{k} \\
  & \leq\left(1+(C^*_\gamma-1)\kappa \tau+\frac12{(C^*_\gamma-1)^2}(\kappa \tau)^{2}\right)^{n}\|e_{2}^{0}\|\\
  &\quad+\left(\frac{C^*_\gamma}{2(C^*_\gamma-1)}C_{1} \kappa+\frac{C_{2}}{(C^*_\gamma-1)\kappa}\right) \tau^{2}\left(\left(1+(C^*_\gamma-1)\kappa \tau+\frac12{(C^*_\gamma-1)^2}(\kappa \tau)^{2}\right)^{n}-1\right) \\
  &\leq  \mathrm{e}^{(C^*_\gamma-1)\kappa n \tau}\|e_{2}^{0}\|+\left(\frac{C^*_\gamma}{2(C^*_\gamma-1)}C_{1} \kappa+\frac{C_{2}}{(C^*_\gamma-1)\kappa}\right)\mathrm{e}^{(C^*_\gamma-1)\kappa n \tau} \tau^{2}.
\end{align*}
By letting  $C=\frac{C^*_\gamma}{2(C^*_\gamma-1)}C_{1} \kappa+\frac{C_{2}}{(C^*_\gamma-1)\kappa}$,  we finally obtain \eqref{etd2conver} since
$e_{2}^{0}=0$  and  $n \tau=t_{n}$.
\end{proof}

\begin{remark}\label{addcond}
In Theorems \ref{thm-etd1-conv} and  \ref{thm-etd2rk-conv}, we additionally assume that there exists a constant $\gamma_d>0$ such that $\int_{\Omega} g(u^n(\bx))\,{\rm d}\bx\geq \gamma_d$ for any $n$ with $n\tau\leq T$. While this assumption on the approximate solution $\{u^n\}$ is necessary to our current proofs of the error estimates,  it remains an  interesting question whether   such assumption can be removed with other  analysis techniques.
Here, we only give the temporal convergence analysis for the ETD1 and ETDRK2 schemes in the space-continuous setting. In the similar spirit of the analysis in \cite{etd5}, the convergence analysis for the fully discrete version is also available by taking the truncation error for spatial discretization into account.
\end{remark}

\section{Numerical experiments}\label{sec4}

In this section, we present some numerical experiments
to demonstrate the performance (convergence rates, mass conservation and MBP preservation) of the proposed ETD schemes \eqref{etd1} and \eqref{etd2rk} for the mass-conserving Allen-Cahn equation \eqref{MAC3}.
We take the domain $\Omega=(-0.5,0.5)^d$ with $d=2$ or $3$.
Moreover, the ETDRK2 scheme is used in all examples
while the ETD1 scheme is only considered in temporal convergence tests due to its lack of high accuracy.
For simplicity,  we here only consider the case of periodic boundary condition and the case of homogeneous Neumann boundary condition is similar.
The stabilizing coefficient is set to be $\kappa=4$ in all experiments.
The spatial discretization is performed by the central difference discretization to form the fully-discrete schemes \eqref{etd1-fully} and \eqref{etd2rk-fully}, in which the products of matrix exponentials with vectors are computed  using the fast Fourier transform based implementation\cite{ju1}.

\subsection{Convergence tests}

We run the first-  and second-order ETD schemes for the mass-conserving Allen-Cahn equation \eqref{MAC3} in 2D
with $\varepsilon=0.01$ and the initial value $u_0(x,y)=\cos(2\pi x)\cos(2\pi y)$. The terminal time is chosen to  be $T=1$.
In order to accurately catch the convergence rate in time,
the spatial mesh size must be small enough and we set   $h = 1/1024$.
To compute the solution errors under different time step sizes $\tau = 1/2^k$ for $k=2,3,\dots,8$, we treat the approximate solution obtained by the ETDRK2 scheme with $\tau=1/1024$ as the benchmark.
\tablename~\ref{tab-temporal-conv} reports the $L^\infty$ and $L^2$ norms of the solution errors at the terminal time $T=1$
and corresponding temporal convergence rates, which clearly verifies
the first-order temporal accuracy for ETD1  and the second-order temporal accuracy for ETDRK2  respectively.

\begin{table*}[!ht]%
\caption{$L^2$ and $L^\infty$ solution errors at $T=1$ and corresponding convergence rates in time by the ETD1 and ETDRK2 schemes respectively.}\label{tab-temporal-conv}
\centering
\begin{tabular*}{500pt}{@{\extracolsep\fill}lcccccccc@{\extracolsep\fill}}
\toprule
\multirow{2}{*}{\textbf{$\tau$}} &\multicolumn{4}{@{}c@{}}{\textbf{ETD1}} & \multicolumn{4}{@{}c@{}}{\textbf{ETDRK2}} \\\cmidrule{2-5}\cmidrule{6-9}
& \textbf{$L^2$ Error}  & \textbf{Rate} & \textbf{$L^\infty$ Error} & \textbf{Rate}  & \multicolumn{1}{@{}l@{}}{\textbf{$L^2$ Error}}  & \textbf{Rate}  & \textbf{$L^\infty$ Error} & \textbf{Rate} \\
\midrule
$1/4$     &2.28e-1&       &1.51e-1&       &1.49e-1&       &9.57e-2& \\
$1/8$  &1.57e-1&0.54&1.01e-1&0.58&6.98e-2&1.09&4.36e-2&1.13\\
$1/16$  &9.40e-2&0.74&5.95e-2&0.77&2.53e-2&1.47&1.55e-2&1.49\\
$1/32$  &5.12e-2&0.87&3.19e-2&0.90&7.72e-3&1.71&4.69e-3&1.72\\
$1/64$&2.61e-2&0.97&1.61e-2&0.98&2.13e-3&1.85&1.29e-3&1.86\\
$1/128$&1.25e-2&1.06&7.70e-3&1.06&5.57e-4&1.94&3.37e-4&1.94\\
$1/256$&5.43e-3&1.20&3.33e-3&1.20&1.36e-4&2.03&8.23e-5&2.03\\
\bottomrule
\end{tabular*}
\end{table*}

Next, we  test the spatial convergence of the central difference using the ETDRK2 scheme.
We  fix the time step size $\tau=T/1024$ and regard the approximate solution produced by the ETDRK2 scheme with $h =1/2048$ as the benchmark
for computing the solution  errors  with different spatial mesh sizes.
The $L^\infty$ and $L^2$ norms of the solution errors at $T=1$ and corresponding convergence rates are presented in \tablename~\ref{tab-spacial-conv}. It is observed that the convergence rates with respect to $h$ are clearly of second order as expected.

\begin{table}[!htb]%
\centering
\caption{$L^2$  and  $L^\infty$  solution errors at $T=1$ and corresponding convergence rates in space by the ETDRK2 scheme.}
\label{tab-spacial-conv}%
\begin{tabular*}{300pt}{@{\extracolsep\fill}lcccc@{\extracolsep\fill}}%
\toprule
\textbf{$h$} & \textbf{$L^2$ Error} & \textbf{Rate} & \textbf{$L^\infty$ Error} & \textbf{Rate} \\
\midrule
1/64    &9.98e-4 &       &3.14e-4& \\
1/128  &3.09e-4 &1.69&9.21e-5&1.77\\
1/256  &8.38e-5 &1.88&2.40e-5&1.94\\
1/512  &2.12e-5 &1.97&6.07e-6&1.98\\
1/1024&5.33e-6 &1.99&1.52e-6&1.99\\
\bottomrule
\end{tabular*}
\end{table}

\subsection{Tests of mass-conservation and MBP-preservation}

We numerically simulate and investigate the discrete MBP in long-time phase separation processes
governed by the mass-conserving Allen-Cahn equation \eqref{MAC3} in 2D and 3D spaces. The ETDRK2 scheme is used.
We set $\varepsilon=0.01$ and the time step size $\tau=0.1$.  The spatial grid size is selected to be $h = 1/1024$ in 2D and $h = 1/256$ in 3D .

We start the 2D simulations with an initial configuration of  $u_0 = 0.9\,\rm{rand\,(\cdot)}$ (here $\rm{rand\,(\cdot)}$ represents the quasi-uniform random distribution between $-1$ and $1$). In this case, we also compare the simulation results with those produced by the classic Cahn-Hilliard equation \cite{ju2}
\begin{equation}\label{ch}
\partial_t u(\bx, t)=-\Delta({\varepsilon}^2\Delta u(\bx, t)+f(u(\bx, t))), \qquad \mathbf{x}\in \Omega,\ t>0,
\end{equation}
with  ${\varepsilon}=0.01$ and the same initial configuration.
\figurename~\ref{2D1} presents the configurations of the simulated solutions at $t=1$, $100$, $1000$, and $2500$
for the mass-conserving Allen-Cahn equation \eqref{MAC3}. The  steady state is gradually reached after about $t=2000$.
The evolutions of the mass, the supremum norm and the energy are plotted in \figurename~\ref{2D2}.
It is easy to see that the mass is exactly conserved and  the discrete MBP  is  preserved perfectly along the time.
Although there is no energy dissipation law for the equation \eqref{MAC3} theoretically,
we still observe that the energy decays monotonically for this example.
The configurations of the simulated solutions at  $t=1$, $10$, $50$, and $300$ for the Cahn-Hilliard equation are presented in \figurename~\ref{2D1111},
where the same steady state is reached after  around $t=80$.
This implies that the evolution of the phase structure in the mass-conserving Allen-Cahn equation
is much slower than that in the Cahn-Hilliard equation.
\figurename~\ref{CHDR2} shows the corresponding evolutions of the mass, the supremum norm and the energy.
We observe that the mass is conserved and the energy decays monotonically along the time.
However, the supremum norm of the numerical solution is beyond the constant $1$ after about $t=1$
since the Cahn-Hilliard equation does not have the MBP property.

\begin{figure*}[!ht]
\centerline{
\subfigure[$t=1$]{{\includegraphics[width=0.33\textwidth]{./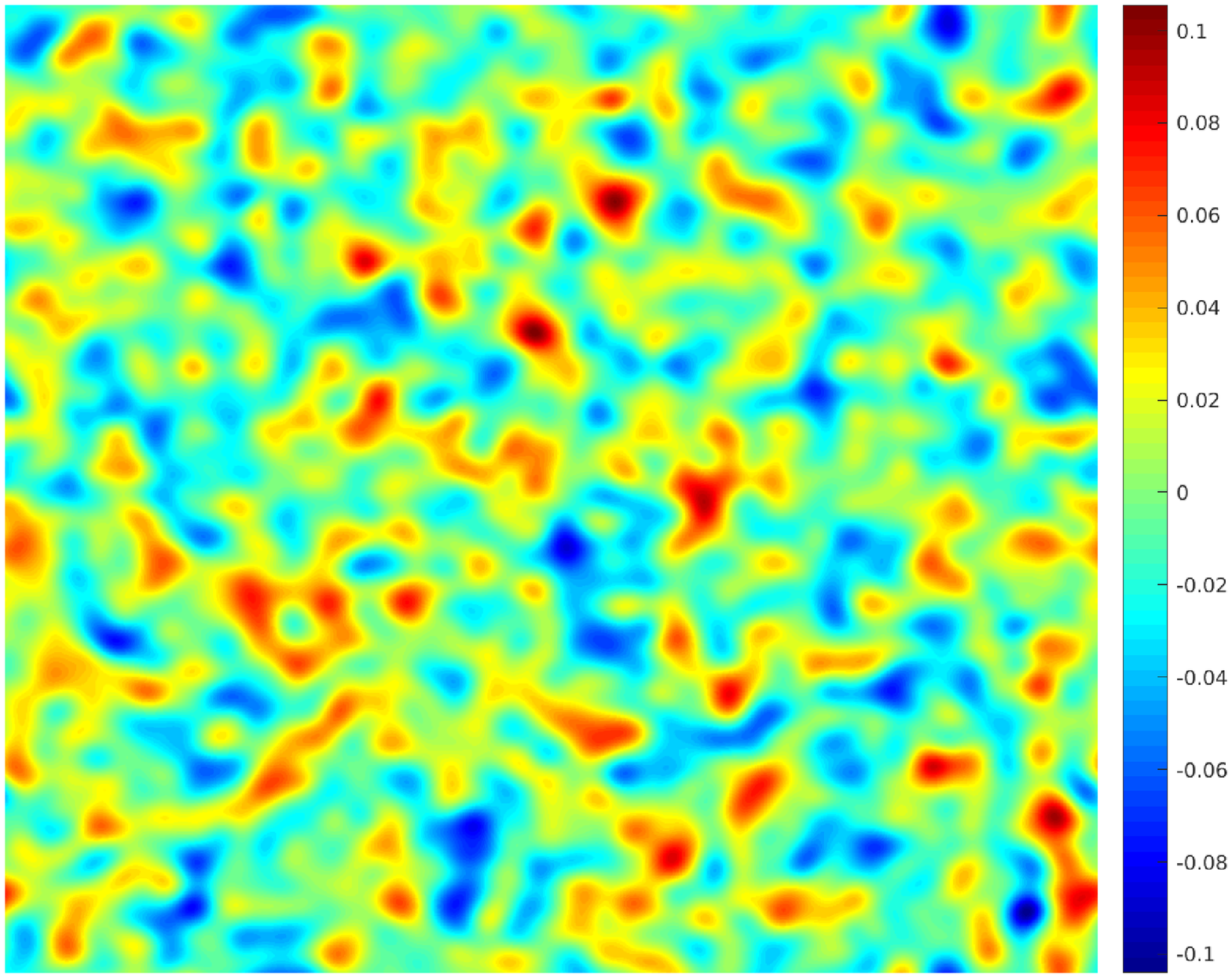}}}\hspace{0.3cm}
\subfigure[$t=100$]{{\includegraphics[width=0.33\textwidth]{./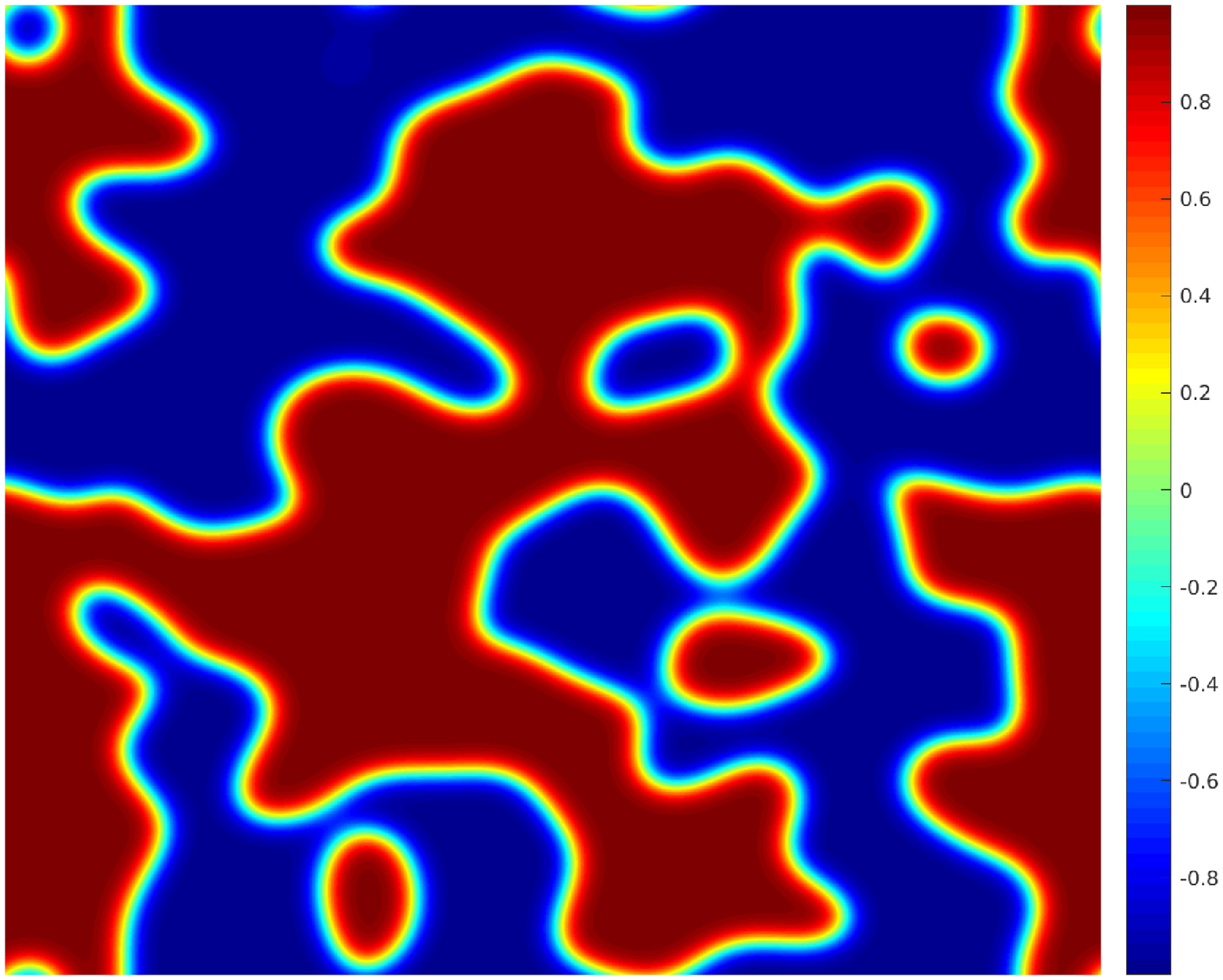}}}}
\centerline{
\subfigure[$t=1000$]{{\includegraphics[width=0.33\textwidth]{.//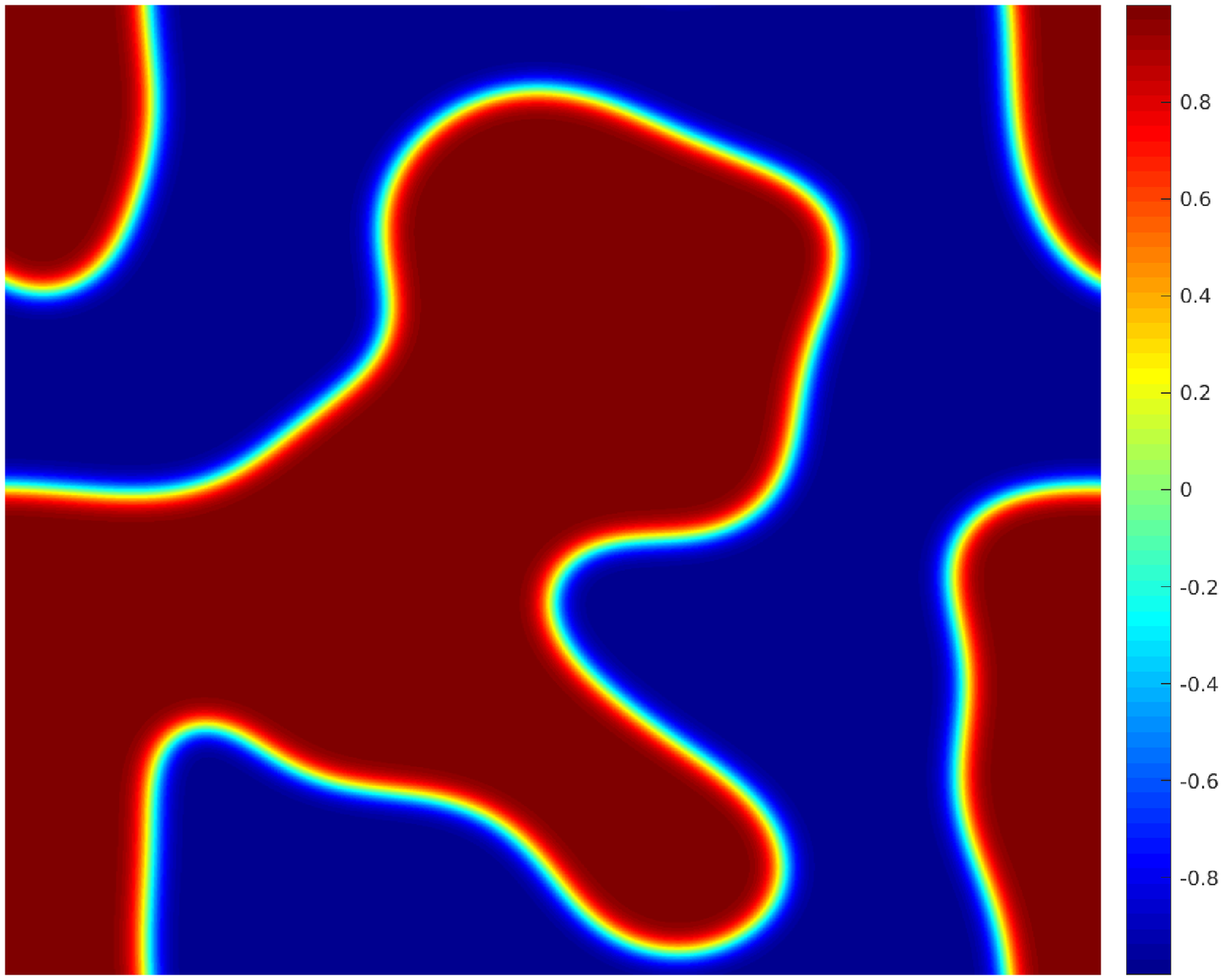}}}\hspace{0.3cm}
\subfigure[$t=2500$]{{\includegraphics[width=0.33\textwidth]{./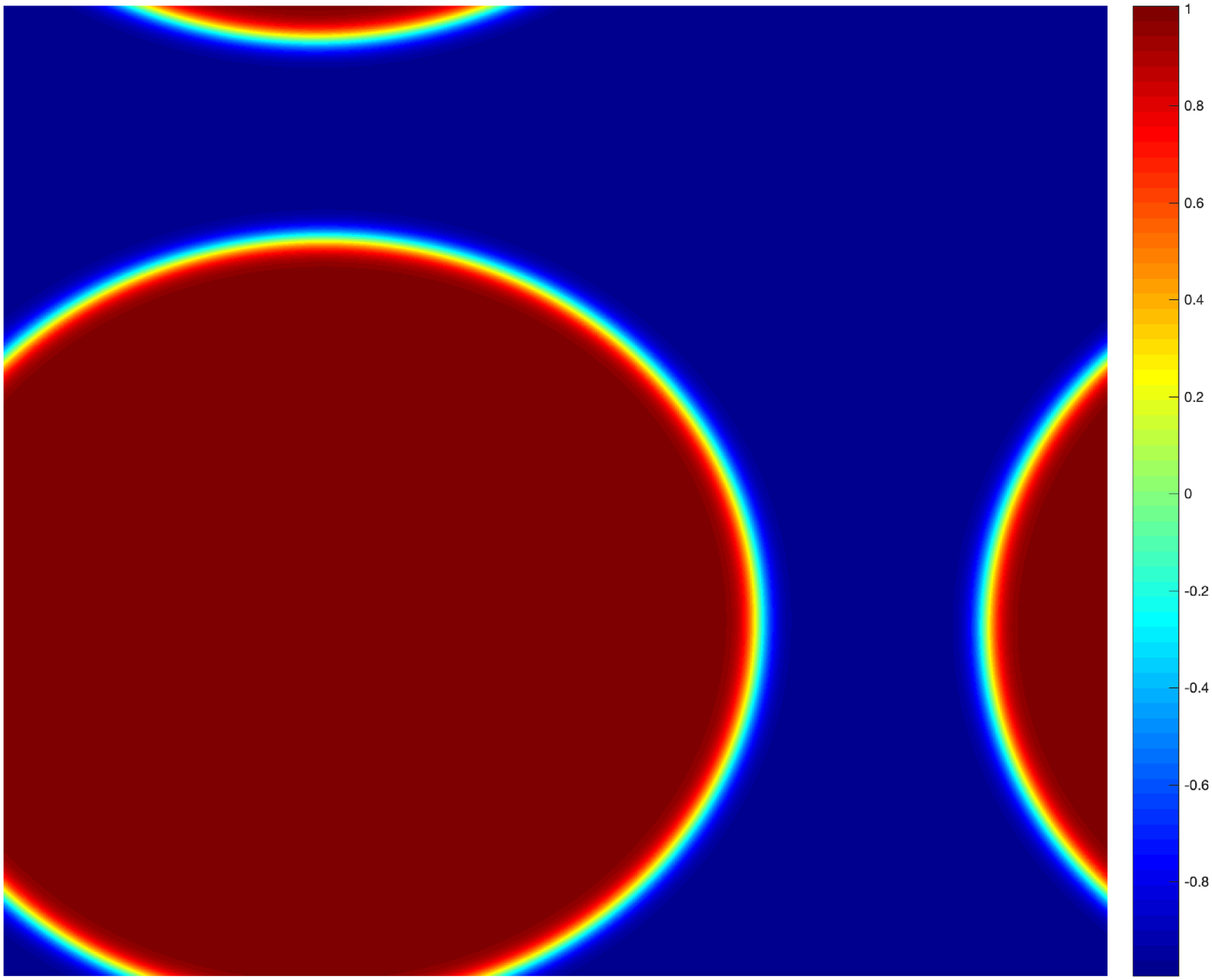}}}}
\caption{The simulated solutions at $t=1$, $100$, $1000$ and $2500$  respectively
for the mass-conserving Allen-Cahn equation with an initial quasi-uniform state  in 2D by the ETDRK2 scheme.}
\label{2D1}
\end{figure*}

\begin{figure*}[!ht]
\centerline{
\subfigure{{\includegraphics[width=0.36\textwidth]{./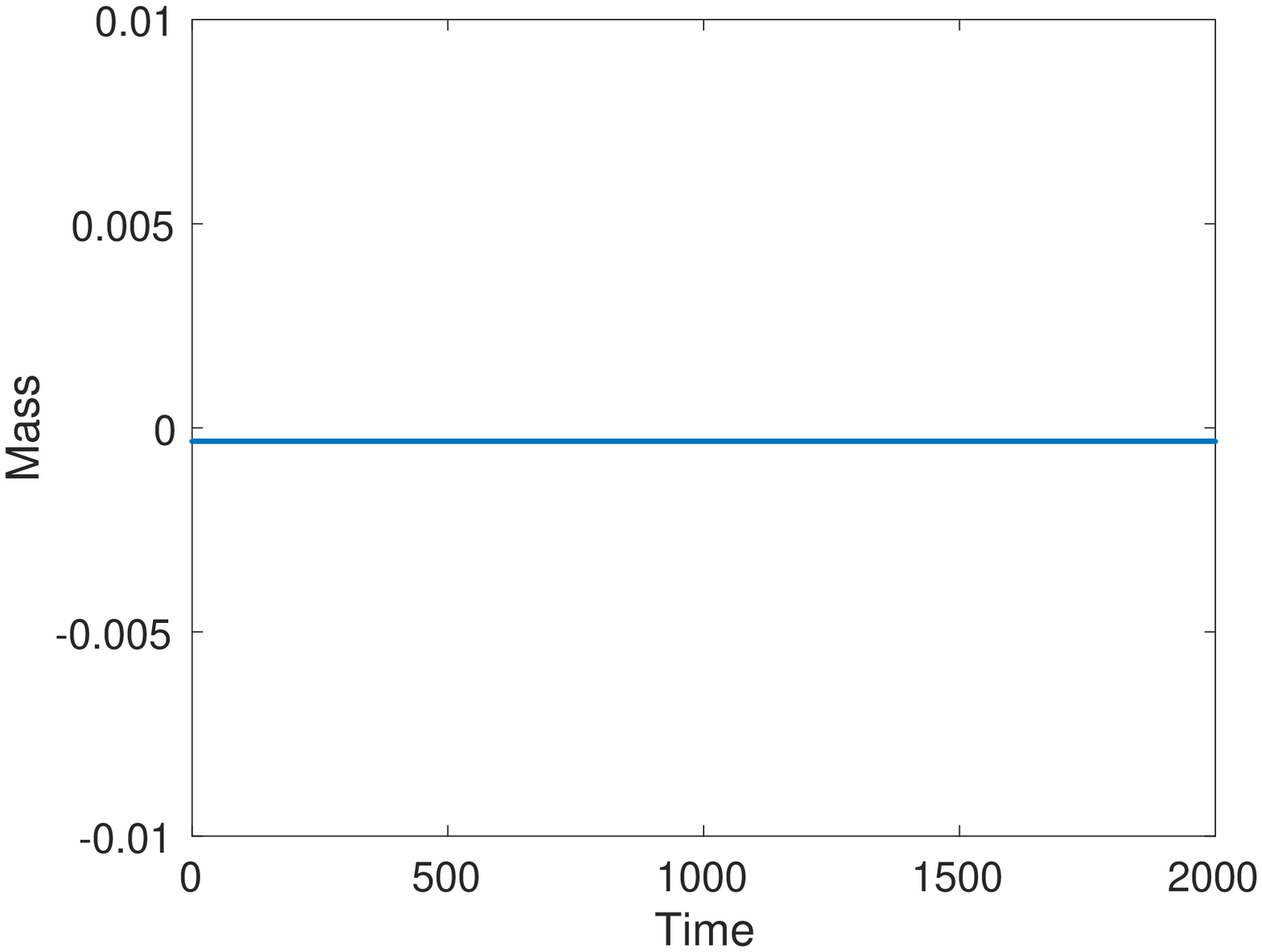}}}\hspace{-0.55cm}
\subfigure{{\includegraphics[width=0.36\textwidth]{./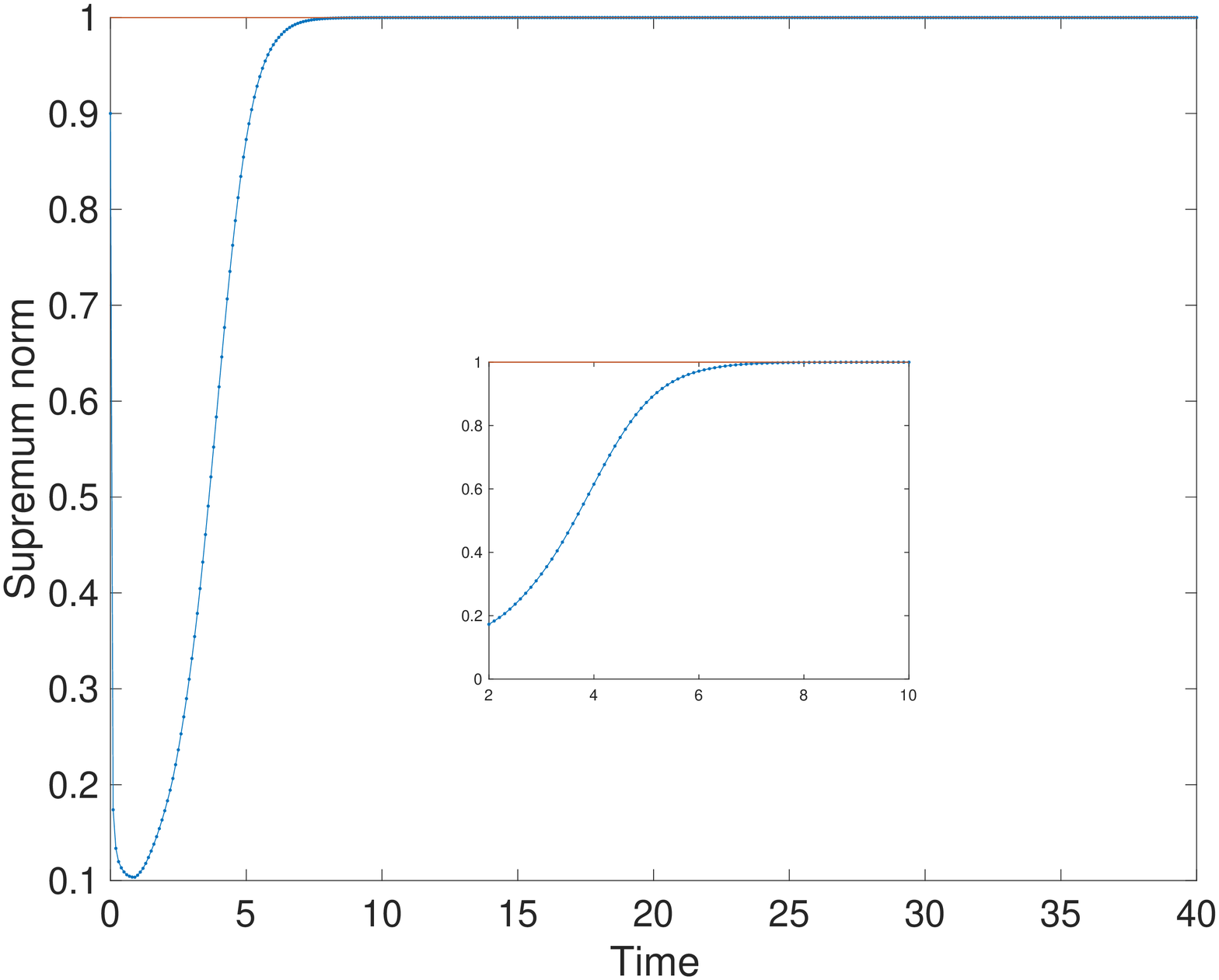}}}\hspace{-0.55cm}
\subfigure{{\includegraphics[width=0.36\textwidth]{./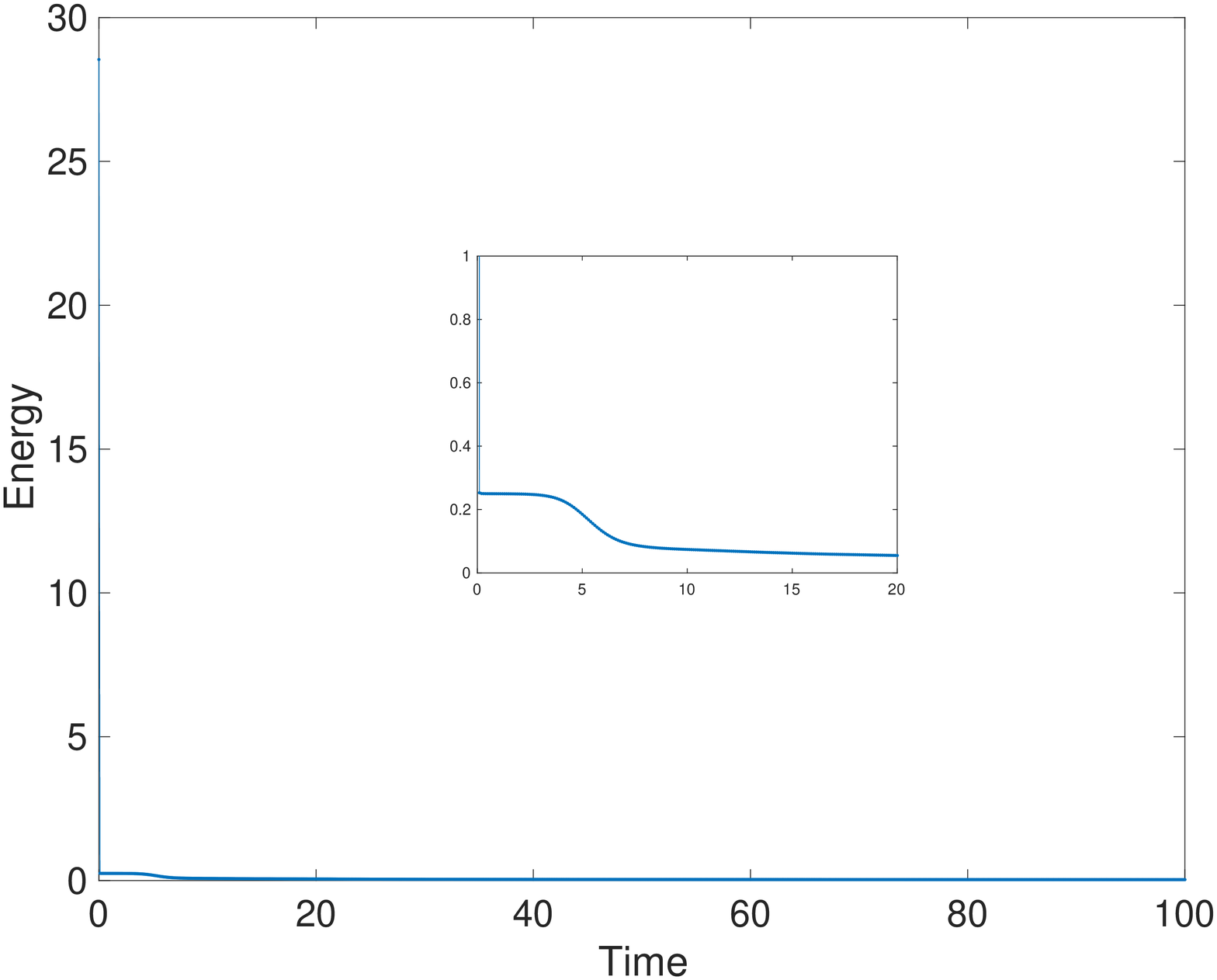}}}}
\caption{Evolutions of the mass, the supremum norm and  the  energy
of the simulated solutions for the mass-conserving Allen-Cahn equation with  an initial quasi-uniform state in 2D by the ETDRK2 scheme. }
\label{2D2}
\end{figure*}

\begin{figure*}[!ht]
\centerline{
\subfigure[$t=1$]{{\includegraphics[width=0.33\textwidth]{./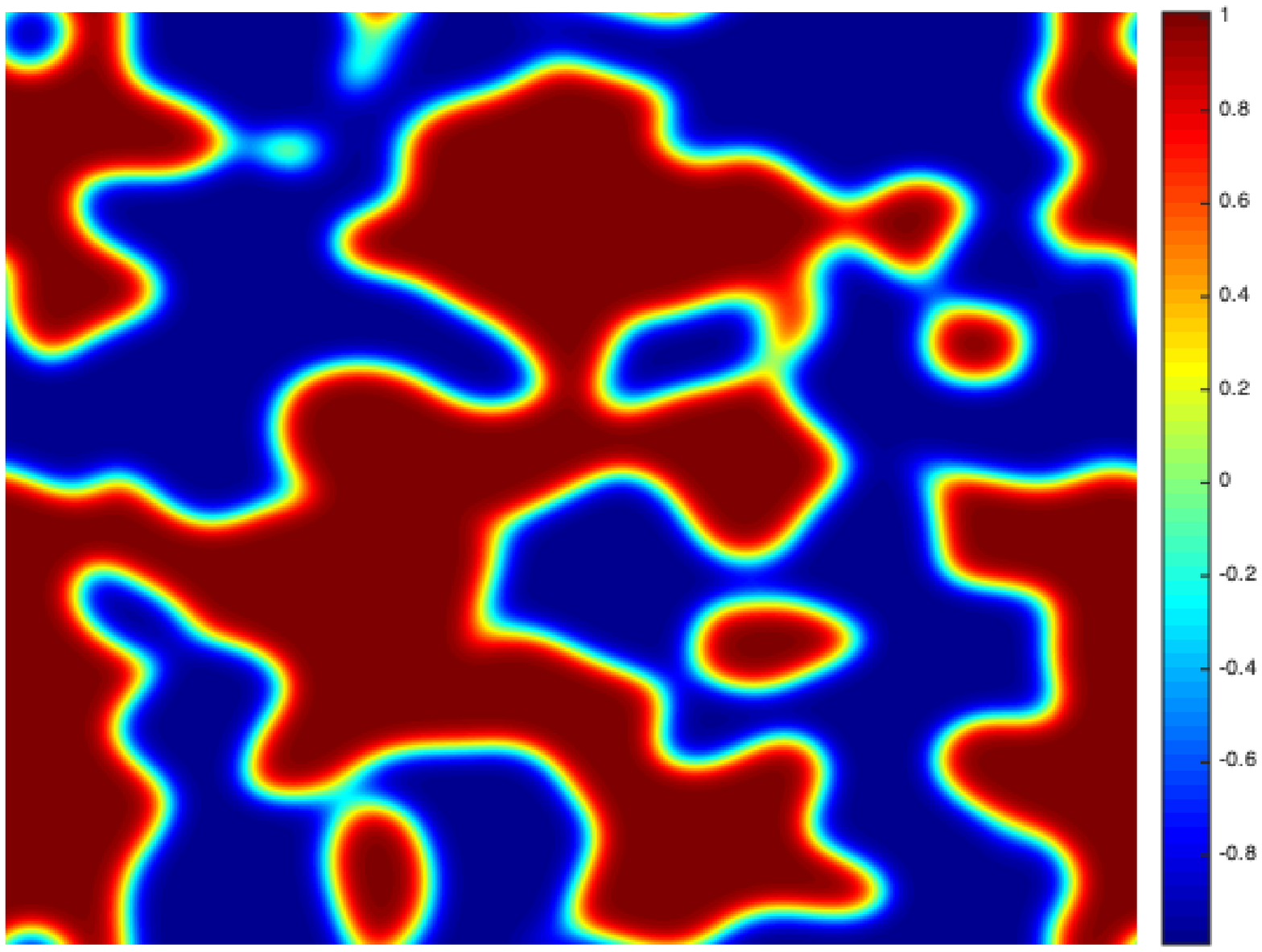}}}\hspace{0.3cm}
\subfigure[$t=10$]{{\includegraphics[width=0.33\textwidth]{./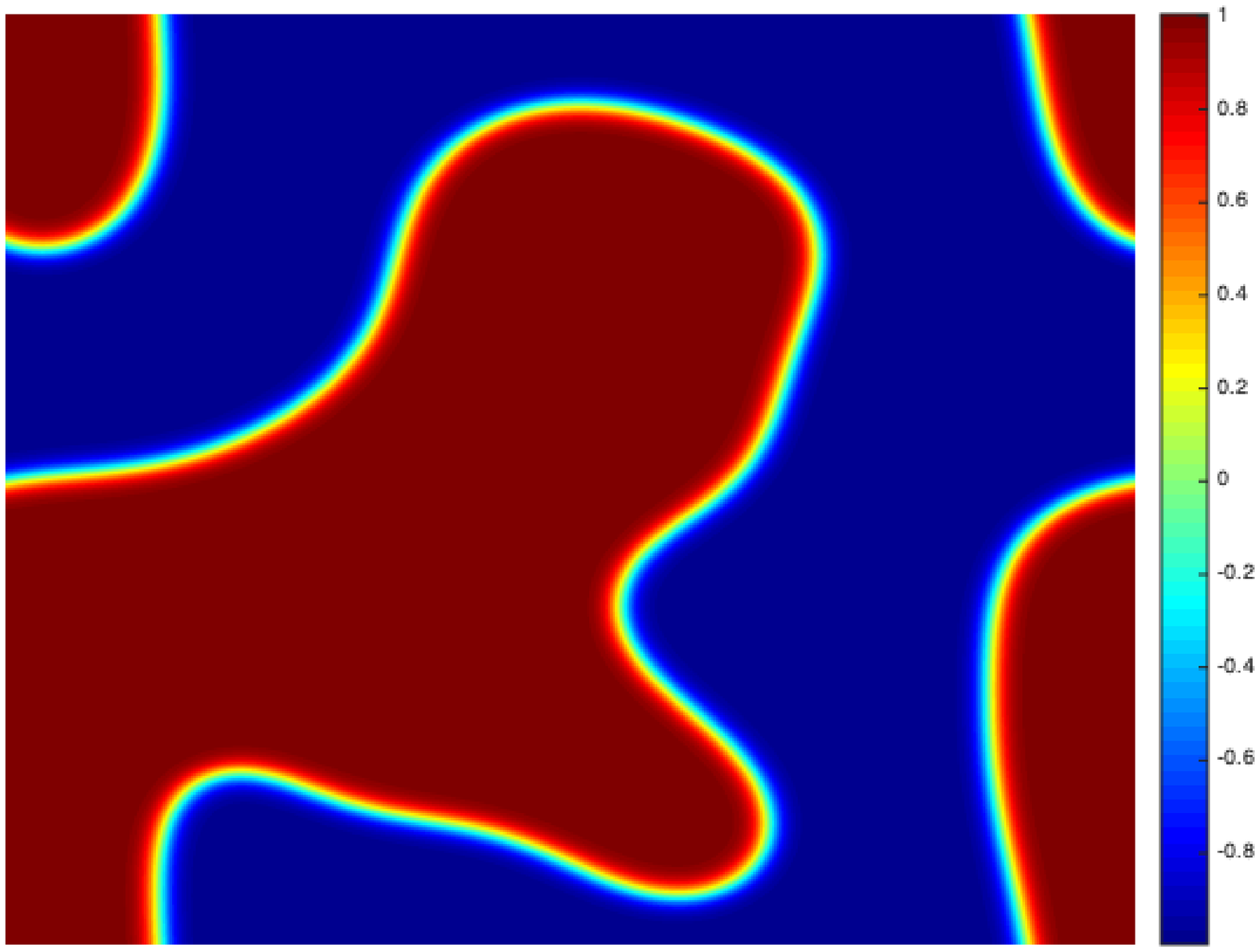}}}}
\centerline{
\subfigure[$t=50$]{{\includegraphics[width=0.33\textwidth]{./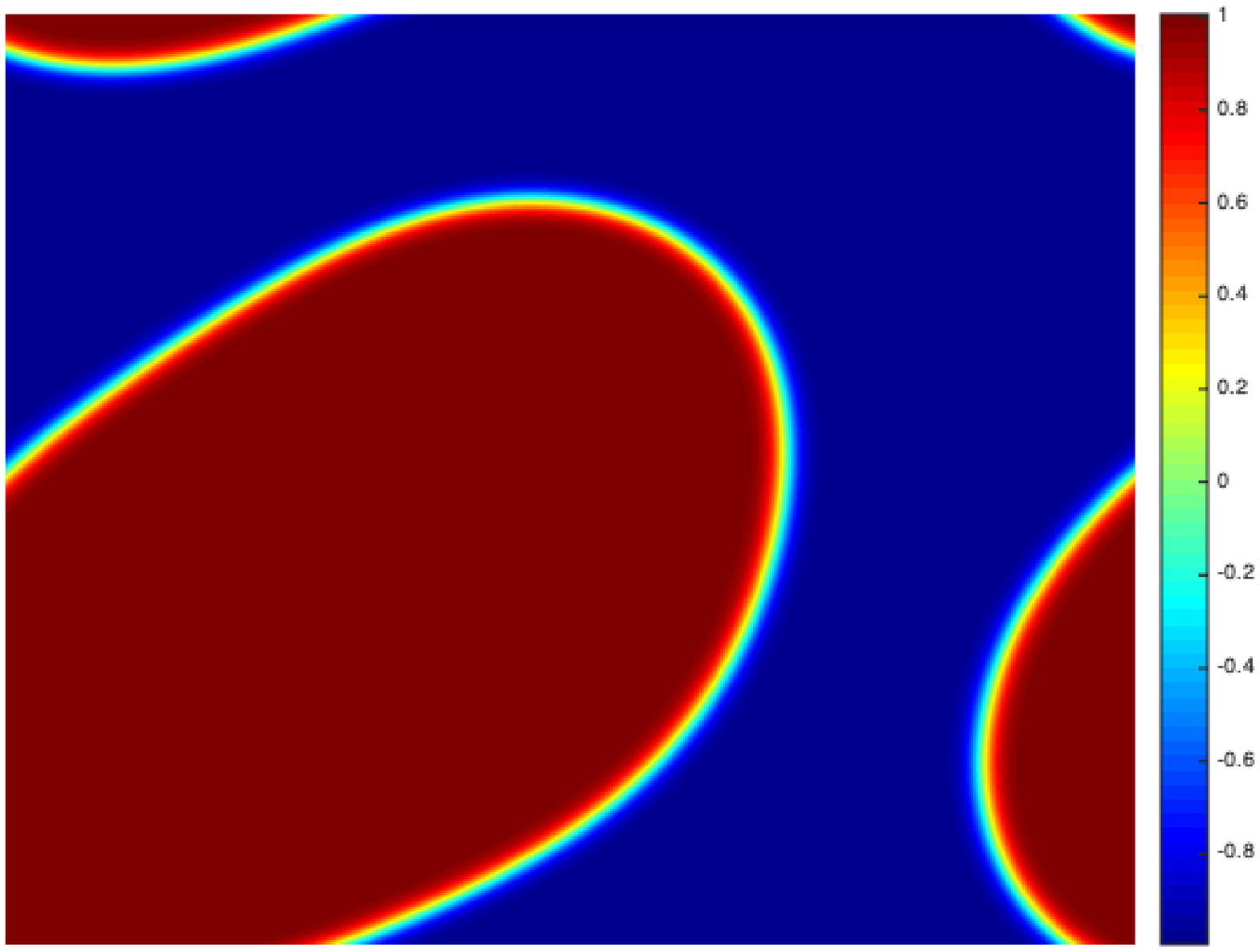}}}\hspace{0.3cm}
\subfigure[$t=300$]{{\includegraphics[width=0.33\textwidth]{./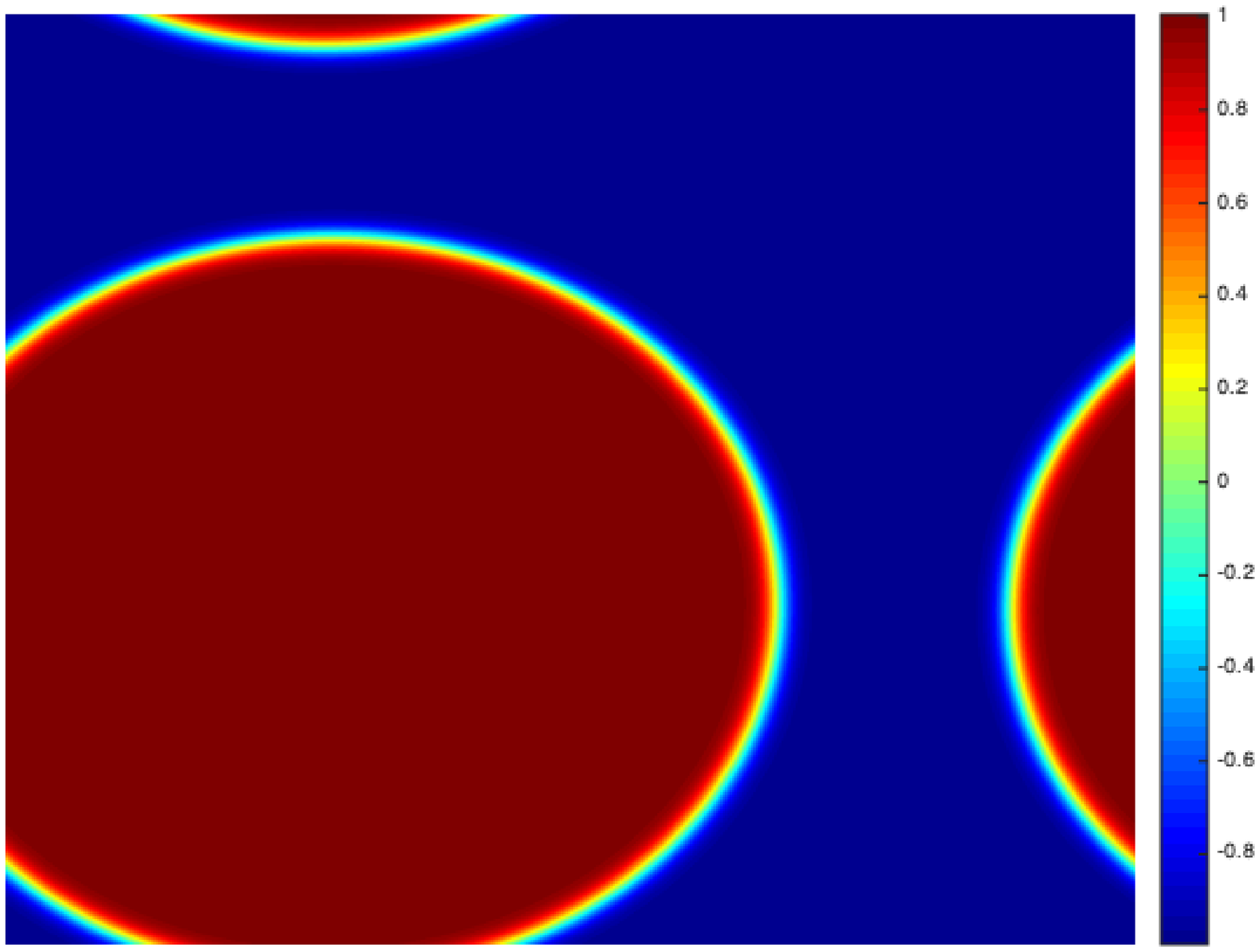}}}}
\caption{The simulated solution at $t=1$, $10$, $50$ and $300$  respectively
for the Cahn-Hilliard equation with an initial  quasi-uniform state in 2D.}
\label{2D1111}
\end{figure*}

\begin{figure*}[!ht]
\centerline{
\subfigure{{\includegraphics[width=0.36\textwidth]{./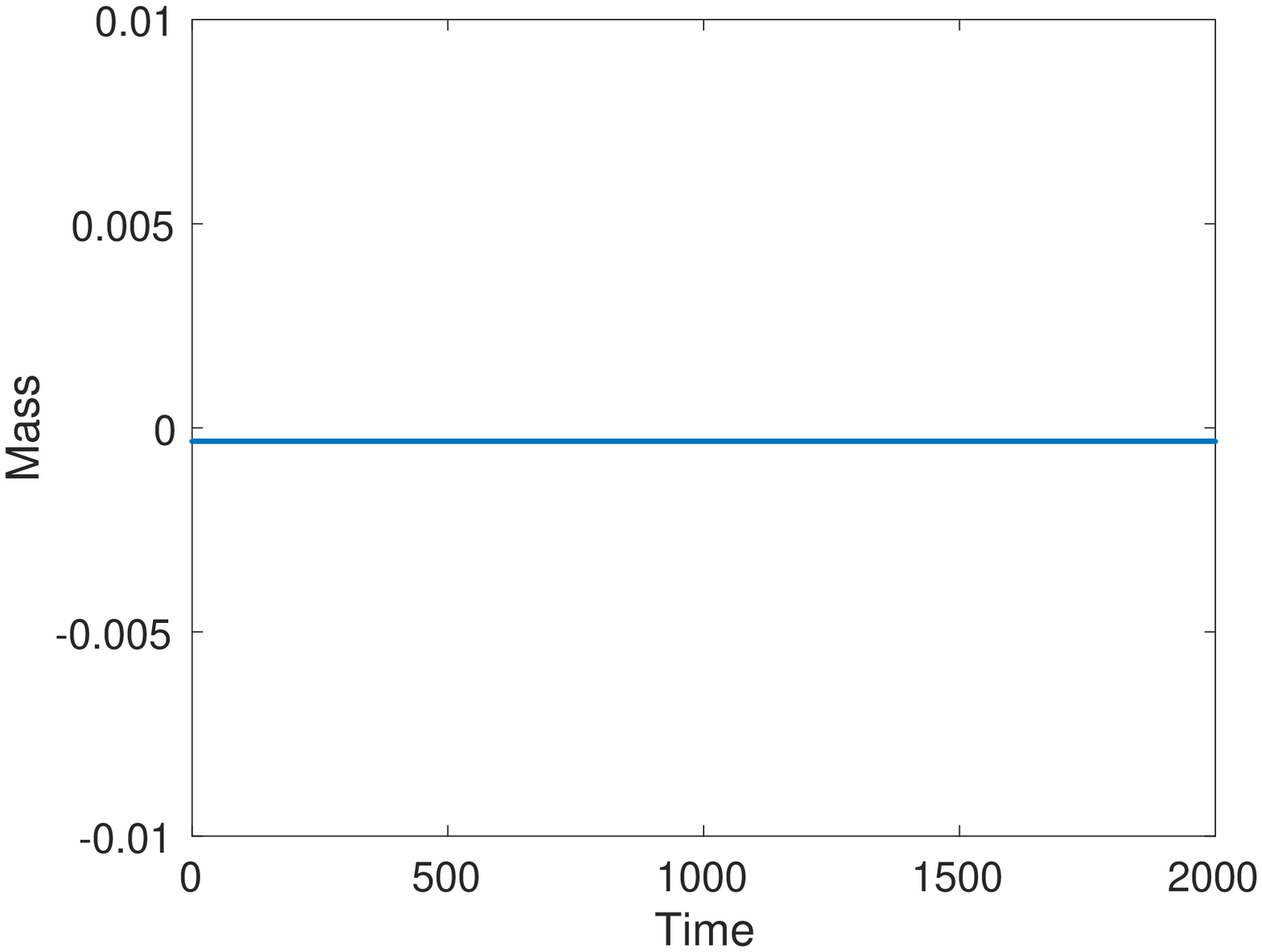}}}\hspace{-0.55cm}
\subfigure{{\includegraphics[width=0.36\textwidth]{./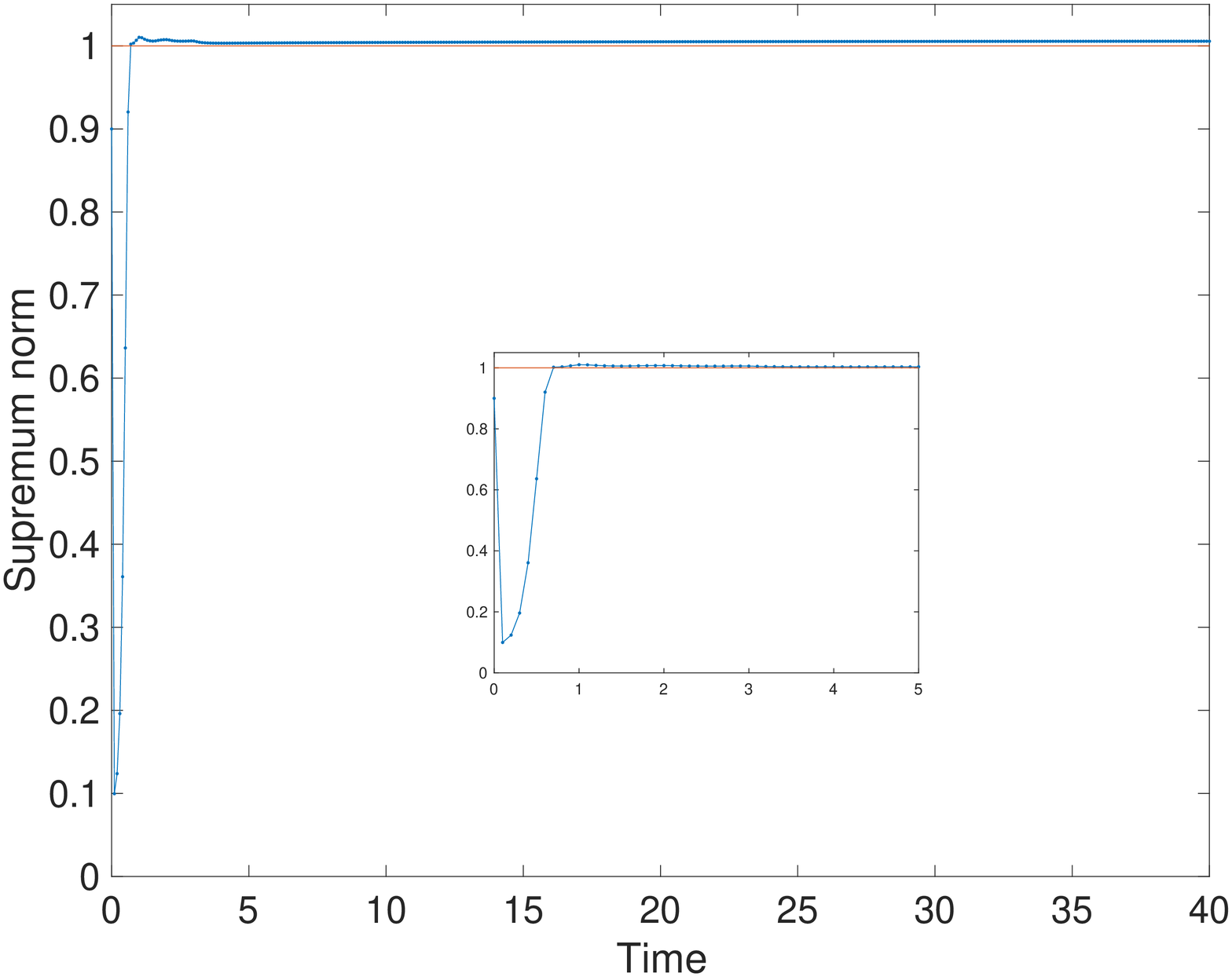}}}\hspace{-0.55cm}
\subfigure{{\includegraphics[width=0.36\textwidth]{./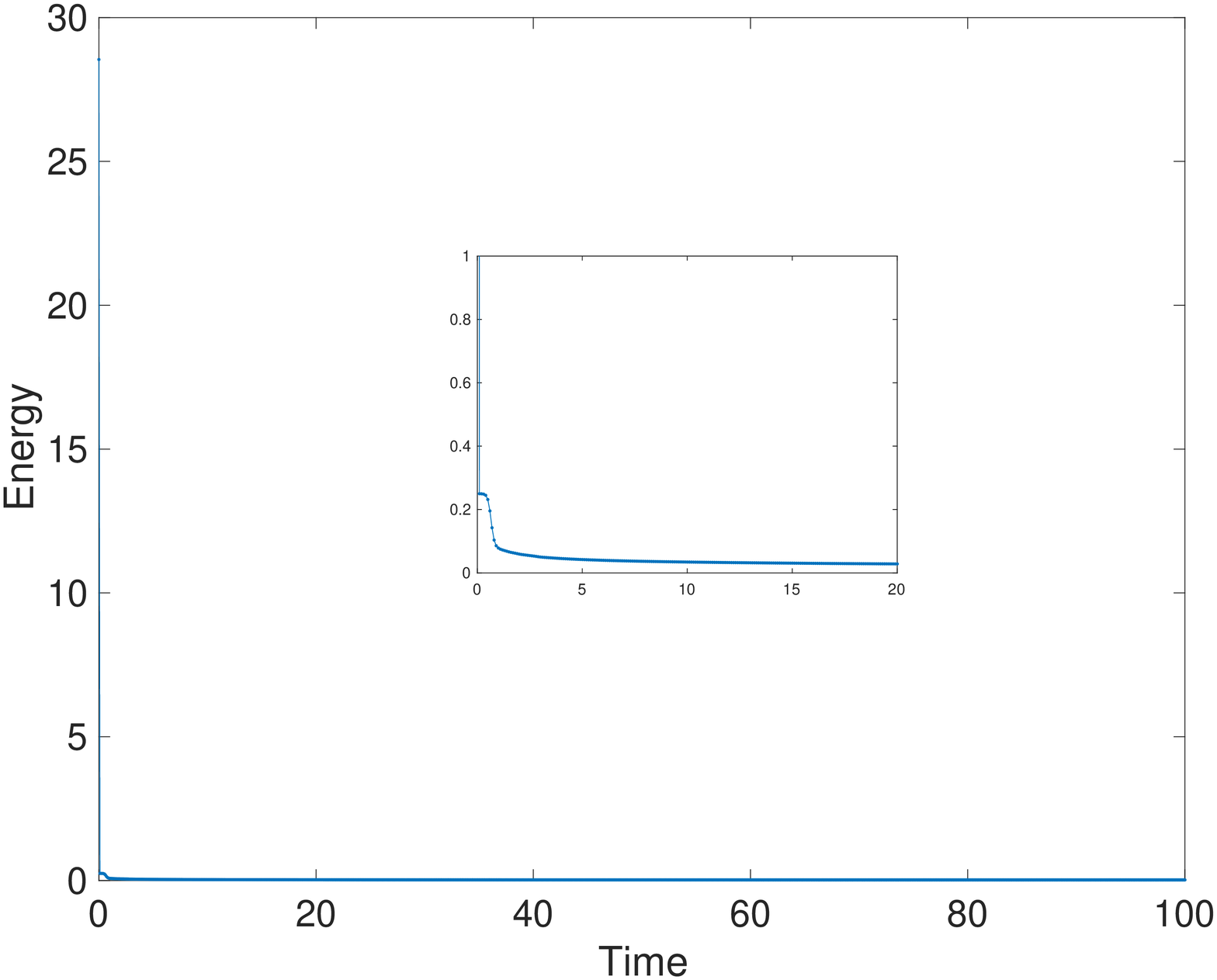}}}}
\caption{Evolutions of the mass, the supremum norm  and  the  energy
of the simulated solutions for the  Cahn-Hilliard equation with an initial  quasi-uniform state in 2D. }\label{CHDR2}
\end{figure*}

Our 3D simulations start with the quasi-uniform initial state $u_0 = 0.9\,\mathrm{rand(\cdot)}$ as well.
\figurename~\ref{3DR1} presents the configurations of the computed solution at $t=1$, $30$, $200$, and $4000$
for the mass-conserving Allen-Cahn equation.
The corresponding evolutions of the mass, the supremum norm and the energy are plotted in \figurename~\ref{3DR2}.
We observed again that the  mass is exactly conserved, the discrete MBP is preserved perfectly, and
the energy decays monotonically along the time.

\begin{figure*}[!ht]
\vspace{-0.35cm}
\centerline{
\subfigure[$t=1$]{{\includegraphics[width=0.38\textwidth]{./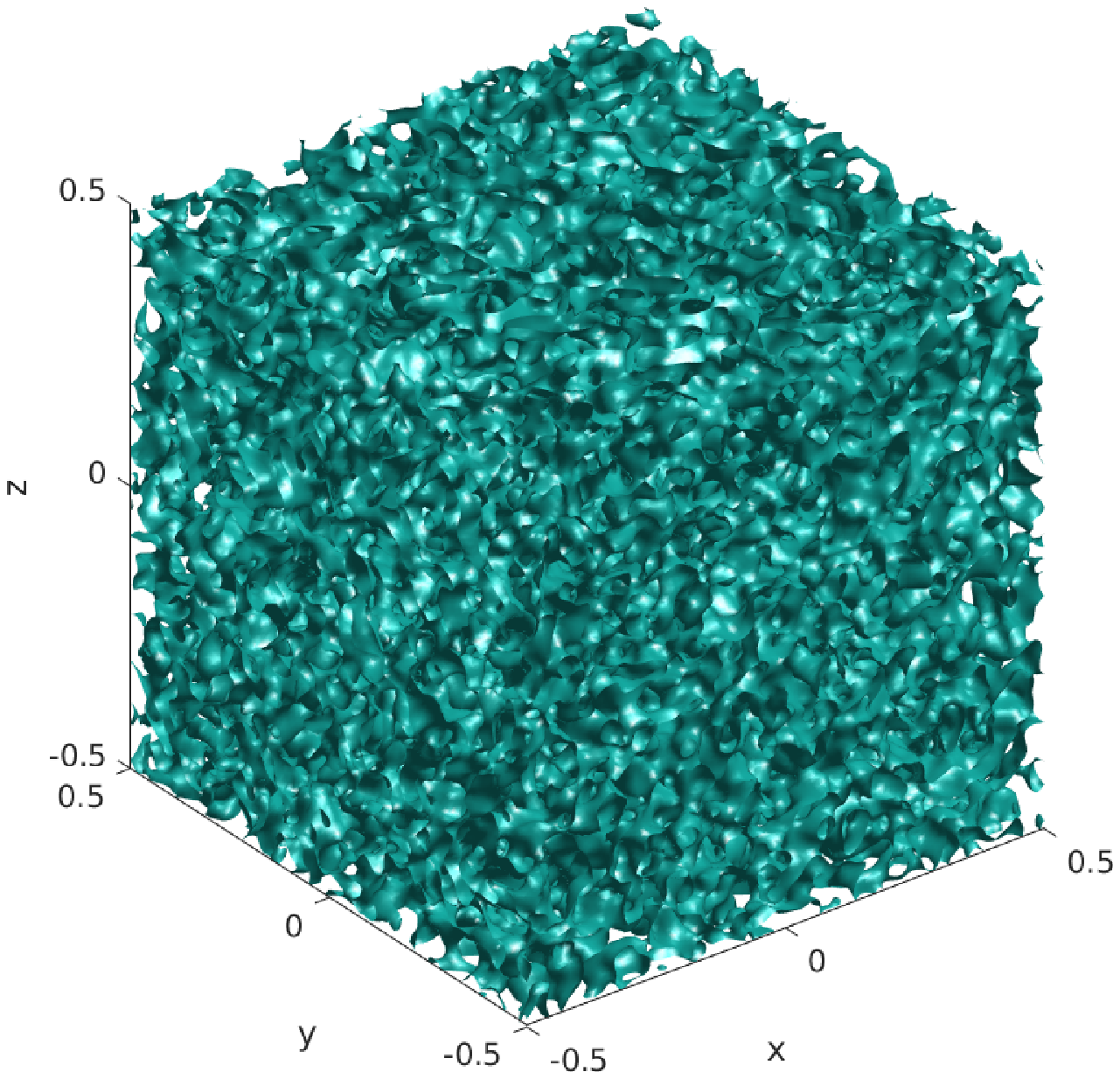}}}
\subfigure[$t=30$]{{\includegraphics[width=0.38\textwidth]{./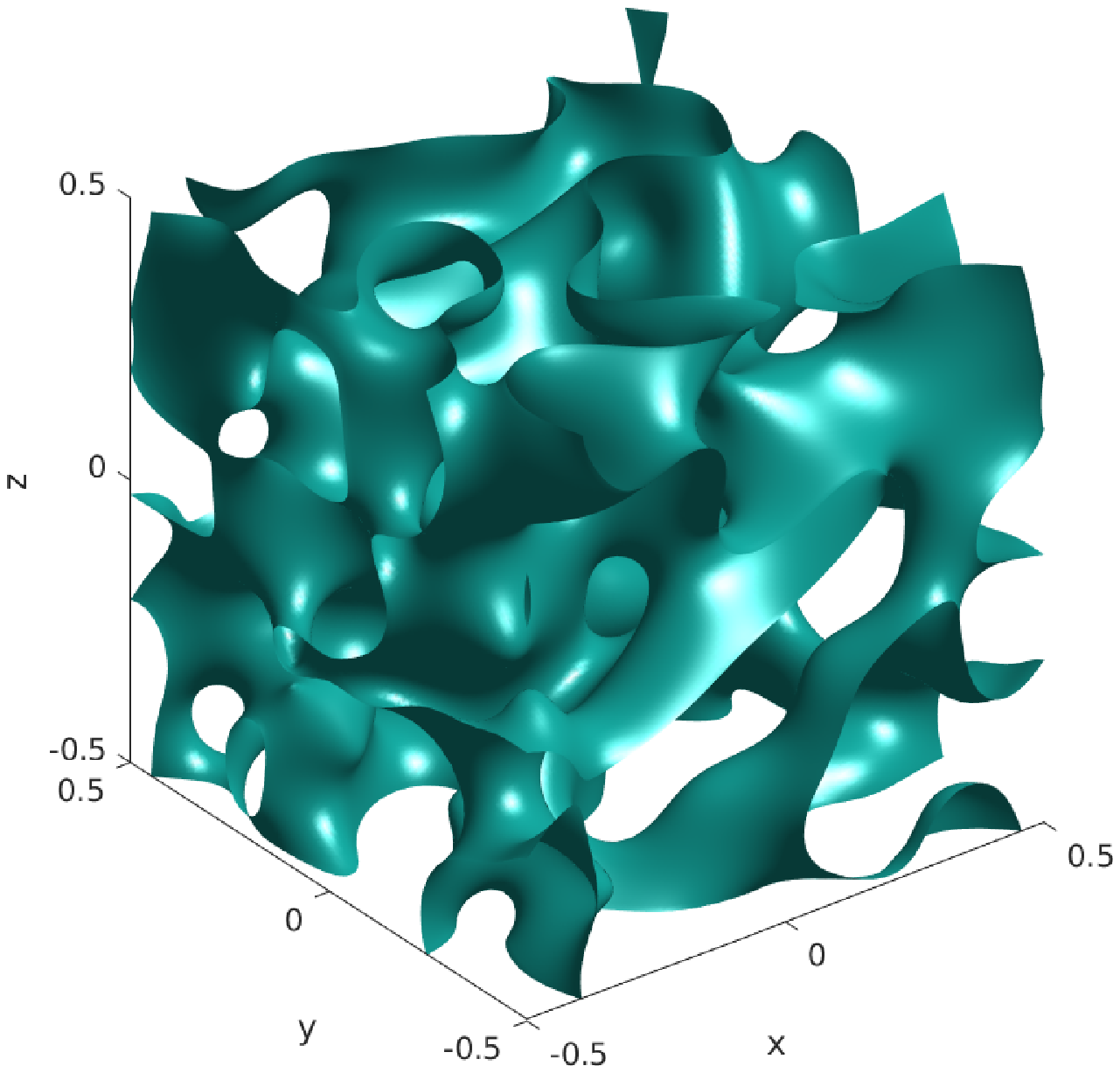}}}}
\vspace{-0.35cm}
\centerline{
\subfigure[$t=200$]{{\includegraphics[width=0.38\textwidth]{./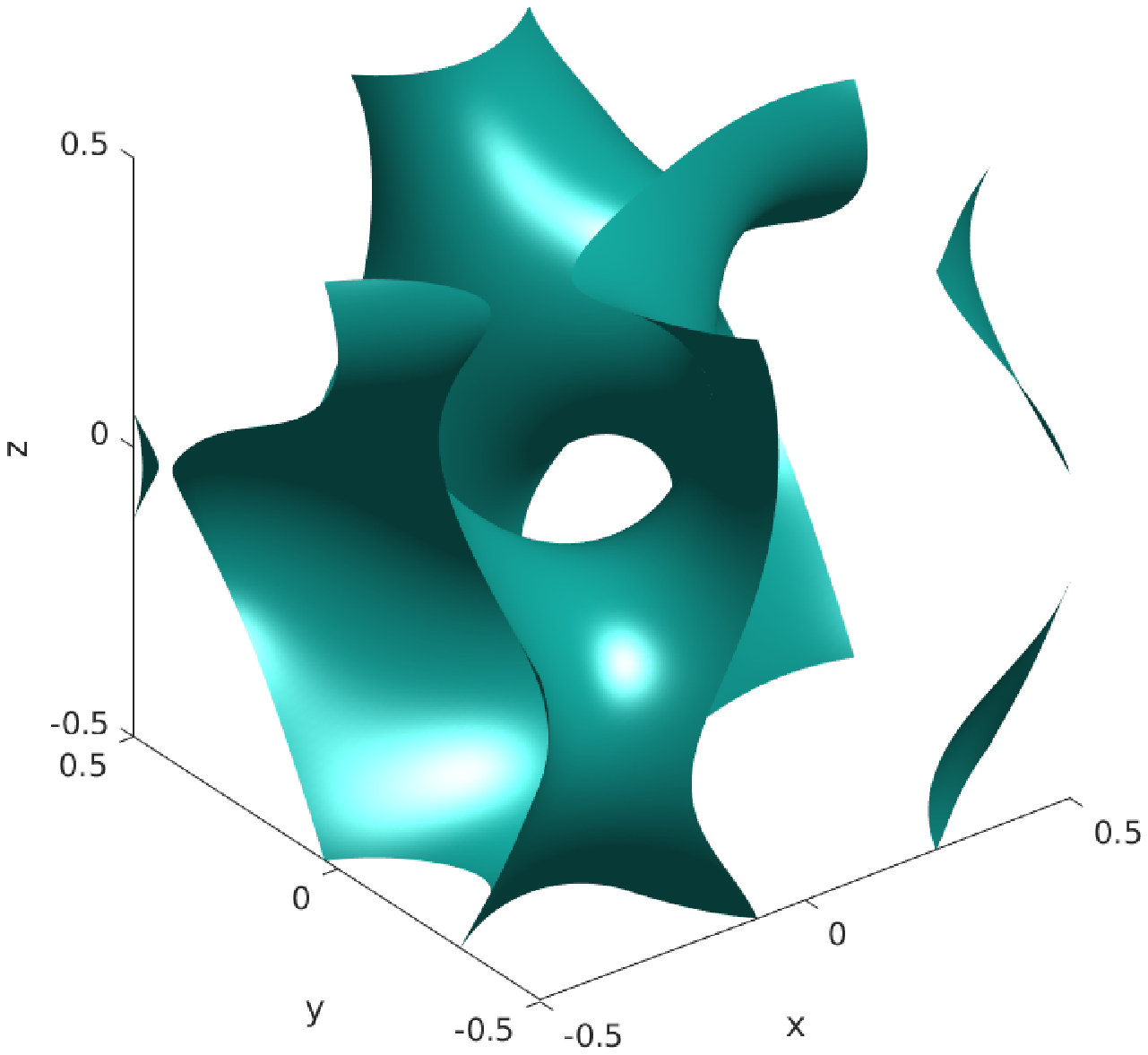}}}
\subfigure[$t=4000$]{{\includegraphics[width=0.38\textwidth]{./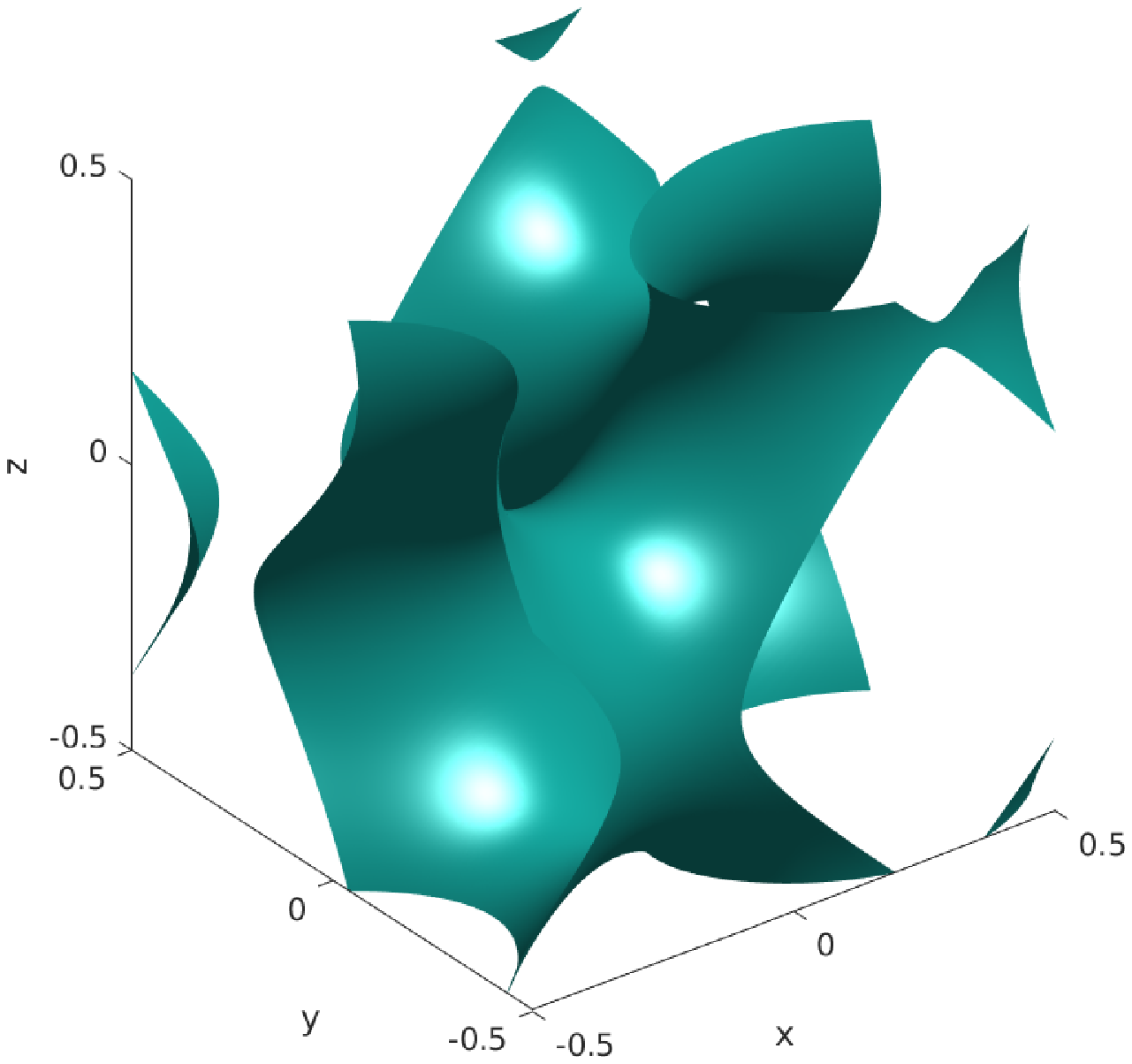}}}}
\caption{The simulated phase structures  at $t=1$, $30$, $200$ and $4000$ respectively
for the mass-conserving Allen-Cahn equation with an initial  quasi-uniform state  in 3D.}\label{3DR1}
\end{figure*}

\begin{figure*}[!ht]
\centerline{
\subfigure{{\includegraphics[width=0.36\textwidth]{./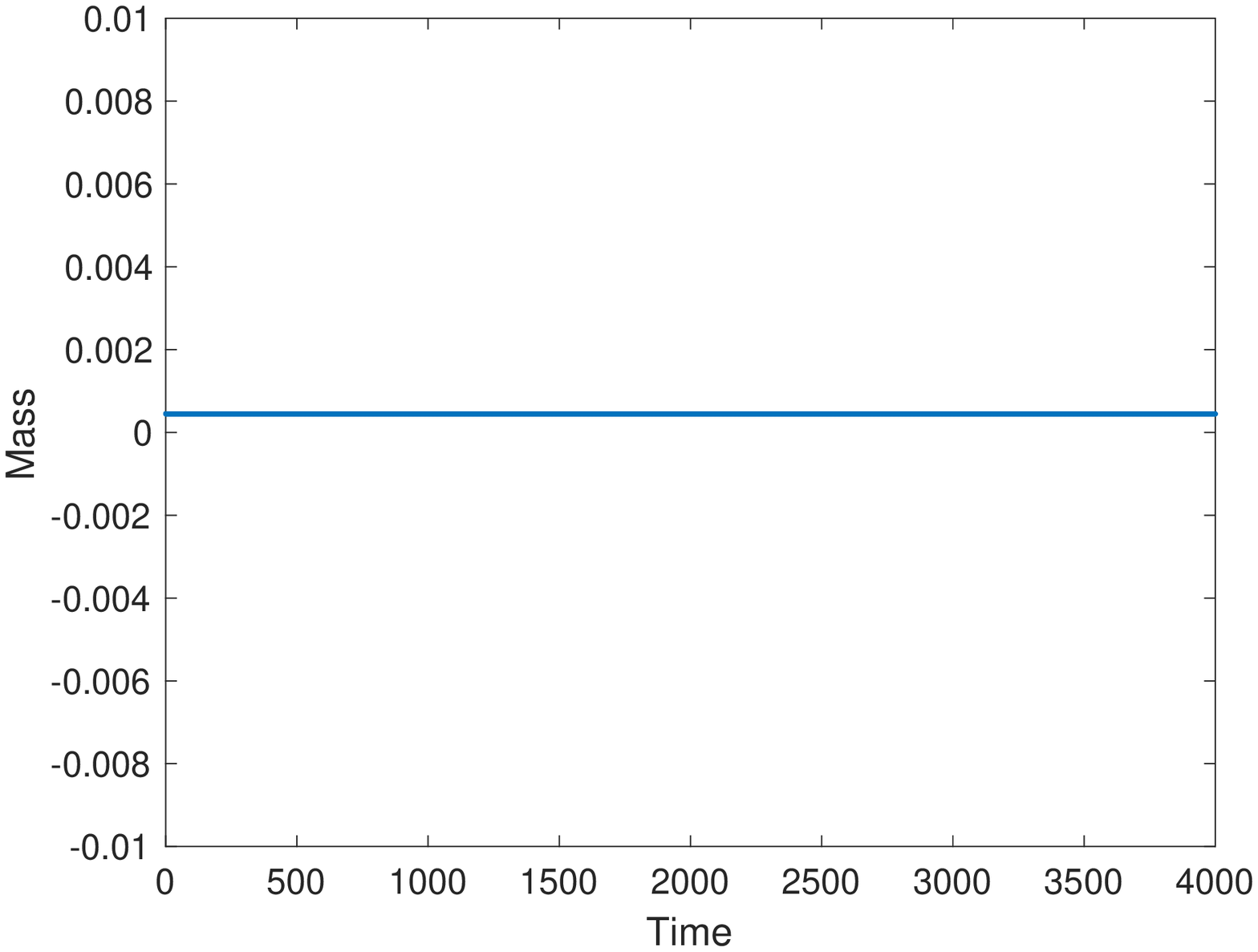}}}\hspace{-0.55cm}
\subfigure{{\includegraphics[width=0.36\textwidth]{./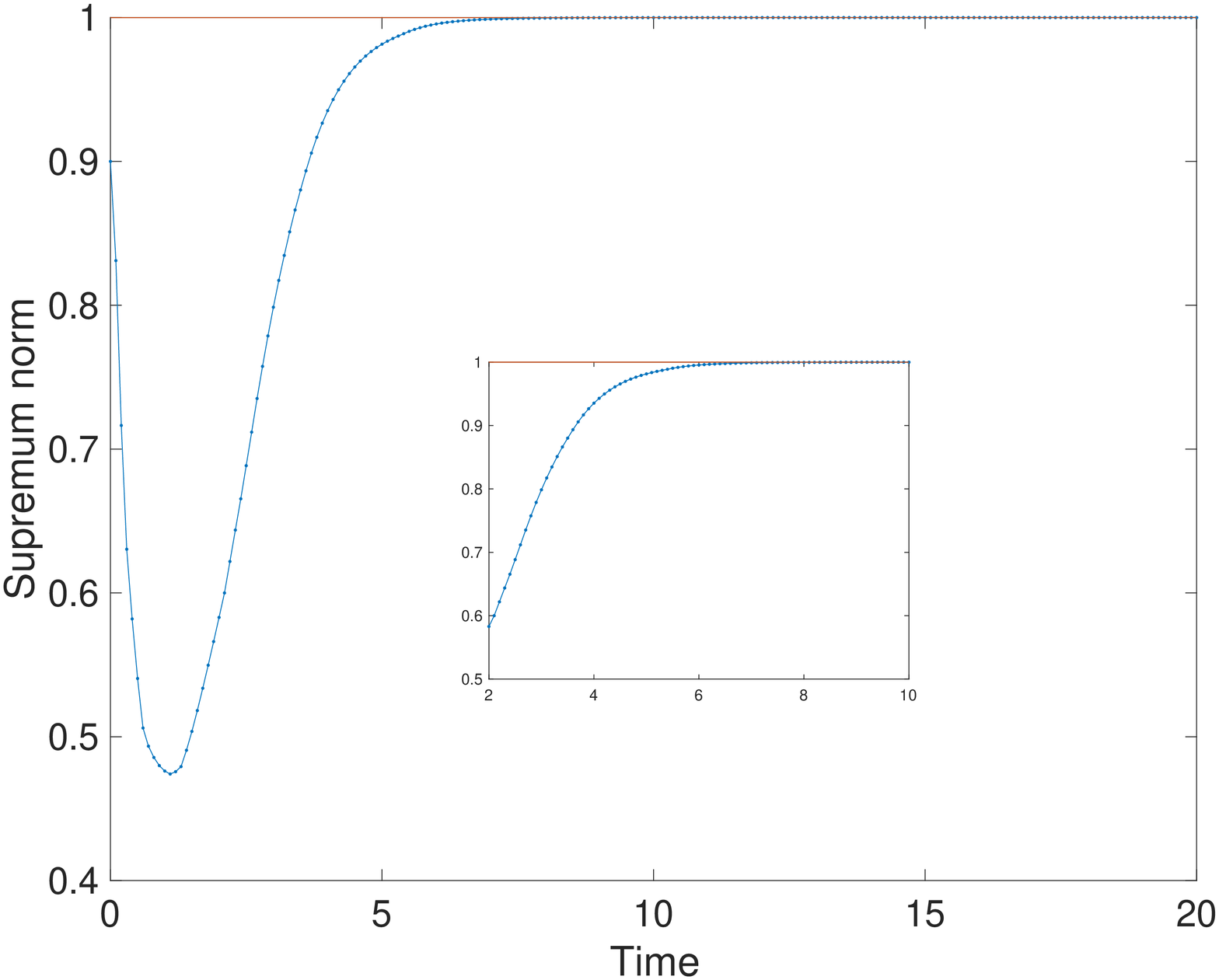}}}\hspace{-0.55cm}
\subfigure{{\includegraphics[width=0.36\textwidth]{./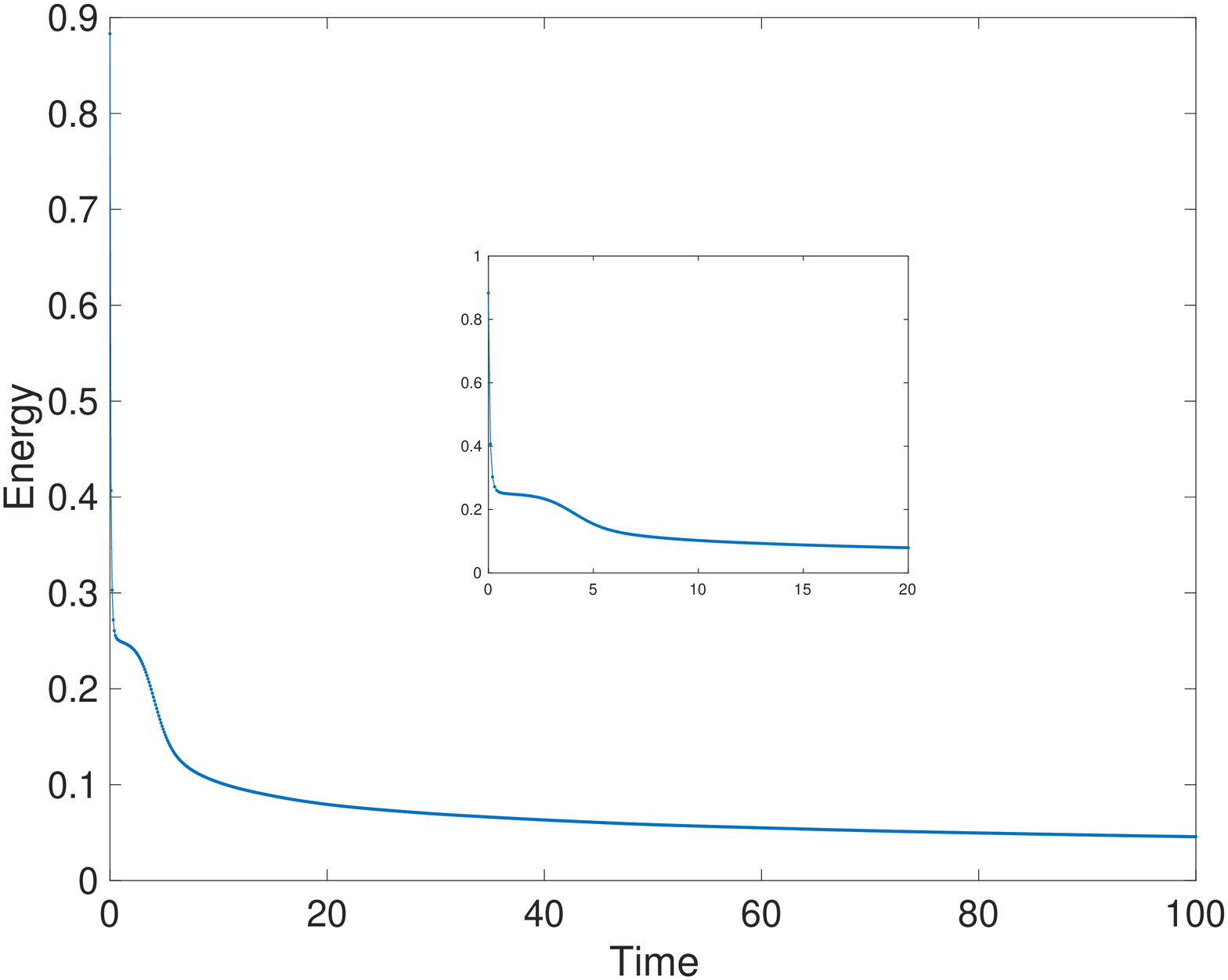}}}}
\caption{Evolutions of the mass, the supremum norm  and  the  energy   of the simulated solutions
for the mass-conserving Allen-Cahn equation with an initial  quasi-uniform state in 3D. }
\label{3DR2}
\end{figure*}

\subsection{The expanding  bubble test}

We use the ETDRK2 scheme to simulate the expansion process of the bubble in 3D, governed by the mass-conserving Allen-Cahn equation beginning with a discontinuous initial configuration
\begin{equation}
u_{0}=\left\{\begin{array}{ll}-0.5, & x^{2}+y^{2}+z^{2}<0.25^{2}, \\ 0.5, & \text { otherwise. }\end{array}\right.
\end{equation}
The temporal and spatial step size are set as $\tau= 0.1$ and $h = 1/256$.
\figurename~\ref{3DB1} presents the simulated process of the expanding bubble,
that is, the isosurface views of the numerical solutions at $t=1$, $10$, $15$, and $100$, respectively.
\figurename~\ref{3DB3} illustrates the evolutions of  the mass, the supremum norm, the energy and the radius of the bubble along the time.
It is again observed that the mass is conserved,  the discrete MBP is well preserved, and the energy decays monotonically.
The radius of the bubble increases monotonically and the steady state is reached (a bubble with  radius $r\approx 0.407$  as expected\cite{LJCF20}) after about $t=18$.

\begin{figure*}[!ht]
\vspace{-0.35cm}
\centerline{
\subfigure[$t=1$]{{\includegraphics[width=0.38\textwidth]{./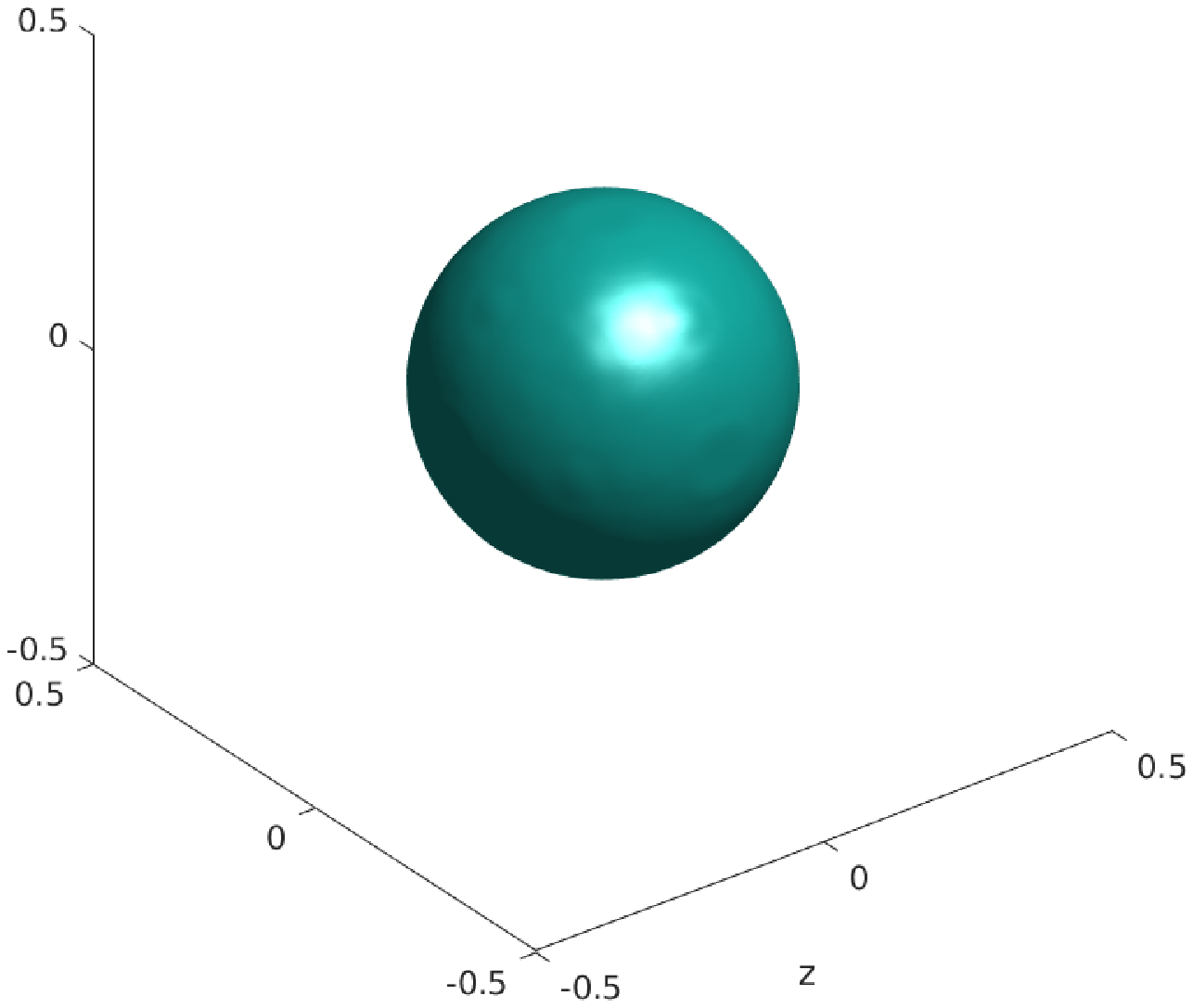}}}
\subfigure[$t=10$]{{\includegraphics[width=0.38\textwidth]{./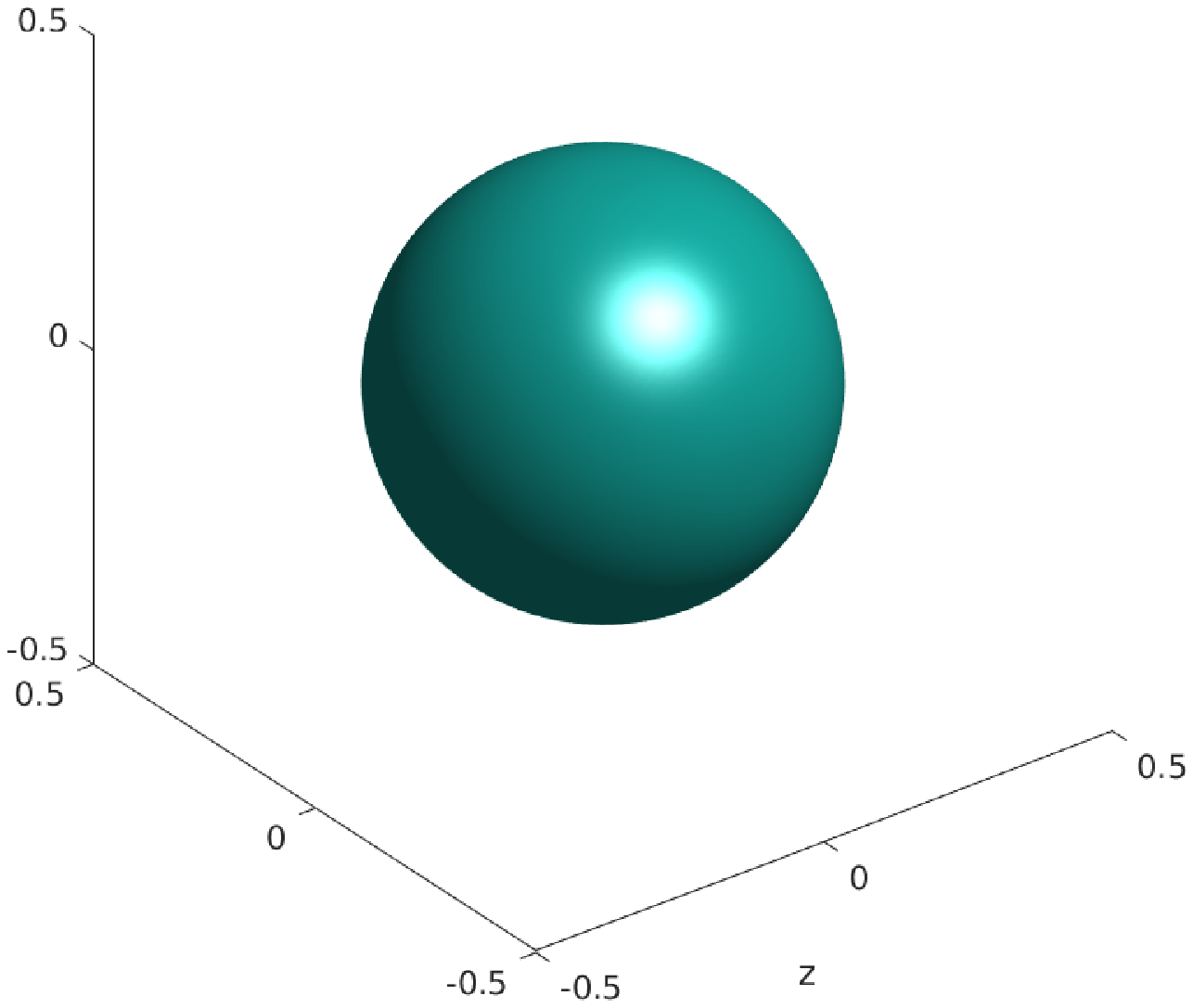}}}}
\vspace{-0.35cm}
\centerline{
\subfigure[$t=15$]{{\includegraphics[width=0.38\textwidth]{./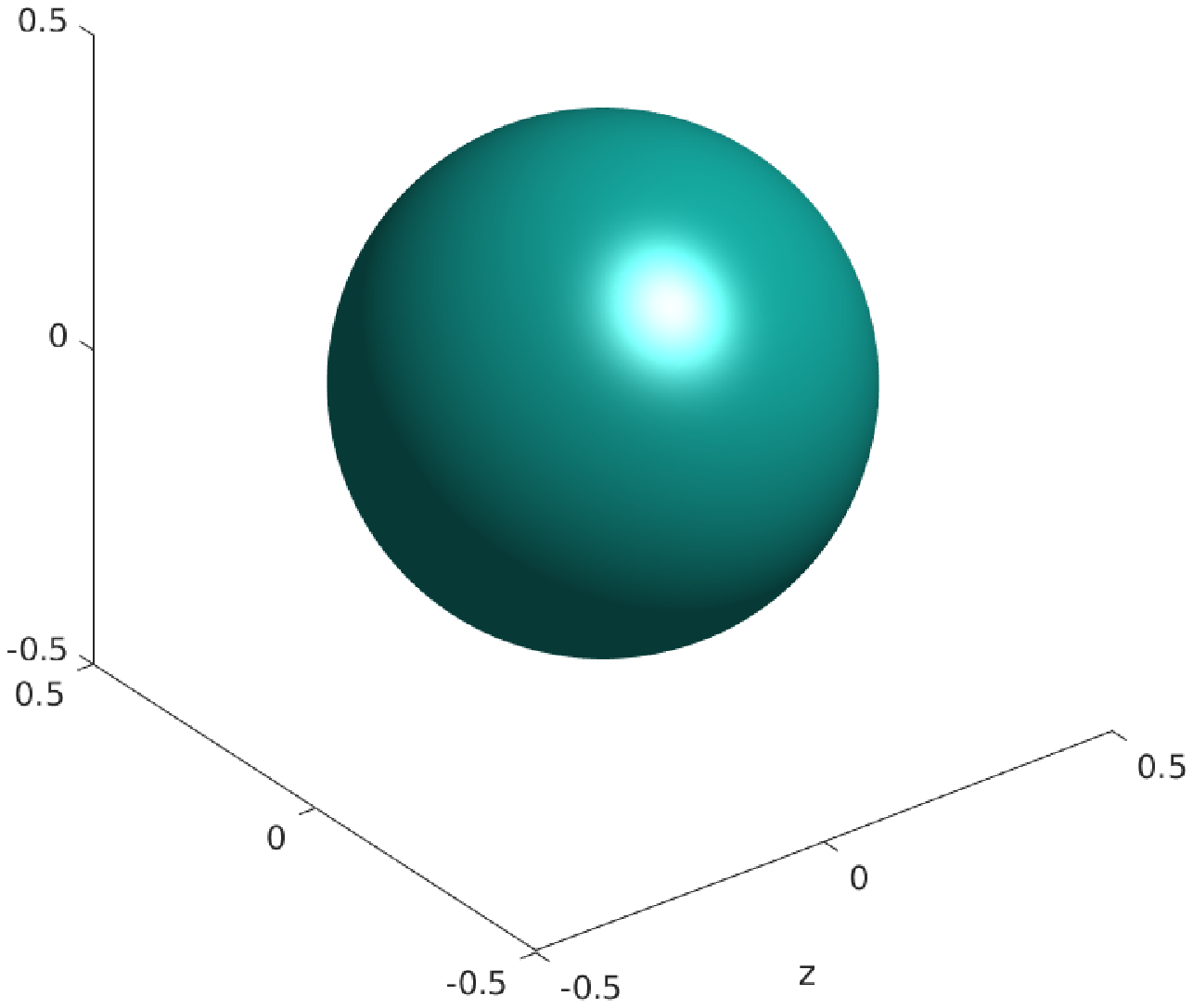}}}
\subfigure[$t=100$]{{\includegraphics[width=0.38\textwidth]{./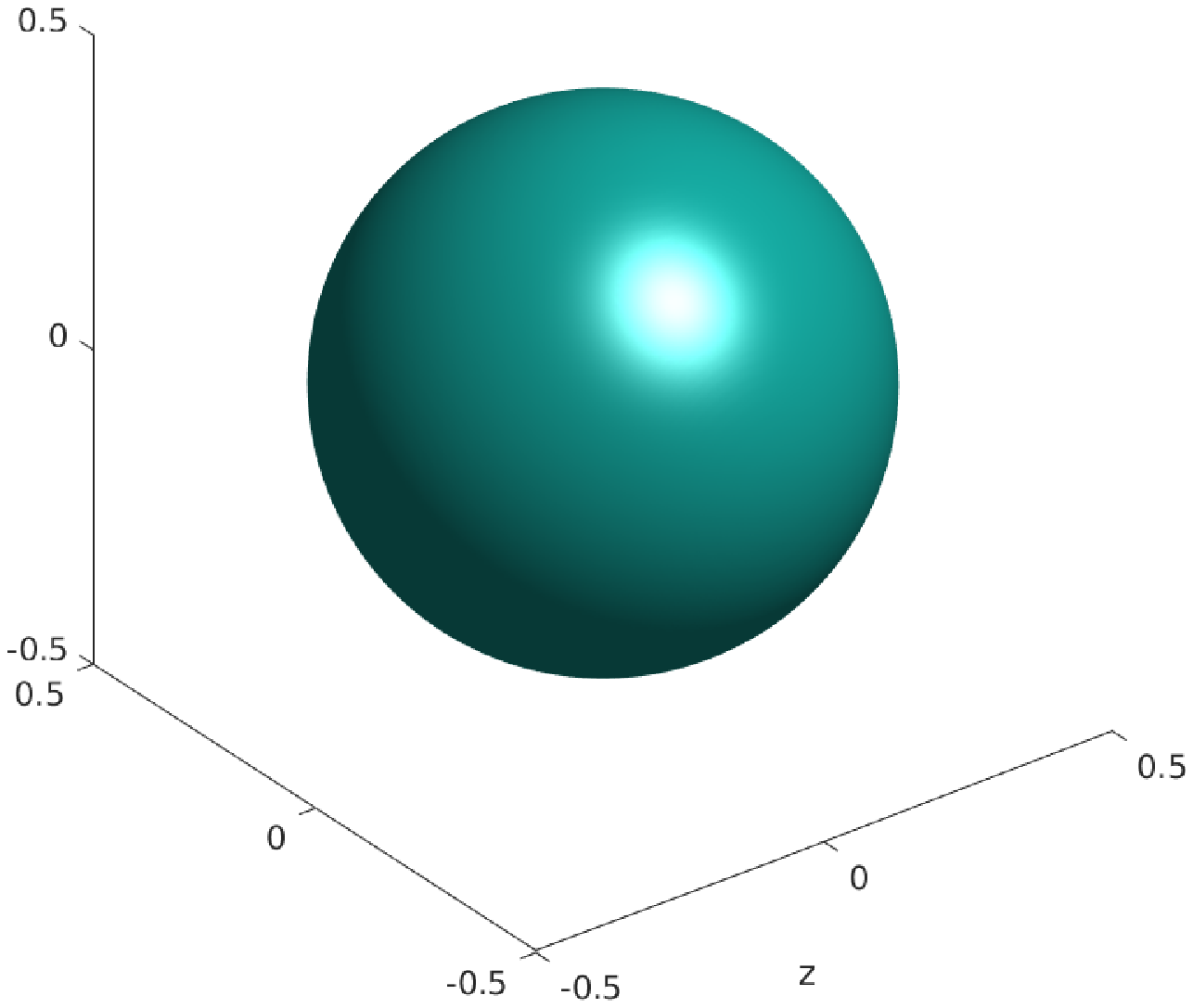}}}}
\caption{The simulated phase structures at $t=1$, $10$, $15$ and $100$ respectively
for the expanding bubble test in 3D.}\label{3DB1}
\end{figure*}

\begin{figure*}[!ht]
\centerline{
\subfigure{{\includegraphics[width=0.37\textwidth]{./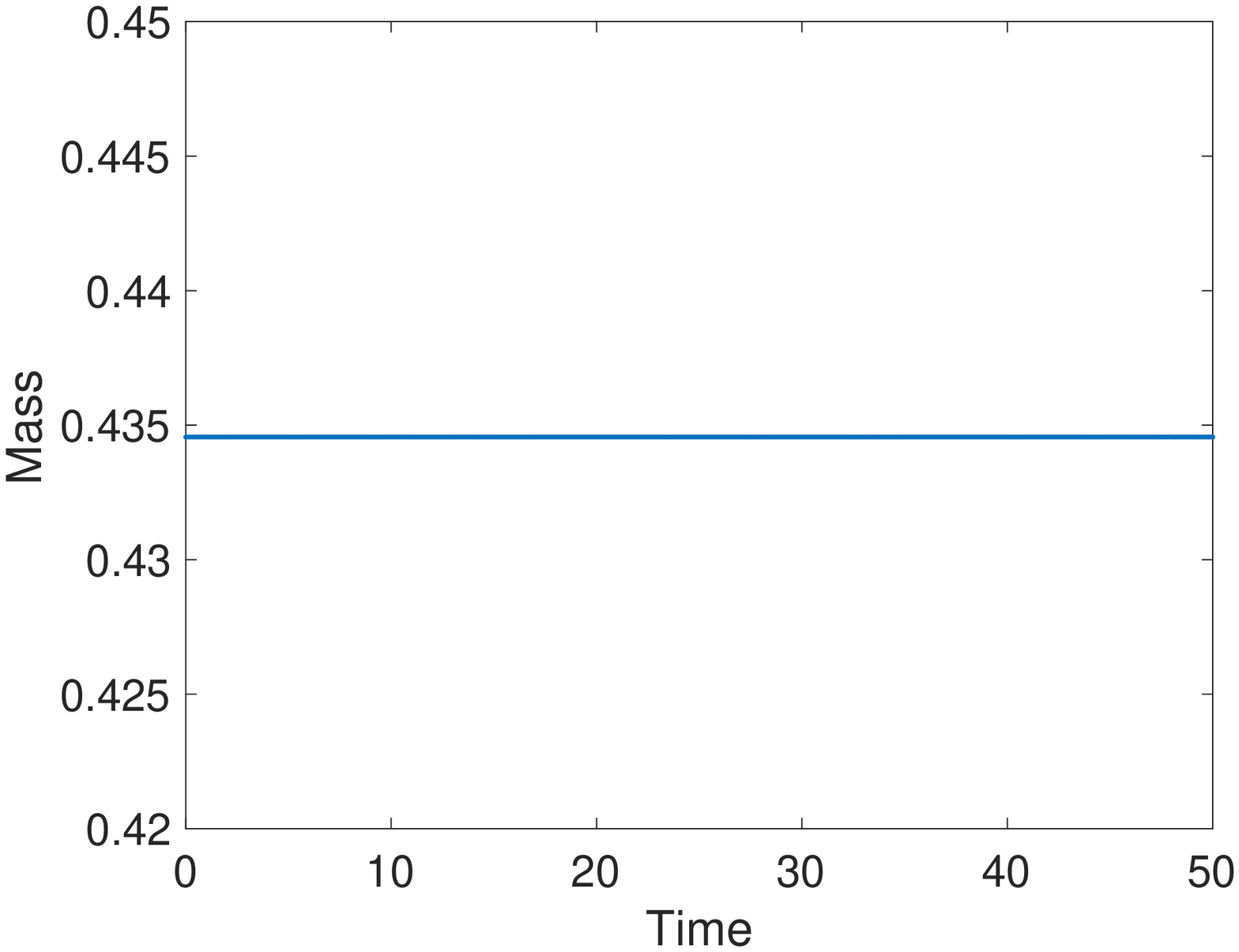}}}\hspace{-0.25cm}
\subfigure{{\includegraphics[width=0.37\textwidth]{./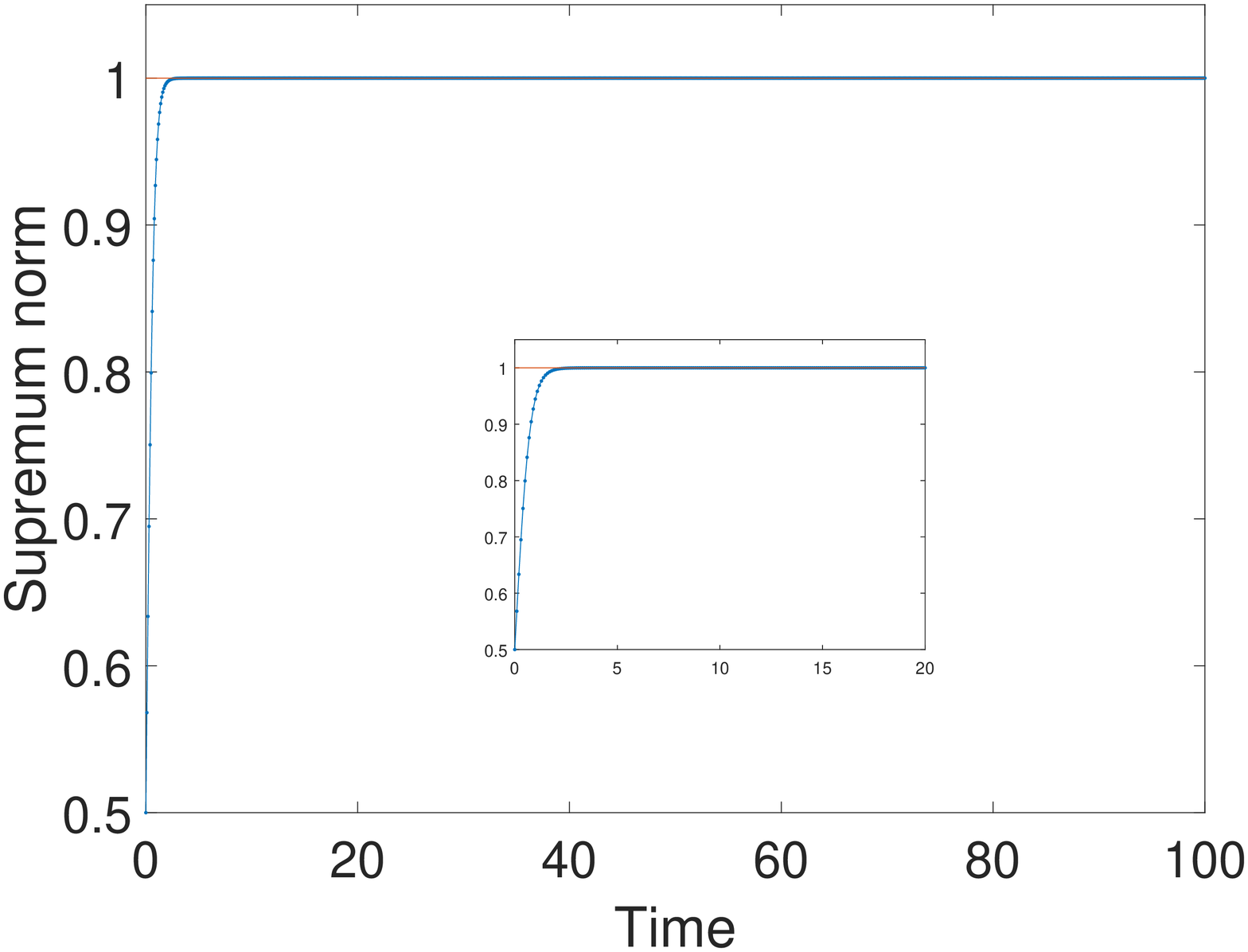}}}}
\centerline{
\subfigure{{\includegraphics[width=0.37\textwidth]{./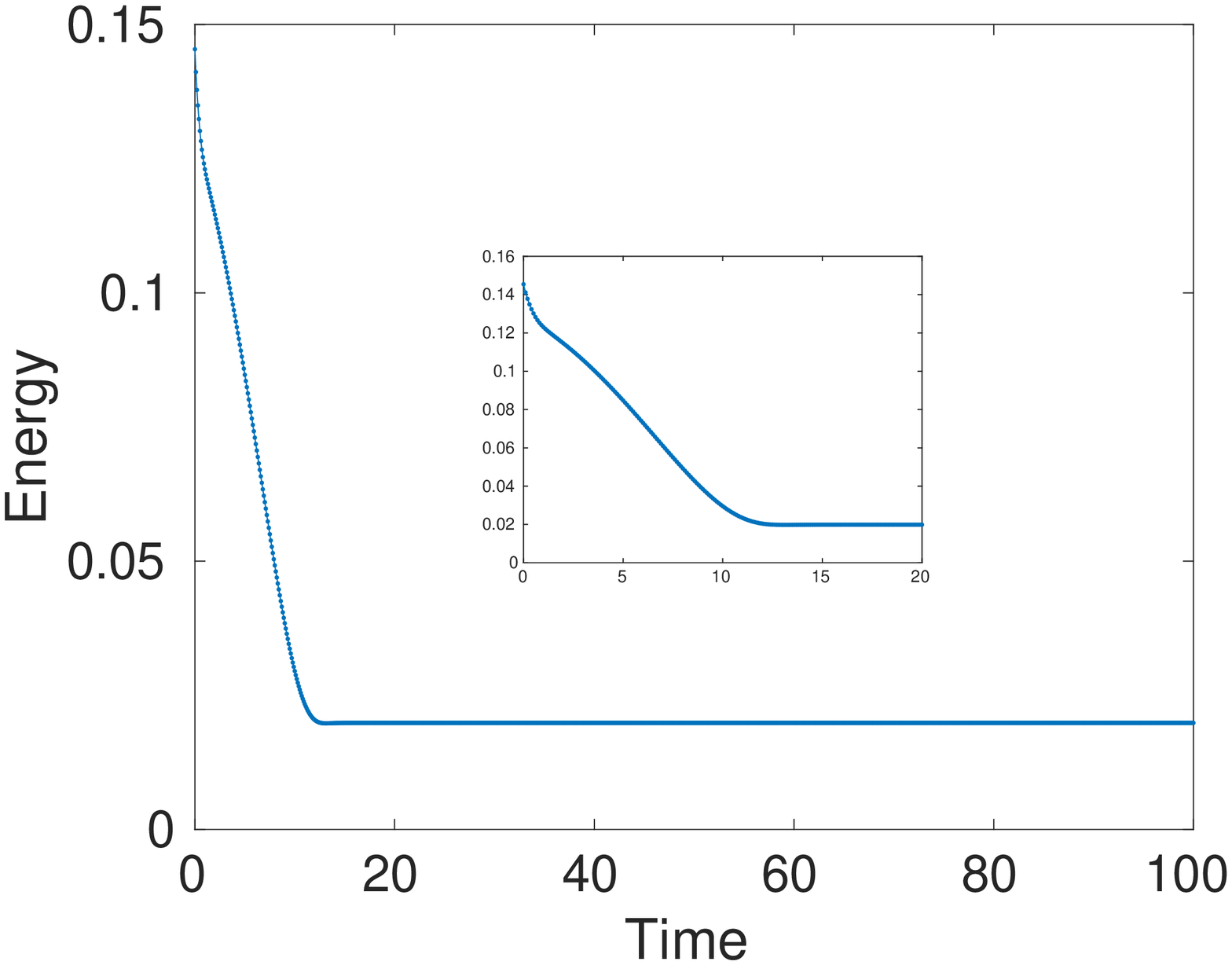}}}\hspace{-0.25cm}
\subfigure{{\includegraphics[width=0.37\textwidth]{./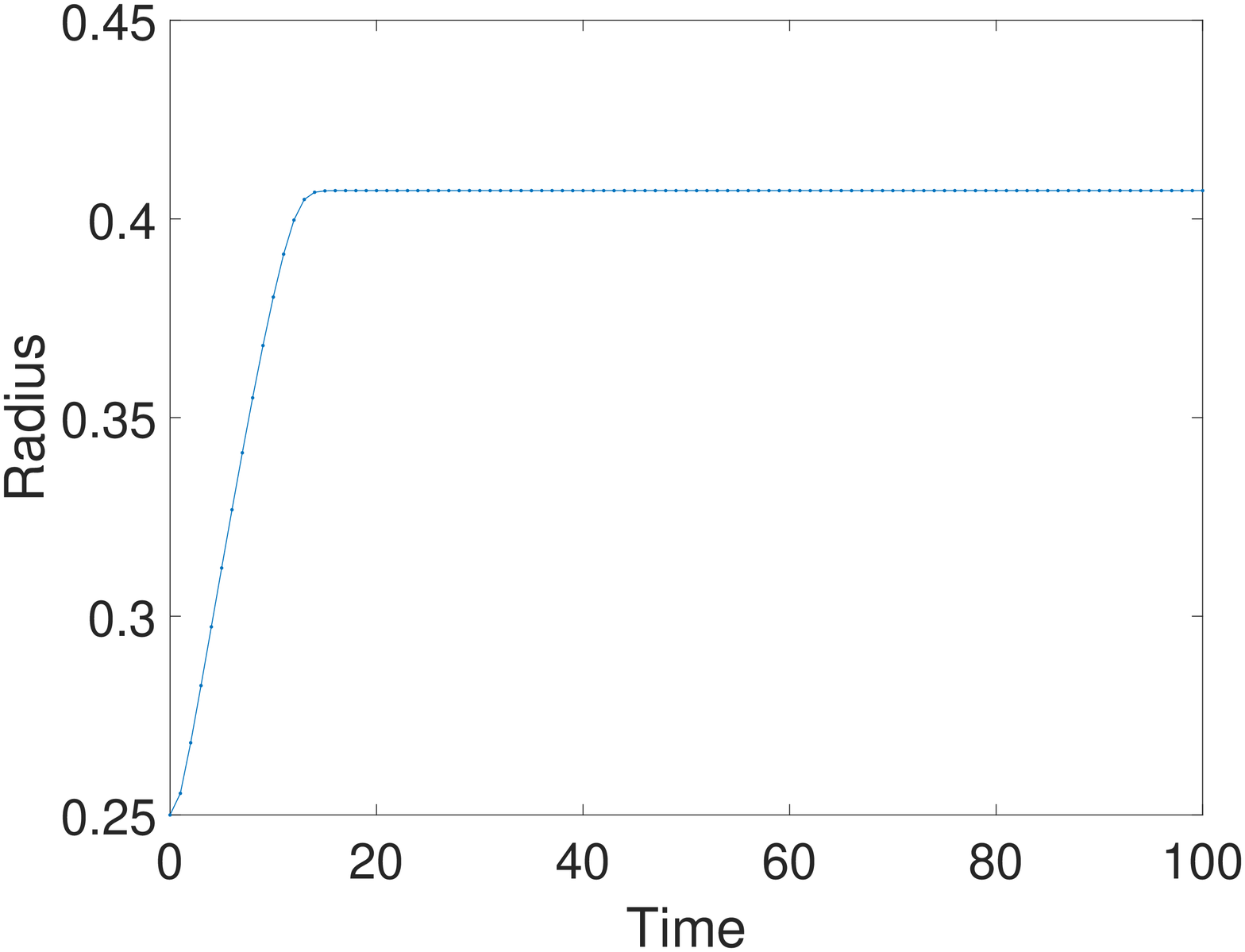}}}}
\caption{Evolutions of the mass,  the supremum norm, the energy  and the radius
of the simulated solutions for  the expanding bubble test in 3D. }
\label{3DB3}
\end{figure*}

\section{Conclusions}\label{sec5}

In this paper, we propose and analyze  first- and second- order linear schemes for solving the  mass-conserving Allen-Cahn equation with local and nonlocal effects (in the double-well potential case), which are based on the combination of the linear stabilizing technique and the exponential time differencing method. We prove that the proposed schemes are  unconditionally MBP-preserving and mass-conserved in the time-discrete sense. Error estimates of these schemes are also rigorously derived under some assumptions. It remains an open problem whether a more delicate analysis can relieve the extra assumption on the numerical solutions $\{u^n\}$ in Theorems \ref{thm-etd1-conv} and \ref{thm-etd2rk-conv} as discussed in Remark \ref{addcond}. In addition, it is worth mentioning that the Flory-Huggions potential is also  widely-used in the classic Allen-Cahn model and how to extend the current work to that case is  subject to future investigation as well.

\section*{Acknowledgments}

L. Ju's work is partially supported by U.S. National Science Foundation  under grant numbers DMS-1818438 and DMS-2109633.
J. Li's work is partially supported by National Natural Science Foundation of China under grant number 61962056.
X. Li's work is partially supported by National Natural Science Foundation of China under grant number 11801024.


\end{document}